\newtheorem{theorem}{Theorem}[section]
\newtheorem{lemma}[theorem]{Lemma}
\newtheorem{prop}[theorem]{Proposition}
\newtheorem{corollary}[theorem]{Corollary}
\newtheorem{lemmadef}[theorem]{Lemma-Definition}
\theoremstyle{definition}
\newtheorem{definition}[theorem]{Definition}
\newtheorem{rem}[theorem]{Remark}
\newcommand{\lcosmash}{\mathop{\raisebox{0.2ex}{\makebox[0.92em][l]{${\scriptstyle>\mathrel{\mkern-4mu}\blacktriangleleft}$}}}}
\newcommand{\rcosmash}{\mathop{\raisebox{0.2ex}{\makebox[0.92em][l]{${\scriptstyle\blacktriangleright\mathrel{\mkern-4mu}<}$}}}}
\newcommand{\ot}{\otimes}
\newcommand{\mr}{\mathrm}
\newcommand{\ms}{\mathsf}
\newcommand{\op}{\operatorname}
\newcommand{\G}{\mathbb{G}}
\renewcommand{\H}{\mathbb{H}}
\newcommand{\Lie}{\mathrm{Lie}}
\newcommand{\A}{\mathbb{A}}
\newcommand{\B}{\mathbb{B}}
\newcommand{\C}{\mathbb{C}}
\newcommand{\D}{\mathbb{D}}
\newcommand{\X}{\mathbb{X}}
\newcommand{\Y}{\mathbb{Y}}
\renewcommand{\S}{\operatorname{Spec}}
\renewcommand{\SS}{\operatorname{SSpec}}
\newcommand{\z}{\mathsf{Z}}
\newcommand{\zz}{\wedge(\mathsf{Z})}
\newcommand{\wG}{\mathsf{W}_{\mathbb{G}}}
\newcommand{\wwG}{\wedge(\mathsf{W}_{\mathbb{G}})}
\newcommand{\wH}{\mathsf{W}_{\mathbb{H}}}
\newcommand{\wwH}{\wedge(\mathsf{W}_{\mathbb{H}})}
\newcommand{\SM}{\mathsf{SMod}}
\newcommand{\os}{\overset}
\newcommand{\tto}{\longrightarrow}
\renewcommand{\O}{\mathcal{O}}
\newcommand{\gr}{\operatorname{gr}}
\numberwithin{equation}{section}
\title[Quotients $G/H$ in supersymmetry]
{Geometric construction of quotients $G/H$\\ in supersymmetry}
\author[A.~Masuoka]{Akira Masuoka}
\address{Akira Masuoka,
Institute of Mathematics, 
University of Tsukuba, 
Ibaraki 305-8571, Japan}
\email{akira@math.tsukuba.ac.jp}
\author[Y.~Takahashi]{Yuta Takahashi}
\address{Yuta Takahashi,
Graduate School of Pure and Applied Sciences, 
University of Tsukuba, 
Ibaraki 305-8571, Japan}
\email{y-takahashi@math.tsukuba.ac.jp}
\begin{document}

\begin{abstract}
It was proved by the first-named author and Zubkov \cite{MZ1} that given an affine algebraic 
supergroup $\G$ and a closed sub-supergroup $\H$ over an arbitrary field of characteristic $\ne 2$, 
the faisceau $\G \tilde{/} \H$ (in the fppf topology)
is a superscheme, and is, therefore, the quotient superscheme $\G/\H$, which has some desirable properties, in fact. 
We reprove this, by constructing directly the latter superscheme $\G/\H$. 
Our proof describes explicitly the structure sheaf of $\G/\H$, and reveals some new
geometric features of the quotient, that include one which was desired by Brundan \cite{B}, and
is shown in general, here for the first time. 
\end{abstract}

\maketitle

\noindent
{\sc Key Words:}
affine algebraic supergroup, 
Hopf superalgebra, 
superscheme,
faisceau

\medskip
\noindent
{\sc Mathematics Subject Classification (2000):}
14L15, 
14M30, 
16T05  

\section{Introduction}\label{sec:introduction}

Throughout in this paper we work over a fixed, arbitrary field $\Bbbk$ of characteristic $\ne 2$. 
Algebras, Hopf algebras, schemes and so on, together with their super-analogues, all are those over $\Bbbk$. 
The unadorned $\otimes$ means the tensor product $\otimes_{\Bbbk}$ over $\Bbbk$. 

\subsection{Quotients $G/H$}\label{intro1}
Given a group $G$ and a subgroup $H \subset G$, one has the set $G/H$ of cosets. 
This elementary fact which one learns at the first Algebra Course immediately turns into
a difficult question in the context of schemes, in which $G$ is an affine algebraic group scheme and $H$
is a closed subgroup scheme of $G$.  However, we already know the answer that there exists uniquely a scheme $G/H$
which fits in with the natural co-equalizer diagram $G \times H \rightrightarrows G \to G/H$ of schemes, and
which has desirable properties, such as being Noetherian;
see \cite[Part I, Sections 5.6--5.7]{J}, for example.  

It is easy to pose the same question in the generalized, super situation. But it was brought to 
our interest not long ago, by \cite{B} (2006).
In this article
J.~Brundan  listed up some properties that supersymmetric
quotients should have, and showed some general results, assuming the existence of the quotient. 
Moreover, he proved that there exists such a quotient $\G/\H$ with the properties 
for a special algebraic supergroup $\G=Q(n)$ and its parabolic sub-supergroups $\H=P_{\gamma}$, and applied his general
results to $Q(n)\supset P_{\gamma}$, 
producing beautiful results on representations of $Q(n)$. 
Later, the first named author and A.~Zubkov \cite{MZ1} (2011) proved the existence of quotients in general,
showing their properties which, however, do not include one from Brundan's list.
In this paper we reconstruct the quotient more directly,
describing its structure sheaf explicitly, and show its properties which include all (reasonable ones) in Brundan's list; the
list will be examined finally in Section \ref{subsec:Brundan}. 
Consequently, Brundan's general results turn applicable to a wider class of affine algebraic supergroups.

\subsection{Supersymmetry}\label{intro2}
The word ``super" is a synonym of ``graded by the order-$2$-group $\mathbb{Z}/(2)=\{\, 0, 1\, \}$"; the $0$ (resp., the $1$)
in $\mathbb{Z}/(2)$ 
is called \emph{even} (resp., \emph{odd}).
A \emph{super-vector space} is thus a vector space $V$ which is $\mathbb{Z}/(2)$-graded so as $V=V_0\oplus V_1$; 
$V$ is said to be \emph{purely even} (resp., \emph{purely odd}) if $V=V_0$ (resp., if $V=V_1$). 
The super-vector spaces $V, W,\dots$ all together form a symmetric tensor category with respect to the natural tensor product
$V \ot W$, the unit object $\Bbbk$ and the supersymmetry
\[ 
c=c_{V,W} : V \ot W \os{\simeq}{\tto} W \ot V,\ \, c(v\ot w)=(-1)^{|v||w|}w \ot v, 
\]
where $v$ and $w$ are supposed to be homogeneous elements of degree $|v|$, $|w|$, respectively. 
Ordinary objects, such as algebra, commutative algebra, Hopf algebra, which are defined in the 
symmetric tensor category of vector spaces, equipped with the trivial symmetry $V\ot W \os{\simeq}{\tto} W \ot V$,
$v\ot w \mapsto w \ot v$, are generalized by \emph{super-objects} defined in the symmetric tensor category
of super-vector spaces; the objects are called with ``super" attached, so as superalgebra, super-commutative
superalgebra, Hopf superalgebra. Ordinary objects are precisely purely even super-objects. 

In what follows, superalgebras $\A$ (and Hopf superalgebras as well) are assumed to be super-commutative, unless otherwise stated;
the assumption means that $\A_0$ is a central subalgebra of $\A$, and we have $ab=-ba$ for all $a, b \in \A_1$. 
Accordingly, (Hopf) algebras are assumed to be commutative. 

\subsection{Geometrical vs.~functorial viewpoints}\label{intro3}
The article \cite{MZ1} showed that the circumstance around schemes is directly generalized to the super situation, as follows. 
The notion of superschemes is defined in two ways, 
from geometrical viewpoint and from functorial viewpoint; the notion from the latter will be called
a \emph{functorial superscheme} in this paper. 
Roughly speaking, a \emph{superscheme} is a topological space, equipped with a structure sheaf of superalgebras,
which is covered by some affine open sub-superschemes; an affine superscheme, $\S \A$, is uniquely given 
by a superalgebra, say $\A$, so that the underlying topological space is the
the spectrum $\S(\A_0)$ of the algebra $\A_0$, and the superalgebra $\O_{\S \A}(\S(\A_0))$ of global sections
is $\A$.
A \emph{functorial superscheme} is a set-valued functor
defined on the category of superalgebras, which is, roughly speaking, the union of some affine open sub-functors; a functorial affine
superscheme, $\op{Sp} \A$, is a representable functor, which is thus uniquely represented by a superalgebra, say $\A$. 
The Comparison Theorem \cite[Theorem 5.14]{MZ1} states that $\S \A \mapsto \op{Sp} \A$ naturally extends to an
equivalence from the category of superschemes to the category of functorial superschemes. 
An advantage of the functorial viewpoint is in that
the latter category is included in the tractable category of faisceaux; a \emph{faisceau} is a functor which behaves
like a sheaf with respect to the so-called fppf-coverings of superalgebras. 

Group-objects in the category of (functorial) superschemes are called {supergroup schemes}. 
But we treat only affine supergroup schemes in this paper.
In addition, when we discuss affine (super)group schemes (not affine (super)schemes), we 
omit the word ``scheme", and say \emph{affine (super)groups}, following the widely known custom of Jantzen \cite{J},

\subsection{Main result and consequences}\label{intro4}
Let $\G=\S \C$ be an affine algebraic supergroup, and
$\H =\S \D$ a closed sub-supergroup.
Thus, $\C$ is a finitely generated
Hopf superalgebra, and $\D$ is a quotient Hopf superalgebra of $\C$. 
(Warning: the symbol $\C$ is thus used to denote Hopf superalgebras, and it will never be used  to denote
the field of complex numbers in this paper.)
It is easy to construct the quotient $\G\tilde{/}\H$ in the category of faisceaux. One principle 
is that if the faisceau $\G\tilde{/}\H$ happens to be a functorial superscheme, we have
the quotient $\G/\H$ in the category of superschemes by the Comparison Theorem. 
In fact, the article \cite{MZ1} referred to in Section \ref{intro1} has proved that the assumption
is satisfied, to obtain the conclusion. But we only depend on the principle in the restricted situation that
the quotient is affine.
Being more on the geometrical side, we construct the superscheme $\G/\H$ directly, as follows.

One sees that 
$\G$ (resp., $\H$) includes an affine algebraic group $G =\S C$ (resp., $H=\S D$) as the largest 
purely even closed sub-supergroup. We remark that $\G$ and $G$ (resp., $\H$ and $H$) has the same underlying
topological space, so that $|G|=|\G|\supset |\H|=|H|$, whence $G \supset H$. 
Let $\pi : G \to G/H$ be the quotient morphism; to this, known results can apply. 
Choose arbitrarily an affine open subset $\emptyset \ne U \subset |G/H|$. Then $\pi^{-1}(U)$ 
is an $H$-stable affine open subscheme of $G$ such that $\pi^{-1}(U)/H = U$.  
Note that $\pi^{-1}(U)$ is an open subset of $\G$, as well. The key of ours is
to construct an $\H$-equivariant embedding of some right $\H$-equivariant affine superscheme onto $\pi^{-1}(U)$ in $\G$. 
Such an embedding has the form $\S(\omega)$, where
$\omega : \C \to \A$ is a map of right $\D$-super-comodule superalgebras; the question is, therefore,
to find an appropriate right $\D$-super-comodule superalgebra $\A$ together with $\omega$ such as above. 
Indeed, Hopf-algebraic techniques enable us to find out very useful ones; 
see Proposition \ref{prop:open_embed} and Corollary \ref{cor:quotient_local}. 
The result is that the $\pi^{-1}(U)$ in $\G$
is an $\H$-stable affine open sub-superscheme of $\G$, such that $\pi^{-1}(U)/\H$ exists, and is an affine superscheme.
Our main theorem, Theorem \ref{mainthm}, shows that the thus obtained affine superschemes, when $U$ ranges over all affine open subsets of $|G/H|$,
are uniquely glued into a superscheme with the underlying topological space $|G/H|$, and the resulting superscheme
is indeed the quotient $\G/\H$: the underlying topological space $|\G/\H|$ is thus the same as $|G/H|$. 
The proof will give a new description of the structure sheaf $\O_{\G/\H}$ (Remark \ref{rem:about_mainthm}): $\O_{\G/\H}$ is 
\emph{locally} isomorphic to the sheaf
\[ \wedge_{\O_{G/H}}(\pi_*\O_G\, \square_D\, \z),  \]
where $\pi_*\O_G\, \square_D\, \z$ is a locally free $\O_{G/H}$-module sheaf. It is this property that was failed to be shown by \cite{MZ1};
see Section \ref{intro1}. 

It does happen that the sheaves $\O_{\G/\H}$ and $\wedge_{\O_{G/H}}(\pi_*\O_G\, \square_D\, \z)$ are not \emph{globally} isomorphic; see
Remark \ref{rem:known_for_split}.
On the other hand, 
Proposition \ref{prop:split} gives some sufficient conditions for the two sheaves to be globally isomorphic. 
The new description above shows that $\G/\H$ has desirable properties; see Proposition \ref{prop:affinity2}. 
They include the remarkable one: 
an open subset of $|\G/\H|\, (=|G/H|)$
is affine in $\G/\H$ if and only if it is affine in $G/H$.  

The results over-viewed above are contained in Section \ref{sec:G/H}. 
The preceding two sections are devoted to preliminaries.
Section \ref{sec:preliminaries1} summarizes basic facts on super-(co)algebras and superschemes; they 
include the Comparison Theorem, Theorem \ref{thm:comparison}, referred to above.
Section \ref{sec:preliminaries2} is devoted mostly to reproducing
necessary, known results on affine supergroups and Hopf superalgebras.


\section{Superalgebras and superschemes}\label{sec:preliminaries1}

This preliminary section summarizes basic facts on super-(co)algebras and on superschemes 
in Sections \ref{subsec:super_vs_non-super}--\ref{subsec:Noetherian_superalgebra}
and in Sections \ref{subsec:superscheme}--\ref{subsec:functor}, respectively.


\subsection{Super vs. non-super situations}\label{subsec:super_vs_non-super}
Super-(co)algebras are regarded as ordinary (co)algebras, with the 
$\mathbb{Z}/(2)$-grading forgotten. 
A right, say, supermodule $M$ over a superalgebra $\B$ is
(faithfully) flat as an ordinary right $\B$-module if and only if the functor $M\otimes_{\B}$ defined
on the category of left $\B$-supermodules is (faithfully) exact
\cite[Lemma 5.1 (1)]{M1}.
Similarly, a (left or right) super-comodule over a super-coalgebra $\C$ is injective 
(or equivalently, coflat) as an ordinary $\C$-comodule if and only if it is so in the category of
$\C$-super-comodules. 
If the equivalent conditions are satisfied we say simply
that the object in question is (\emph{faithfully}) \emph{flat} or \emph{injective}. 

Recall that given a left $\C$-super-comodule $L=(L,\ \lambda_L : L \to \C \ot L)$ and a right $\C$-super-comodule 
$M=(M,\ \rho_M : M \to M\ot \C)$, the 
\emph{co-tensor product} $M\, \square_{\C}\, L$ is the super-vector space defined as the equalizer
\[ M\, \square_{\C}\, L\to M\ot L \rightrightarrows M\ot \C \ot L \]
of $\op{id}_M \ot \lambda_L$ and $\rho_M \ot \op{id}_L$.  
The functor $M \square_{\C}$ (resp., $\square_{\C}\, L$)
defined on the category of left (resp., right) $\C$-(super-)comodules is left exact. If it is exact,
then $M$ (resp., $L$) is said to be \emph{coflat}. 
The condition is equivalent to the $\C$-(super-)comodule being injective, as noted above; see \cite[Proposition A.2.1]{T1}.


\subsection{Superalgebras}\label{subsec:superalgebra}
Recall that all superalgebras are assumed to be super-commutative. 
Given a superalgebra $\B$, left $\B$-supermodules and right $\B$-supermodules are identified by a canonical
category-isomorphism \cite[Lemma 5.2 (2)]{M1}.
It follows that a $\B$-superalgebra $\A$ is faithfully flat as a left $\B$-(super)module 
if and only if it is so as a right $\B$-(super)module \cite[Lemma 5.3 (2)]{M1}. 
In this case we say that $\A$ is \emph{faithfully flat} over $\B$, or $\B \to \A$ is \emph{faithfully flat}. 
We say that $\A$ is \emph{fppf} 
(fid\`{e}lement plat et de pr\'{e}sentation finie) over $\B$ if it is
faithfully flat and finitely presented. Recall that $\A$ is said to be \emph{finitely presented} over $\B$
if it has the form $\B[\underline{\mathsf{X}}|\underline{\mathsf{Y}}]/I$, where  
$\B[\underline{\mathsf{X}}|\underline{\mathsf{Y}}]=
\B[\mathsf{X}_1,\dots,\mathsf{X}_r|\mathsf{Y}_1,\dots,\mathsf{Y}_s]$ is a polynomial superalgebra 
in finitely many even variables $\underline{\mathsf{X}}=(\mathsf{X}_i)_i$ and odd variables 
$\underline{\mathsf{Y}}=(\mathsf{Y}_i)_i$, 
and $I$ is a finitely generated super-ideal. 


\subsection{Graded superalgebras}\label{subsec:graded_superalgebra}
Let $\A=\A_0\oplus \A_1$ be a superalgebra. The super-ideal $I_{\A}=(\A_1)$ 
generated by the odd component $\A_1$ is the smallest super-deal such that the
quotient
\begin{equation}\label{eq:assoc_algebra}
A := \A/I_{\A} \, (=\A_0/\A_1^2) 
\end{equation}
is an ordinary (commutative) algebra. This last algebra is said to be \emph{associated with}
the original superalgebra, denoted by the corresponding normal capital letter. 
The descending chain $\A \supset I_{\A}\supset I_{\A}^2 \supset \dots$ of super-ideals constructs 
the \emph{graded superalgebra} 
\begin{equation}\label{eq:grA}
\gr \A:= \bigoplus_{n\ge 0}I_{\A}^n/I_{\A}^{n+1}
\end{equation}
\emph{associated with} $\A$.
By a \emph{graded superalgebra} we mean an algebra graded by $\mathbb{N}=\{ 0,1,2,\dots\}$ which,
regarded as $\mathbb{Z}/(2)$-graded by mod-$2$ reduction, is a super-commutative superalgebra. 
Note that the $A$-module $(\gr \A)(1)=I_\A/I_\A^2\, (=\A_1/\A_1^3)$ is purely odd, and 
the embedding $I_\A/I_\A^2 \hookrightarrow \gr \A$ induces a surjection
of graded superalgebras 
\[ \wedge_A(I_\A/I_\A^2) \to \gr \A \]
from the exterior $A$-algebra on the $A$-module $I_\A/I_\A^2$. 


\subsection{Noetherian superalgebras}\label{subsec:Noetherian_superalgebra}
Retain the notation as above.
We say that $\A$ is \emph{Noetherian} if its super-ideals satisfy the ACC.
The condition is easily seen to be equivalent to each of the following:
\begin{itemize}
\item[(i)]
The commutative algebra $\A_0$ is Noetherian, and the $\A_0$-algebra $\A$ is generated by finitely many odd elements; 
\item[(ii)] 
$\A_0$ is Noetherian, and the $\A_0$-module $\A_1$ is finitely generated;
\item[(iii)]
$A$ is Noetherian, and the $A$-module $I_\A/I_\A^2$ is finitely generated;
\item[(iv)]
The superalgebra $\wedge_A(I_\A/I_\A^2)$ is Noetherian; 
\item[(v)]
The superalgebra $\gr \A$ is Noetherian. 
\end{itemize}
See \cite[Section A.1]{MZ2}. Given a Noetherian superalgebra $\B$, a finitely generated $\B$-superalgebra
is finitely presented over $\B$, and is Noetherian. 

Let $\A$ and $\B$ be Noetherian superalgebras. Then $\gr \A$ and $\gr \B$ are finitely graded
in the sense that $(\gr \A)(n)=0=(\gr \B)(n)$ for $n \gg 0$. It follows that
a superalgebra map $f : \A \to \B$ is surjective/injective if and only if the associated
graded superalgebra map $\gr f :\gr \A \to \gr \B$ is so. 


\subsection{Superschemes}\label{subsec:superscheme}
A \emph{super-ringed space} (over $\Bbbk$) is a topological space equipped with a
sheaf of superalgebras (over $\Bbbk$) on it. It is said to be \emph{local}
if the stalk at every point is local; see below. 

Let $\A=\A_0\oplus \A_1$ be a superalgebra with associated algebra $A=\A/(\A_1)$. 
The affine superscheme $\S \A$ associated with $\A$ is a local-super-ringed space. 
Its underlying topological space is the spectrum $\S(\A_0)$ of the algebra $\A_0$; 
it is naturally identified with
the spectrum $\S A$ of $A$, since $A=\A_0/\A_1^2$ and $\A_1^2\subset \sqrt{0}$.
Note that every super-ideal $\mathbb{P}$ of $\A$ such that
$\A/\mathbb{P}$ is an integral domain uniquely has the form $\mathbb{P}=P\oplus \A_1$
with $P\in \S(\A_0)$. Similarly, every proper super-ideal of $\A$ that is maximal with respect to
inclusion uniquely has the form $P \oplus \A_1$, where $P\subset \A_0$ is a maximal ideal. 
We say that $\A$ is \emph{local} if it has a unique maximal super-ideal, or equivalently, if $\A_0$ is local.
The localization $S^{-1}\A$ by a multiplicative set $S \subset \A_0$ is 
the base extension $\A\otimes_{\A_0}S^{-1}\A_0$ of the $\A_0$-algebra $\A$ along the localization
$\A_0 \to S^{-1}\A_0$. If $S=\A_0\setminus P$ with $P\in \S(\A_0)$ (resp., if $S=\{1,x,x^2,\dots \}$ with
$x \in \A_0$), then $S^{-1}\A$ is denoted by $\A_P$ (resp., $\A_x$), as usual. 
Note that $\A_P$ is local. 
The structure sheaf $\O_{\S \A}$ of $\S \A$ is the unique 
sheaf of superalgebras that assigns $\A_x$ to every principal open set $D(x)=\{ P\mid x \notin P \}$. 
The stalk $\O_{\S \A,P}$ at $P$ is $\A_P$. 
In \cite{MZ1}, $\S \A$ is alternatively denoted $\SS A$. 

A \emph{superscheme} (over $\Bbbk$) is a local-super-ringed space (over $\Bbbk$) 
which is locally isomorphic to some affine superscheme. 
The superschemes form a full subcategory of the category of local-super-ringed spaces.
A morphism $f : \X \to \Y$ of the latter category is required to be such that the
induced superalgebra map $f^*_P : \O_{\Y,f(P)}\to \O_{\X,P}$ between the stalks
is \emph{local}, that is, $f^*_P$ sends the maximal super-ideal of $\O_{\Y,f(P)}$ into that of $\O_{\X,P}$.  
Basic notions for schemes and their morphisms, such as
\emph{algebraic/Noetherian scheme}, \emph{open/closed embedding}, \emph{affine/faithfully flat/finitely-presented morphism},
and relevant basic results are generalized to our super context in the obvious manner. 

A superscheme $\X$ is said to be \emph{smooth} at point $P$ of the underlying topological space $|\X|$, if
the stalk $\O_{\X,P}$ at $P$ is smooth as a superalgebra; this means that a superalgebra 
surjection onto $\O_{\X,P}$ splits whenever its kernel is a nilpotent super-ideal. 
A superscheme is said to be \emph{smooth} if it is smooth at every point. 
Theorem A.2 of \cite{MZ2} gives some characterizations for a Noetherian affine superscheme to be smooth.


\subsection{The associated graded superscheme}\label{subsec:graded_superscheme}
Let $\X$ be a superscheme with structure sheaf $\O_{\X}$. 
Given a non-empty affine open subset $U \subset |\X|$, we have the affine superscheme
\[ \Y_{U}:=\S(\gr \O_{\X}(U)) \]
given by the graded superalgebra $\gr \O_{\X}(U)$ associated with the superalgebra $\O_{\X}(U)$. 
The underlying topological space $|\Y_U|$ is naturally identified with that space of $\S(\O_\X(U)/(\O_\X(U)_1))$,
and hence with $U$. 

\begin{lemmadef}\label{lemdef:graded_superscheme}
The affine superschemes $\Y_U$, where $U$ ranges over non-empty affine open subsets of $|\X|$, are uniquely glued into
a superscheme with the underlying topological space $|\X|$. 

\emph{We denote the resulting superscheme by $\gr \X$, and call it} the graded superscheme associated with $\X$. 
\end{lemmadef}
\begin{proof}
Let $U \supset U'$ be affine open in $\X$. Suppose $U=\S \A$, $U'=\S \A'$, and that
$U \supset U'$ arise from a superalgebra map $i : \A \to \A'$. Let $A=\A/(\A_1)$ and $A'=\A'/(\A'_1)$ be the associated
algebras. 
Choose $P \in \S(\A'_0)$ arbitrarily, and 
set $Q=i^{-1}(P) \, (\in \S(\A_0))$. Then the map of stalks $i_P: \A_Q \to \A'_P$ at $P$ is an isomorphism. 
Note that the operation $\gr$ commutes with localization, $\gr(i_P)= (\gr i)_P$, and so
\[ (\gr i)_P: \gr(\A)_Q \to \gr(\A')_P \]
is an isomorphism. Here we may suppose that $P \in \S A'$, $Q \in \S A$, and that the relevant localizations
$\gr(\A')_P$ and $\gr(\A)_Q$ are by those, since the $\A'_0$-algebra $\gr(\A')_P$ and $\A_0$-algebra
$\gr(\A)_Q$ are, in fact, an $A'$-algebra and an $A$-algebra, respectively. The result remains unchanged
if we replace $P$ and $Q$ with the corresponding primes in $\S(\gr(\A')_0)$ and in $\S(\gr(\A)_0)$, respectively. 
Indeed, the localizations are then unchanged since one may ignore the deference by nilpotent elements for localizing elements. 

To prove the assertion it suffices to prove that the structure sheaf $\O_{\Y_U}$ of $\Y_U$, pull-backed to $U'$, coincides with
$\O_{\Y_{U'}}$. But this follows from the result just proven. 
\end{proof}

The structure sheaf of $\gr \X$ is a sheaf of graded superalgebras. 
One sees that a superscheme $\X$ is Noetherian if and only if $\gr \X$ is. 
The argument of the last proof, concentrating in degree zero, shows the following. 

\begin{lemmadef}\label{lemdef:associated_scheme}
The affine schemes
\[ \S \big(\O_\X(U)/(\O_\X(U)_1)\big) \]
where $U$ ranges over non-empty affine open subsets of $|\X|$, are uniquely glued into
a scheme with the underlying topological space $|\X|$. 

\emph{We call the resulting scheme} the scheme associated with $\X$. 
\end{lemmadef}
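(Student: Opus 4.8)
The plan is to reuse the proof of Lemma-Definition \ref{lemdef:graded_superscheme}, concentrating in degree zero. Observe first that the associated algebra is nothing but the degree-zero component of the associated graded superalgebra:
\[ A = \A/(\A_1) = \A/I_\A = (\gr \A)(0). \]
Since the underlying space of $\S A$ is identified with $\S(\A_0)$ (because $\A_1^2 \subset \sqrt 0$; see Section \ref{subsec:superscheme}), each affine scheme $\S\big(\O_\X(U)/(\O_\X(U)_1)\big)$ has underlying topological space $U$, so that the glued object, once its existence is established, carries the underlying space $|\X|$, as desired.

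For the gluing, let $U \supset U'$ be affine open in $\X$, say $U = \S \A$, $U' = \S \A'$, the inclusion arising from a superalgebra map $i : \A \to \A'$; write $A = \A/(\A_1)$, $A' = \A'/(\A'_1)$, and let $\bar i : A \to A'$ be the induced map. The proof of Lemma-Definition \ref{lemdef:graded_superscheme} shows that, for $P \in \S A'$ and the corresponding $Q \in \S A$, the stalk map $(\gr i)_P : \gr(\A)_Q \to \gr(\A')_P$ is an isomorphism. The core step is then to extract degree zero: both the localization at a prime of the even part and the functor $\gr$ respect the $\mathbb{N}$-grading, and the degree-zero component of $\gr(\A)_Q$ is $A_Q$, since localization, being exact, commutes with the quotient $\A \mapsto \A/I_\A$. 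Hence the degree-zero component of $(\gr i)_P$ is precisely $(\bar i)_P : A_Q \to A'_P$, which is therefore an isomorphism.

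As in the previous proof, this isomorphism of stalks at every point forces the structure sheaf of $\S A$, pulled back to the open subset $U'$, to coincide with that of $\S A'$; this is exactly the compatibility needed to glue the affine schemes $\S\big(\O_\X(U)/(\O_\X(U)_1)\big)$ along overlaps into a single scheme with underlying topological space $|\X|$, the gluing being unique by the standard sheaf-gluing construction. I anticipate no genuine obstacle here: the only points requiring care are the commutation of the associated-algebra functor with localization---immediate from exactness of localization---and the identification of the underlying space of $\S A$ with $U$, both of which are already in hand.
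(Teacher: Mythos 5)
Your proposal is correct and follows exactly the paper's own route: the paper proves this statement by the one-line remark that "the argument of the last proof, concentrating in degree zero, shows the following," and your argument is precisely that—extracting the degree-zero component $A_Q \to A'_P$ of the stalk isomorphism $(\gr i)_P$ from the proof of Lemma-Definition \ref{lemdef:graded_superscheme}, using that localization commutes with $\gr$ and with the quotient $\A \mapsto \A/I_\A$.
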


Let $\X$ be a superscheme. 
We say that $\X$ is \emph{split}, if there exists a scheme $X$
with the same underlying topological space $|X|=|\X|$ as that space of $\X$, together with 
a locally free $\O_X$-module sheaf $\mathcal{M}$, such that 
\[ \O_\X\simeq \wedge_{\O_X}(\mathcal{M}), \]
where $\mathcal{M}$ is supposed to be purely odd. 
One sees that $X$ is necessarily the scheme associated with $\X$. 

If $\X$ is split, then $\X=\gr \X$. 
We see from \cite[Theorem A.2]{MZ2} that $\gr \X$ is split if $\X$ is Noetherian and smooth; 
see the proof of Proposition \ref{prop:grG/H}.  
In fact, the assumption can be weakened to $\X$ being Noetherian and regular
(in the sense of Schmitt \cite{Schmitt}); see \cite[Definition A.1]{MZ2}.


\subsection{Functorial viewpoint}\label{subsec:functor}
Let $\ms{SAlg}$ and $\ms{Set}$
denote the categories of superalgebras (over $\Bbbk$) and of sets, respectively. 
A $\Bbbk$-\emph{functor} is a functor $\mathfrak{F}: \ms{SAlg}\to \ms{Set}$. 
Let
\[
\ms{Func}= \ms{Set}^{\ms{SAlg}}
\]
denote the category of
$\Bbbk$-functors and natural transformations. 
A $\Bbbk$-functor $\mathfrak{F}$
is called a \emph{faisceau} (resp., \emph{faisceau dur}), if it
preserves finite direct products and if it turns every equalizer diagram of superalgebras
\[ \B \to \A \rightrightarrows \A \otimes_\B \A \]
that naturally arises from an fppf (resp., faithfully flat) map $\B \to \A$
(the paired arrows indicate
$a \mapsto a\ot 1,\ 1 \ot a$) into an equalizer diagram of sets
\[ \mathfrak{F}(\B) \to \mathfrak{F}(\A) \rightrightarrows \mathfrak{F}(\A \otimes_\B \A).  \]

Given $\A \in \ms{SAlg}$, we let
\[ \op{Sp}\, \A = \ms{SAlg}(\A, -) : \ms{SAlg}\to \ms{Set},\ \mathbb{T}\mapsto \ms{SAlg}(\A, \mathbb{T}) \]
denote the $\Bbbk$-functor represented by $\A$; this is alternatively denoted $\op{SSp}\A$ in \cite{MZ1}. 
Such a representable $\Bbbk$-functor is called 
a \emph{functorial affine superscheme}; see the last paragraph of this section. 
The definitions of \emph{open sub-functors} and of \emph{local functors} given by \cite[Part I, 1.7, 1.8]{J} in the ordinary
situation are directly generalized to the super situation; see \cite[Sect. 3]{MZ1}. 
A \emph{functorial superscheme} is a local $\Bbbk$-functor $F$ which is the
\emph{union}, $\mathfrak{F}=\bigcup_{i}\mathfrak{G}_i$, of some affine open sub-functors $\mathfrak{G}_i$ in the sense that
$\mathfrak{F}(K)=\bigcup_{i}\mathfrak{G}_i(K)$ for every field $K$ including $\Bbbk$. 

The category $\ms{Func}$ includes full subcategories in the relation:
\[
\begin{pmatrix}\text{functorial}\\ \text{affine superschemes}\end{pmatrix} 
\subset
\begin{pmatrix}\text{functorial}\\ \text{superschemes}\end{pmatrix}\subset
(\text{faisceaux dur})\subset (\text{faisceaux}).
\]
See \cite[Proposition 5.15]{MZ1}. 
The cited article \cite{MZ1} puts emphasis on the functorial viewpoint, while we do more
on the geometrical viewpoint;
the article calls (dur) $\Bbbk$-sheaves, (affine) superschemes and geometric superschemes
what we call faisceaux (dur), functorial (affine) superschemes and superschemes, respectively. 

Given a superscheme $\X$, the $\Bbbk$-functor
\[ \X^{\diamond} : \ms{SAlg}\to \ms{Set},\ \X^{\diamond}(\mathbb{T})=\op{Mor}(\S \mathbb{T}, \X), \]
where $\op{Mor}$ denotes the set of the morphisms of superschemes, 
is proved to be a functorial superscheme (see \cite[Lemma 5.2]{MZ1}),
and is called the functorial 
superscheme \emph{represented by} $\X$. We say that $\X$ \emph{represents} $\X^{\diamond}$.
For example, the affine superscheme $\S \A$ represents $\op{Sp} \A$; see \cite[Lemma 4.1]{MZ1}. 

Here we reproduce from \cite[Theorem 5.14]{MZ1} the Comparison Theorem:

\begin{theorem}\label{thm:comparison}
$\X \mapsto \X^{\diamond}$ gives rise to a category-equivalence from the category 
of superschemes to the category of functorial superschemes. 
\end{theorem}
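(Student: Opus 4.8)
The plan is to establish the two halves of an equivalence separately: that $\X\mapsto\X^{\diamond}$ is \emph{fully faithful}, and that it is \emph{essentially surjective} onto functorial superschemes. This is the super-analogue of the classical identification of a scheme with its functor of points, so I would follow the Demazure--Gabriel template, taking care at each step of the features peculiar to the super setting (odd nilpotents, and the even spectrum as the underlying space). Throughout I would use the two facts already recorded: that $\S\A$ represents $\op{Sp}\A$, so that $(\S\A)^{\diamond}=\op{Sp}\A$; and that $\X^{\diamond}$ is always a functorial superscheme.

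First, full faithfulness. The affine case is pure Yoneda: for any superscheme $\Y$ and any superalgebra $\A$, the covariant Yoneda lemma applied to $\op{Sp}\A=\ms{SAlg}(\A,-)$ gives $\operatorname{Nat}(\op{Sp}\A,\Y^{\diamond})\cong\Y^{\diamond}(\A)=\op{Mor}(\S\A,\Y)$, and one checks that the comparison map $\op{Mor}(\S\A,\Y)\to\operatorname{Nat}((\S\A)^{\diamond},\Y^{\diamond})$ realizes this isomorphism, hence is bijective. For general $\X$ I would choose an affine open cover $\{\X_i=\S\A_i\}$. On the geometric side, morphisms of superschemes form a sheaf for the Zariski topology, so $\op{Mor}(\X,\Y)$ is the equalizer of $\prod_i\op{Mor}(\X_i,\Y)\rightrightarrows\prod_{i,j}\op{Mor}(\X_i\cap\X_j,\Y)$. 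On the functorial side, $\X^{\diamond}=\bigcup_i\X_i^{\diamond}$ is a union of open sub-functors with $(\X_i\cap\X_j)^{\diamond}=\X_i^{\diamond}\cap\X_j^{\diamond}$, and natural transformations out of such a union are governed by the analogous equalizer over the cover. Matching the two equalizers term by term via the affine case (applied to $\X_i$, and to an affine refinement of $\X_i\cap\X_j$) yields the bijection in general.

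Next, essential surjectivity. Starting from a functorial superscheme $\mathfrak{F}=\bigcup_i\mathfrak{G}_i$ with $\mathfrak{G}_i=\op{Sp}\A_i$ affine open sub-functors, I would set $\X_i=\S\A_i$. The crucial step is a dictionary in the affine case: open sub-functors of $\op{Sp}\A_i=\X_i^{\diamond}$ correspond bijectively to open sub-superschemes of $\X_i$. Granting this, each intersection $\mathfrak{G}_i\cap\mathfrak{G}_j$, being an open sub-functor of both $\X_i^{\diamond}$ and $\X_j^{\diamond}$, is represented by an open sub-superscheme $\X_{ij}\subseteq\X_i$ and by $\X_{ji}\subseteq\X_j$, together with a canonical isomorphism $\X_{ij}\cong\X_{ji}$ supplied by full faithfulness. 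These isomorphisms constitute gluing data, whose cocycle condition on triple overlaps is read off from the triple intersections of the $\mathfrak{G}_i$; gluing produces a superscheme $\X$. I would finish by exhibiting a natural isomorphism $\X^{\diamond}\cong\mathfrak{F}$: both are unions of the same affine open sub-functors over the same index set, so the identifications on the pieces glue, and the faisceau and local-functor conditions force the resulting map to be an isomorphism on all superalgebras, not merely on fields.

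The main obstacle I expect is precisely this affine dictionary together with the passage from field-tested data to honest geometry. The definition of a functorial superscheme tests the union only on field points $K$, whereas representability and the functor of points involve all superalgebras $\mathbb{T}$, including those with odd (and even) nilpotents; bridging this gap is exactly where the local-functor and faisceau properties must be used. Establishing that open sub-functors of $\op{Sp}\A$ match open sub-superschemes of $\S\A$ requires controlling the even spectrum $\S(\A_0)$ as underlying space while tracking the full super structure sheaf and the locality of the induced stalk maps, so that the glued object is a genuine local-super-ringed space and the comparison morphism is local at every point. Once this super-geometric bookkeeping is in place, the categorical skeleton of the argument is the classical one.
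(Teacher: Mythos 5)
This paper does not actually prove Theorem \ref{thm:comparison}: it is reproduced from \cite[Theorem 5.14]{MZ1}, and the explicit quasi-inverse is delegated to \cite[Proposition 5.12, Lemma 5.13]{MZ1}, so there is no in-paper argument to compare against. Your outline---Yoneda plus Zariski gluing for full faithfulness, and essential surjectivity by representing the affine open sub-functors $\op{Sp}\A_i$ by $\S\A_i$, gluing via the dictionary between open sub-functors of $\op{Sp}\A$ and open sub-superschemes of $\S\A$, and then bridging the fields-only covering condition to all superalgebras using locality of the functors---is precisely the Demazure--Gabriel/Jantzen template that the cited source follows, so in substance you take the same route as the proof the paper relies on. The only caveat is that your two flagged ingredients (the affine dictionary, and the passage from field points to arbitrary superalgebras with odd nilpotents) are identified rather than proven; they are genuine lemmas, but your strategy and your diagnosis of where the work lies are both correct.
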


For an explicit quasi-inverse see \cite[Proposition 5.12, Lemma 5.13]{MZ1}. 

An affine superscheme $\X=\S \A$ and 
the assigned, functorial affine superscheme $\X^{\diamond}=\op{Sp} \A$ 
are both controlled by the superalgebra $\A$, and may not be distinguished in many situations.
We will call the latter as well, an \emph{affine superscheme}, omitting the word ``functorial", 
as usual. Even when one has to distinguish them,  
which is meant will be clear from the context or the notation.


\section{Affine supergroups and Hopf superalgebras}\label{sec:preliminaries2}

This section is devoted again to preliminaries, which include reproducing  two fundamental theorems on
affine supergroups and Hopf superalgebras.


\subsection{Affine supergroups}\label{subsec:supergroup}
One sees just as in the non-super situation that
the two categories treated in the last theorem have finite direct products (and more generally, fiber products). 
Therefore, both of them have group objects,
which we call \emph{supergroup schemes} and \emph{functorial supergroup schemes}, respectively.
The proved category-equivalence induces a category-equivalence between those group objects. 
But in what follows, we will discuss only affine supergroup schemes; they are precisely 
affine superschemes 
\begin{equation}\label{eq:supergroup}
\S \D,\quad \op{Sp} \D 
\end{equation}
equipped with group structure, which uniquely arises from a Hopf-superalgebra structure on $\D$. 
We call the two of \eqref{eq:supergroup} both an \emph{affine supergroup},
omitting the word ``scheme", following the custom of Jantzen \cite{J}. It is called an
\emph{affine algebraic supergroup} if the Hopf superalgebra $\D$ is finitely generated as a superalgebra.

Let $\G=\S \D$ be an affine supergroup.  A right $\D$-super-comodule is the same as a left $\G$-super-module. 
Given such a super-comodule $M=(M, \rho_M)$, the super-vector space $M^{co\D}$ of all $\D$-\emph{coinvariants} in $M$
is defined by
\begin{equation}\label{eq:coinvariants_in_M}
M^{co\D}=\{\, m\in M\mid \rho_M(m)=m\ot 1\, \}.
\end{equation}
This is identified with the co-tensor product $M\, \square_{\D}\, \Bbbk$, where $\Bbbk$ is the trivial, purely even left $\D$-super-comodule, 
and also with the super-vector space $M^{\G}$ of all $\G$-\emph{invariants} in $M$. 


\subsection{Affinity criteria}\label{subsec:affinity_criteria}
Let $\X=\S \A$ be an affine superscheme, and let $\G=\S \D$ be an affine supergroup.  
Suppose that $\G$ acts on $\X$ from the right. 
This means that there is given a morphism of (functorial) superschemes
$\X \times \G \to \X$,  called an \emph{action} by $\G$ on $\X$,
which satisfies the familiar associativity and unit-property. Such an action
arises uniquely from a right $\D$-super-comodule superalgebra structure
\[ \rho_{\A} : \A \to \A \otimes \D \]
on $\A$; it is by definition a superalgebra map with which $\A$ is a right $\D$-super-comodule. 
Let 
\begin{equation}\label{eq:coinvariants}
\B=\A^{co \D}\, (=\{\, b \in \A \mid \rho_{\A}(b)=b\otimes 1\, \}). 
\end{equation}
This is a sub-superalgebra of $\A$.
Let $\SM_{\B}$ denote
the category of right $\B$-supermodules. A super-vector space $M$ equipped with a right $\A$-supermodule
structure and a right $\D$-super-comodule structure 
\[ M \ot \A \to M,\ m\ot a \mapsto ma;\quad \rho_M : M \to M \otimes \D \]
is called a $(\D,\A)$-\emph{Hopf supermodule} (see \cite[p.454]{T2}) if it satisfies
\[ \rho_M(ma)=\rho_M(m)\, \rho_{\A}(a),\quad m \in M,\ a \in \A. \]
Let $\SM^{\D}_{\A}$ denote the category of $(\D,\A)$-Hopf
supermodules; the morphisms are $\A$-supermodule and $\D$-super-comodule maps.  Obviously, $\A \in \SM^{\D}_{\A}$. 
The categories $\SM_{\B}$ and $\SM_{\A}^{\D}$ are both $\Bbbk$-linear abelian. 
Given an object $N \in \SM_{\B}$, 
the right $\A$-supermodule $N\ot_{\B}\, \A$, equipped with
the right $\D$-super-comodule structure 
$\op{id}\ot_{\B}\, \rho_{\A} : N \ot_{\B}\A \to N \ot_{\B} (\A \ot \D)=(N \ot_{\B}\, \A) \ot \D$, 
turns into a $(\D,\A)$-Hopf supermodule. This construction gives rise to a $\Bbbk$-linear functor
\begin{equation}\label{eq:Hopf_module_functor}
\SM_{\B}\to \SM^{\D}_{\A},\ N\mapsto N\ot_{\B}\, \A, 
\end{equation}
which is left adjoint to
\begin{equation}\label{eq:coinv_functor}
\SM^{\D}_{\A}\to \SM_{\B},\ M \mapsto M^{co\D}.
\end{equation}

The following theorem, which is reproduced from \cite{MZ1}, 
is a super-analogue of U.~Oberst's Satz A of \cite{O};
see Remark \ref{rem:Schneider} (1) below. 
Some notion and notation used here will be explained soon below.

\begin{theorem}[\text{\cite[Theorem 7.1]{MZ1}}]\label{thm:superOberst}
Retain the situation as above.
\begin{itemize}
\item[(1)] The following are equivalent: 
\begin{itemize}
\item[(i)]
The action by $\G$ on $\X$ is free, and the faisceau dur $\X \tilde{\tilde{/}} \G$ is 
an affine superscheme;
\item[(ii)]
\begin{itemize}
\item[(a)]
$\A$ is injective as a right $\D$-super-comodule, and
\item[(b)]
the map 
\begin{equation}\label{eq:alpha}
\alpha=\alpha_{\A} : \A \otimes \A \to \A \otimes \D,\ \alpha(a \otimes a') = a\hspace{0.4mm} \rho_{\A}(a')
\end{equation}
is a surjection;
\end{itemize}
\item[(iii)]
\begin{itemize}
\item[(a)]
$\A$ is faithfully flat over $\B$, and
\item[(b)]
the map 
\begin{equation}\label{eq:beta}
\beta : \A \otimes_{\B} \A \to \A \otimes \D,\ \beta(a \otimes_{\B} a') = a\hspace{0.4mm} \rho_{\A}(a')
\end{equation}
induced from the map $\alpha$ in (ii) is a bijection.
\end{itemize}
\item[(iv)]
The functors \eqref{eq:Hopf_module_functor} and
\eqref{eq:coinv_functor} are (necessarily, mutually quasi-inverse) equivalences.   
\end{itemize}
If these equivalent conditions are satisfied, then $\X \tilde{\tilde{/}} \G=\op{Sp} \B$.
\item[(2)]
Suppose that $\G$ is algebraic, or in other words, $\D$ is finitely generated. Suppose that $\A$ is Noetherian. 
If the equivalent conditions above are satisfied, 
then $\X \tilde{\tilde{/}} \G$ is Noetherian (or equivalently, $\B$ is Noetherian), and it coincides with the faisceau $\X \tilde{/} \G$.
\end{itemize}
\end{theorem}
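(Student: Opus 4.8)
The plan is to treat the equivalences in~(1) as the super-analogue of the structure theorem for faithfully flat Hopf--Galois extensions (Oberst's Satz~A, in the sharpened form due to Schneider; see Remark~\ref{rem:Schneider}), and to read the geometric condition (i) off the Hopf-superalgebraic ones by faithfully flat descent. I would organize the proof of~(1) as the cycle $(\mathrm{iii})\Leftrightarrow(\mathrm{iv})$, $(\mathrm{iii})\Rightarrow(\mathrm{ii})$, $(\mathrm{ii})\Rightarrow(\mathrm{iii})$, together with $(\mathrm{i})\Leftrightarrow(\mathrm{iii})$, and extract the identification $\X \tilde{\tilde{/}} \G = \op{Sp}\B$ along the way. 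The first reduction is to push the problem into the ordinary theory wherever the grading is inert: by the comparisons recalled in Section~\ref{subsec:super_vs_non-super}, $\A$ is faithfully flat over $\B$ as a supermodule exactly when it is so as an ordinary module, and $\A$ is injective as a $\D$-super-comodule exactly when it is injective as an ordinary $\D$-comodule; moreover the antipode of the super-commutative Hopf superalgebra $\D$ is bijective. These are precisely the inputs that make Schneider's diagram chases available. The only genuinely super bookkeeping is to verify that $\alpha$, $\beta$, and the unit and counit of the adjunction \eqref{eq:Hopf_module_functor}--\eqref{eq:coinv_functor} are morphisms in the appropriate super-categories (they are, being assembled from $\rho_{\A}$, the multiplication, and the $\D$-coaction), after which the classical arguments transport verbatim.

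For $(\mathrm{iii})\Rightarrow(\mathrm{iv})$ I would argue that faithful flatness of $\A/\B$ makes $(-)\ot_\B \A$ faithfully exact, while bijectivity of $\beta$ forces the counit $M^{co\D}\ot_\B \A \to M$ to be an isomorphism for every $(\D,\A)$-Hopf supermodule $M$: after the faithfully flat base change along $\B \to \A$ the map $\beta$ trivializes the super-comodule structure, so every Hopf supermodule becomes cofree and the structure theorem follows by descent. The reverse $(\mathrm{iv})\Rightarrow(\mathrm{ii})$ reads injectivity of $\A$ and surjectivity of $\alpha$ off the equivalence, and $(\mathrm{iii})\Rightarrow(\mathrm{ii})$ is immediate: $\alpha$ factors through the surjection $\A\ot\A\twoheadrightarrow\A\ot_\B\A$ followed by the bijection $\beta$, and injectivity of $\A$ as a comodule descends from the cofreeness of $\A\ot\D\cong\A\ot_\B\A$ along faithful flatness.

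The step I expect to be the main obstacle is $(\mathrm{ii})\Rightarrow(\mathrm{iii})$: upgrading injectivity of $\A$ as a comodule together with surjectivity of $\alpha$ to bijectivity of $\beta$ and, above all, to faithful flatness of $\A$ over $\B$. Injectivity furnishes a total integral, and one runs Schneider's dominance argument to invert $\beta$ and to exhibit $\A$ as a faithfully flat $\B$-module; the super-commutative bookkeeping is handled by comparing with the associated ordinary algebra $A=\A/(\A_1)$, the super-ideal $(\A_1)$ being nil because every odd element squares to zero in characteristic $\ne 2$.

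The geometric half $(\mathrm{i})\Leftrightarrow(\mathrm{iii})$ and the identification of the quotient go hand in hand. Freeness of the $\G$-action means the morphism $\X\times\G\to\X\times\X$, $(x,g)\mapsto(x,xg)$, is a monomorphism of faisceaux, which on coordinate superalgebras is what makes $\beta$ injective; if in addition $\X\tilde{\tilde{/}}\G$ is affine, faithfully flat descent along $\X\to\X\tilde{\tilde{/}}\G$ yields $\A\ot_{\O(\X\tilde{\tilde{/}}\G)}\A\cong\A\ot\D$, identifying the global sections with $\B$ and giving~(iii). Conversely, under~(iii) faithful flatness makes $\X\to\op{Sp}\B$ a faithfully flat covering whose kernel pair $\A\ot_\B\A$ is identified by $\beta$ with $\X\times\G$, so the faisceau-dur property forces $\op{Sp}\B=\X\tilde{\tilde{/}}\G$. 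Finally, for~(2) I would descend Noetherianness: since $\A$ is faithfully flat over $\B$ and $\D$ is finitely generated, the criteria of Section~\ref{subsec:Noetherian_superalgebra} give that $\B$ is Noetherian, and Noetherianness makes the faithfully flat $\A/\B$ automatically finitely presented, whence the fppf and faithfully-flat sheafifications agree and $\X\tilde{\tilde{/}}\G=\X\tilde{/}\G$.
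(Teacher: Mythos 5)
First, a point of framing: the paper does not prove Theorem \ref{thm:superOberst} itself --- it is reproduced from \cite[Theorem 7.1]{MZ1}, and Remark \ref{rem:Schneider}\,(1) records how it is proved there: by the \emph{bosonization} technique, which converts the super data into ordinary (but non-commutative) Hopf-algebraic data, after which Schneider's Theorem I of \cite{S} applies literally. Your proposal instead tries to run the Oberst--Schneider arguments directly on the super objects, and its central reduction contains the genuine gap. You claim that, because flatness and injectivity can be tested after forgetting the $\mathbb{Z}/(2)$-grading (Section \ref{subsec:super_vs_non-super}), ``the classical arguments transport verbatim.'' But forgetting the grading does \emph{not} produce an ordinary Hopf--Galois situation: a Hopf superalgebra $\D$, viewed as an ungraded object, is not an ordinary Hopf algebra, because its coproduct $\Delta_{\D} : \D \to \D \ot \D$ is an algebra map only for the \emph{signed} (super) multiplication on $\D \ot \D$; likewise $\A$ is not an ordinary $\D$-comodule algebra. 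So Schneider's theorem, stated for ordinary Hopf algebras, cannot be invoked as-is; this is exactly the gap that bosonization fills in \cite{MZ1} (the bosonization is genuinely non-commutative even though $\A$, $\B$, $\D$ are super-commutative, which is why Schneider's non-commutative generality is essential, as Remark \ref{rem:Schneider}\,(1) emphasizes). The alternative you implicitly gesture at --- redoing Schneider's entire proof internally in the symmetric tensor category of super-vector spaces --- is viable in principle, but it is a substantial verification (total integrals, the relative Hopf-module structure theorem, faithfully flat descent, all with Koszul signs), not the ``bookkeeping'' of checking that $\alpha$ and $\beta$ are super-morphisms.

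Two further steps are incorrect as stated. Freeness of the action is \emph{not} the same as injectivity of $\beta$: freeness says that $\op{Sp}(\A \ot \D) \to \op{Sp}(\A \ot \A)$ is a monomorphism of functors, i.e., that $\alpha$ is an epimorphism in the category of superalgebras --- a condition weaker than surjectivity, and not obviously related to injectivity of $\beta$; the paper records only the one-way implication that surjectivity of $\alpha$ forces freeness. Your argument for (i)$\Rightarrow$(iii) therefore cannot start from ``$\beta$ is injective.'' And in Part (2), the claim that ``Noetherianness makes the faithfully flat $\A/\B$ automatically finitely presented'' is false: $\Bbbk[[t]]$ is Noetherian and faithfully flat over $\Bbbk$ but not finitely presented over it. The finite presentation needed to compare the fppf sheafification $\X\tilde{/}\G$ with the faithfully-flat one $\X\tilde{\tilde{/}}\G$ must instead be \emph{descended} along the faithfully flat map $\B \to \A$, using the isomorphism $\beta : \A \ot_{\B} \A \simeq \A \ot \D$ to see that the base-extended algebra is finitely presented (this is precisely how the paper argues in the proof of Proposition \ref{prop:affinity2}\,(4)).
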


In the situation above we say that the action $\X \times \G \to \X$ is 
\emph{free} (see Condition (i) above), if for every $\mathbb{T}\in \ms{SAlg}$,
the action $\X(\mathbb{T}) \times \G(\mathbb{T})\to \X(\mathbb{T})$,\ $(x,g)\mapsto x^g$ is free; 
this last means that $x^g=x$ implies $g=1$,
or equivalently, that 
$\X(\mathbb{T}) \times \G(\mathbb{T})\to \X(\mathbb{T})\times \X(\mathbb{T})$,\ $(x, g)\mapsto (x, x^g)$
is injective. Obviously, the action $\X \times \G \to \X$ is free if Condition (ii)(b) above is satisfied. 

Given a $\Bbbk$-functor $\mathfrak{F} \in \ms{Func}$, there exists uniquely a faisceau $\tilde{\mathfrak{F}}$ 
equipped with a morphism $\mathfrak{F} \to \tilde{\mathfrak{F}}$ in $\ms{Func}$ such that for any faisceau $\mathfrak{G}$, 
the map $\ms{Func}(\tilde{\mathfrak{F}}, \mathfrak{G}) \to \ms{Func}(\mathfrak{F},\mathfrak{G})$ induced by the morphism is a bijection;
see \cite[Proposition 3.6]{MZ1}. We have the faisceau dur $\tilde{\tilde{\mathfrak{F}}}$ with the
analogous universality for faisceaux dur. If $\mathfrak{F}$ preserves finite direct products and 
has the property that if $\mathbb{S}\to \mathbb{T}$ is fppf (resp., faithfully flat), 
$\mathfrak{F}(\mathbb{S}) \to \mathfrak{F}(\mathbb{T})$ is an injection, then the construction of $\tilde{\mathfrak{F}}$ 
(resp., of $\tilde{\tilde{\mathfrak{F}}}$) is quite simple, and we have 
\[ \mathfrak{F}(\mathbb{T})\subset \tilde{\mathfrak{F}}(\mathbb{T}) \subset \tilde{\tilde{\mathfrak{F}}}(\mathbb{T}) \] 
for every $\mathbb{T}\in \ms{SAlg}$;
see \cite[Remark 3.8]{MZ1}. This is the case if $\mathfrak{F}$ is the $\Bbbk$-functor which assigns to
every $\mathbb{T}$, the set $\X(\mathbb{T})/\G(\mathbb{T})$ of $\G(\mathbb{T})$-orbits
in $\X(\mathbb{T})$, provided the action by $\G$ on $\X$ is free. 
In this case $\tilde{\mathfrak{F}}$ (resp., $\tilde{\tilde{\mathfrak{F}}}$) is denoted 
\[ \X\tilde{/}\G\quad (\text{resp.,}\ \X\tilde{\tilde{/}}\G). \]

For our purpose it is enough to work only with free actions.

\begin{definition}[\text{cf. \cite[Definition 8.1.1]{Mo}}]\label{def:Galois}
 If the equivalent conditions (i)--(iv) in Part 1 of Theorem \ref{thm:superOberst} are satisfied,
we say that $\A \supset \B$ is a $\D$-\emph{Galois extension}.
\end{definition}
 
\begin{rem}\label{rem:Schneider}
(1)\
H.-J.~Schneider \cite[Theorem I]{S} proved the Oberst Satz cited above in the non-commutative, purely Hopf-algebraic
situation, in which $\A$, $\B$ and $\D$ as above may be non-commutative
while neither $\G$ nor $\X$ is referred to. 
In \cite{MZ1} the theorem 
reproduced above was derived from Schneider's Theorem, by using the bosonization technique 
developed therein.

(2)\ As is remarked by \cite[Remark 7.2]{MZ1}, $\SM^{\D}_{\A}=(\SM^\D_\A, \ot_\A, \A)$ is a tensor category with respect to
the tensor product $\ot_\A$ over $\A$ and the unit object $\A$.  In particular, $\A$ is an algebra object in it. Moreover,
the functor \eqref{eq:Hopf_module_functor} is a tensor functor
$\SM_{\B}=(\SM_\B, \ot_\B, \B) \to \SM^{\D}_{\A}$.
\end{rem}

\subsection{The quotient superscheme $\X/\G$}\label{subsec:X/G}
Suppose that an affine supergroup $\G$ acts freely on an affine superscheme $\X$ from the right.
The \emph{quotient superscheme} $\X/\G$ 
is a superscheme equipped with a morphism from $\X$, such that
\[ \X \times \G \rightrightarrows \X \to \X/\G \]
is a co-equalizer diagram of superschemes, where the paired arrows indicate the original $\G$-action and the trivial
$\G$-action. If such a superscheme exists it is unique in the obvious sense.  The morphism $\X \to \X/\G$ will not 
be referred to if it is obvious. 

\begin{lemma}\label{lem:X/G}
If the faisceau $\X\tilde{/}\G$ happens to be a functorial superscheme, then
the quotient superscheme $\X/\G$ exists, and it necessarily represents $\X\tilde{/}\G$. 
\end{lemma}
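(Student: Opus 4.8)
The plan is to deduce everything from the Comparison Theorem (Theorem~\ref{thm:comparison}). Write $\mathfrak{F}$ for the $\Bbbk$-functor $\mathbb{T}\mapsto \X^{\diamond}(\mathbb{T})/\G^{\diamond}(\mathbb{T})$ of orbits, so that $\X\tilde{/}\G=\tilde{\mathfrak{F}}$, and let $q:\X^{\diamond}\to\mathfrak{F}$ be the level\-wise-surjective orbit projection, $\eta:\mathfrak{F}\to\X\tilde{/}\G$ the canonical morphism to the associated faisceau, and $p=\eta\circ q:\X^{\diamond}\to\X\tilde{/}\G$. By hypothesis $\X\tilde{/}\G$ is a functorial superscheme, so by the essential surjectivity of the equivalence $(-)^{\diamond}$ there is a superscheme, which I define to be $\X/\G$, together with an isomorphism $(\X/\G)^{\diamond}\simeq\X\tilde{/}\G$; transporting $p$ through this isomorphism and invoking the full faithfulness of $(-)^{\diamond}$ produces a unique morphism of superschemes $\bar p:\X\to\X/\G$ with $(\bar p)^{\diamond}=p$.

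It remains to check that $\X\times\G\rightrightarrows\X\os{\bar p}{\tto}\X/\G$ is a co-equalizer of superschemes, where the two arrows are the action and the trivial action. Since $(-)^{\diamond}$ is an equivalence it preserves finite products (both categories have them, as noted in Section~\ref{subsec:supergroup}), so $(\X\times\G)^{\diamond}\simeq\X^{\diamond}\times\G^{\diamond}$, and it preserves and reflects coequalizers; hence it suffices to prove that
\[ \X^{\diamond}\times\G^{\diamond}\rightrightarrows\X^{\diamond}\os{p}{\tto}\X\tilde{/}\G \]
is a coequalizer in the category of functorial superschemes, the parallel arrows being the functor-of-points action $a$ and trivial action $t$.

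For this I would verify the universal property directly. Let $\mathfrak{Z}$ be a functorial superscheme and $f:\X^{\diamond}\to\mathfrak{Z}$ a morphism with $f\circ a=f\circ t$. The latter says exactly that, for each $\mathbb{T}$, the map $f(\mathbb{T})$ is constant on $\G^{\diamond}(\mathbb{T})$-orbits, so $f$ factors uniquely as $f=\bar f\circ q$ for a morphism $\bar f:\mathfrak{F}\to\mathfrak{Z}$. Now $\mathfrak{Z}$, being a functorial superscheme, is a faisceau; therefore the universal property of the associated faisceau $\eta:\mathfrak{F}\to\tilde{\mathfrak{F}}=\X\tilde{/}\G$ yields a unique $g:\X\tilde{/}\G\to\mathfrak{Z}$ with $g\circ\eta=\bar f$, whence $g\circ p=g\circ\eta\circ q=\bar f\circ q=f$. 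Uniqueness of $g$ follows because $q$ is an epimorphism in $\ms{Func}$ (it is surjective at every $\mathbb{T}$), so any two solutions agree after precomposition with $\eta$ and then coincide by the uniqueness clause of the faisceau universal property.

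The one point that needs genuine care — and the only real obstacle — is this last universal-property argument: one must keep straight the two successive quotient steps (passage to orbits $q$, then faisceau-ification $\eta$) and check that the hypothesis that $\X\tilde{/}\G$ is a functorial superscheme enters only to make the target $\mathfrak{Z}$ a faisceau, so that the universality of $\eta$ applies. Everything else — essential surjectivity, full faithfulness, and preservation and reflection of the finite products and coequalizers involved — is a formal consequence of $(-)^{\diamond}$ being a category-equivalence, and the asserted uniqueness of $\X/\G$ together with $\bar p$ is immediate, since a coequalizer is determined up to unique isomorphism.
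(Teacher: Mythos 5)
Your proof is correct and follows essentially the same route as the paper: the paper's proof simply observes that the construction of $\X\tilde{/}\G$ yields a co-equalizer diagram of faisceaux, notes that under the hypothesis all its objects are functorial superschemes, and invokes the Comparison Theorem, which is precisely what you do. Your two-step factorization (orbit projection $q$ followed by faisceau-ification $\eta$) and the universal-property check against a faisceau target are just an explicit unfolding of what the paper compresses into ``as is seen from the construction of $\X\tilde{/}\G$.''
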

\begin{proof}
As is seen from the construction of $\X\tilde{/}\G$,
we have the co-equalizer diagram $\X \times \G \rightrightarrows \X \to \X\tilde{/}\G$ of faisceaux. 
This is a diagram of functorial superschemes under the assumption above. Now, Theorem \ref{thm:comparison} proves
the lemma. 
\end{proof}


\subsection{Tensor product decomposition of a Hopf superalgebra}\label{subsec:Hopf}
Let $\G =\S \C$ be an affine supergroup. Thus $\C=\C_0\oplus \C_1$ is a Hopf superalgebra. We assume
that $\G$ is an algebraic supergroup, or in other words, $\C$ is finitely generated.
For later use we set this assumption, without which many of what follows, however, are known to be true. 
The coproduct, the counit and the antipode of this or any other Hopf superalgebra will be denoted so as
\begin{equation}\label{eq:Hopf_struc}
\Delta_\C :\C \to \C \ot \C,\quad \varepsilon_\C : \C \to \Bbbk,\quad \mathcal{S}_\C : \C \to \C, 
\end{equation}
respectively. 

The super-ideal $(\C_1)$ generated by $\C_1$ is a Hopf super-ideal of $\C$, so that the algebra
\[ C =\C/(\C_1) \]
associated with $\C$ (see \eqref{eq:assoc_algebra}) is a quotient, ordinary Hopf algebra of $\C$,
which is obviously 
finitely generated. The associated affine algebraic group $G=\S C$ may be regarded as an affine algebraic
supergroup such that $G(\mathbb{T})=\G(\mathbb{T}_0)$,\ $\mathbb{T}\in \ms{SAlg}$.
The underlying topological space $|\G|$ of $\G$ is naturally identified with that space $|G|$ of $G$, and 
we have the closed embedding $\G \supset G$ which is identical on the underlying topological space. 

Let $q : \C \to C$ denote the quotient map. The composite 
\[ \lambda_{\C} : \C \os{\Delta_{\C}}{\longrightarrow} \C \ot \C \os{q \ot \op{id}}{\longrightarrow} C \ot \C, \]
makes $\C$ into a 
left $C$-super-comodule superalgebra; this is equivalent to saying that $\G$ is a left $G$-equivariant
superscheme. 
Note that $\C$ includes $\C_0$ as a $C$-comodule subalgebra. 
Analogously to \eqref{eq:coinvariants}, we let
\begin{equation}\label{eq:left_coinvariants}
{}^{coC}\C=\{\, x \in \C \mid \lambda_{\C}(x)=1\ot x\, \}
\end{equation}
denote the sub-superalgebra of $\C$ consisting of all \emph{left} $C$-\emph{coinvariants} in $\C$. 

\begin{lemma}[\text{\cite[Footnote 5]{M3}}]\label{lem:CcoC}
There exists a left $C$-comodule algebra map 
\[ \xi : C \to \C_0 \]
such that $q \circ \xi = \op{id}_C$. 
It gives rise to an isomorphism,
\begin{equation}\label{eq:CcoC}
C \ot {}^{coC}\C \os{\simeq}{\longrightarrow} \C,\ \, c \otimes x \mapsto \xi(c)x,  
\end{equation}
of left $C$-super-comodule superalgebras.
\end{lemma}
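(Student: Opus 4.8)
The plan is to prove the two assertions in the stated order: first produce the section $\xi$, and then show that, for any such $\xi$, the multiplication map $\mu \colon C \ot {}^{coC}\C \to \C$, $\mu(c\ot x)=\xi(c)x$, is the asserted isomorphism. The second half is essentially formal once $\xi$ is available, so I regard the construction of $\xi$ as the real content and treat it last.

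Granting $\xi$, here is how I would establish the isomorphism. Since $\xi(c)\in\C_0$ is central in the super-commutative $\C$ and $C$ is purely even (so that $C\ot{}^{coC}\C$ carries the ordinary tensor-product multiplication, with no signs), one checks at once that $\mu$ is a superalgebra map: $\xi(cc')xx'=\xi(c)\xi(c')xx'=\xi(c)x\,\xi(c')x'$. That $\mu$ is left $C$-colinear is the short computation $\lambda_\C(\xi(c)x)=\lambda_\C(\xi(c))\,\lambda_\C(x)=\big(\sum c_{(1)}\ot\xi(c_{(2)})\big)(1\ot x)=\sum c_{(1)}\ot\xi(c_{(2)})x$, where the first equality uses that $\lambda_\C$ is an algebra map, the second that $\xi$ is $C$-colinear and that $x$ is a coinvariant. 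For bijectivity I would exhibit the inverse explicitly. Because $\xi$ is an algebra map, its convolution inverse in $\op{Hom}(C,\C)$ is $\xi\circ\mathcal{S}_C$; writing $\lambda_\C(a)=\sum q(a_{(1)})\ot a_{(2)}$, the map $\nu\colon\C\to C\ot{}^{coC}\C$, $\nu(a)=\sum q(a_{(1)})\ot \xi(\mathcal{S}_C(q(a_{(2)})))\,a_{(3)}$, has its second tensorand landing in ${}^{coC}\C$ and is a two-sided inverse of $\mu$: the identity $\mu\nu=\op{id}$ collapses via $\sum q(a_{(1)})\mathcal{S}_C(q(a_{(2)}))=\varepsilon_\C(a)1$, and $\nu\mu=\op{id}$ is the analogous antipode computation together with the coinvariance of the elements of ${}^{coC}\C$. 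Alternatively, $\xi$ exhibits $\C$ as a cleft left $C$-comodule algebra, so one may instead invoke the normal-basis form of the structure theory underlying Theorem \ref{thm:superOberst}.

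The crux is the existence of $\xi$. The map $q$ restricts to a surjection $q_0\colon\C_0\to C$ of commutative algebras whose kernel is $\C_1^2$; since $\op{char}\Bbbk\ne2$ every odd element squares to zero, so $\C_1^2$ is a nilpotent ideal, and a direct calculation (using $\lambda_\C(\C_1)\subseteq C\ot\C_1$) shows that $\C_1^2$, and likewise each power $(\C_1^2)^k$, is a $C$-subcomodule ideal of $\C_0$. Thus I must split the $C$-colinear algebra surjection $q_0$ across the nilpotent comodule ideal $\C_1^2$. The strategy is two-step. First, since the regular comodule $C$ is injective (cofree of rank one) over itself, the colinear surjection $q_0$ admits a $C$-colinear section $s\colon C\to\C_0$; this $s$ need not be multiplicative. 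Second, I would upgrade $s$ to an algebra map, keeping it colinear, by induction along the finite filtration $\C_0\supset\C_1^2\supset(\C_1^2)^2\supset\cdots$: at each square-zero stage the failure $s(cc')-s(c)s(c')$ is a colinear Hochschild $2$-cocycle valued in the comodule $(\C_1^2)^k/(\C_1^2)^{k+1}$, and one corrects $s$ by a colinear $1$-cochain.

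The delicate point — which I expect to be the main obstacle — is exactly this correction: the lift must be simultaneously multiplicative and $C$-colinear, so the obstruction lives in an equivariant (rational) second Hochschild cohomology of $C$ with coefficients in the square-zero pieces, and what has to be arranged is the vanishing of that group. This is precisely the statement that a finitely generated Hopf superalgebra is \emph{split}, and it is not formal in positive characteristic, where the associated group $G=\S C$ need not be smooth. In a first pass I would obtain the vanishing from the relative injectivity of the coefficient comodules together with the exterior nature of the graded pieces recorded in Section \ref{subsec:graded_superalgebra} (the surjection $\wedge_C(I_\C/I_\C^2)\to\gr\C$), and in any case the result may be imported from \cite[Footnote 5]{M3}.
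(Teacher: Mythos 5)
Your second half is fine: granting $\xi$, the colinearity computation and the explicit inverse $a \mapsto \sum q(a_{(1)}) \ot \xi(\mathcal{S}_C(q(a_{(2)})))\,a_{(3)}$ constitute exactly the standard cleft-extension argument, and this is what the paper invokes as Sweedler's Hopf-Module Theorem in Remark \ref{rem:tensor_prod_decomp}~(2). The genuine gap is the existence of $\xi$, which is the entire content of the lemma, and your proposal does not close it. Even your first step is misjustified: injectivity of $C$ as a comodule over itself lets you extend colinear maps \emph{into} $C$ along injections; it does not split the surjection $\C_0 \to C$ \emph{onto} $C$ --- that would require a projectivity-type property of $C$, or injectivity of the kernel $\C_1^2$, neither of which is available. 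More seriously, the inductive multiplicative correction requires a specific obstruction class in (colinear) Hochschild/Harrison cohomology to vanish, and, as you yourself concede, this is not formal: in characteristic $p>2$ the algebra $C$ need not be smooth, so square-zero extensions of $C$ do not split for general deformation-theoretic reasons, and neither comodule injectivity nor the surjection $\wedge_C(I_\C/I_\C^2) \to \gr\C$ produces the vanishing. Appealing to \cite[Footnote 5]{M3} at that point is circular, since the lemma being proved \emph{is} that footnote.

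The paper's route avoids deformation theory altogether. It takes the tensor-product decomposition $\psi : \C \os{\simeq}{\longrightarrow} C \ot \wwG$ of Theorem \ref{thm:tensor_prod_decomp} (cited from \cite[Theorem 4.5]{M1}, where the hard, genuinely Hopf-superalgebraic work is done) and simply sets $\xi(c) = \psi^{-1}(c \ot 1)$; colinearity and multiplicativity of $\xi$ are inherited from $\psi$ being a colinear superalgebra isomorphism, $q \circ \xi = \op{id}_C$ follows from the normalization property of $\psi$ recorded in Remark \ref{rem:tensor_prod_decomp}, and then Sweedler's theorem gives \eqref{eq:CcoC} exactly as in your second half. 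So to repair your proof you should either quote Theorem \ref{thm:tensor_prod_decomp} for the existence of $\xi$, or supply an actual proof that the relevant obstruction vanishes --- which, in effect, would amount to reproving that theorem.
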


For a proof of the lemma see Remark \ref{rem:tensor_prod_decomp} (2) below. 

Let $\C^+=\op{Ker}(\varepsilon_{\C})$ denote the augmentation super-ideal of $\C$. 
Since $\C$ is finitely generated, it follows that for every $n >0$, $(\C^+)^n$ is co-finite-dimensional,
or $\dim(\C/(\C^+)^n)< \infty$. By definition
\[ T^*_{\varepsilon}(\G)= \C^+/(\C^+)^2 \]
is the \emph{cotangent super-vector space} of $\G$ at the identity element; 
it is finite-dimensional. The odd component $T^*_{\varepsilon}(\G)_1$ of this
$T^*_{\varepsilon}(\G)$ is denoted by
\[ \wG= \C_1/\C_0^+\C_1, \] 
where $\C_0^+=\C_0 \cap \C^+$. The even component $T^*_{\varepsilon}(\G)_0$ is seen to coincide with
the cotangent space $T^*_{\varepsilon}(G)$ of $G$ at the identity element. 
The right adjoint action
\[ \G(\mathbb{T})\times G(\mathbb{T}) \to \G(\mathbb{T}),\ (h,g)\mapsto g^{-1}hg;\quad \mathbb{T} \in \ms{SAlg} \]
induces a left $G$-supermodule structure, or equivalently, a right $C$-super-comodule structure on $\C$,
which in turn induces such a structure on $T^*_{\varepsilon}(\G)$, and hence on $\wG$ by restriction. 
The resulting $G$-action
on $\wG$ is called the \emph{left co-adjoint action}. The right $G$-action on $\wG$ analogously induced 
from the left $G$-adjoint action $(g, h)\mapsto ghg^{-1}$ on $\G$ is called the \emph{right co-adjoint action}. 

The dual super-vector spaces $(\C/(\C^+)^n)^*$ of $\C/(\C^+)^n$, $n > 0$, amount to
\emph{the hyper-superalgebra of} $\G$
\[ \op{hy}(\G)=\bigcup_{n>0}(\C /(\C^+)^n)^* \]
in $\C^*$. This is a super-cocommutative Hopf superalgebra with trivial coradical,
which is not necessarily super-commutative; see \cite[Sections 2.5, 4.3]{M2}, for example.
The Lie superalgebra
\[ P(\op{hy}(\G))= (\C^+/(\C^+)^2)^* \]
consisting of all primitive elements in $\op{hy}(\G)$ is
\emph{the Lie superalgebra} $\Lie(\G)$ \emph{of} $\G$.
Its odd component is dual to $\wG$, or in notation,
\begin{equation}\label{eq:Lie1_W}
\Lie(\G)_1=\wG^*.
\end{equation}
The left (or right) $G$-action on $\op{Lie}(\G)_1$ dual to the right (or left) co-adjoint $G$-action
on $\wG$ is called the \emph{adjoint action}. 
We remark that the even component $\Lie(\G)_0$ of $\Lie(\G)$ coincides with the Lie algebra
$\Lie(G)$ of the affine algebraic group $G$.  

The exterior algebra $\wwG$ on $\wG$ has the natural Hopf-superalgebra structure with every element
in $\wG$ being primitive; thus, the counit is such that $\varepsilon_{\wwG}(w)=0$ for every $w\in \wG$. 

\begin{theorem}[\text{\cite[Theorem 4.5]{M1}}]\label{thm:tensor_prod_decomp}
There exists an isomorphism of left $C$-super-comodule superalgebras
\begin{equation}\label{eq:psi}
\psi : \C \os{\simeq}{\longrightarrow} C \ot \wwG
\end{equation}
such that $(\varepsilon_C\ot \op{id}_{\wwG})\circ \psi: \C \to \wwG$, composed with the 
projection $\wwG \to \wedge^0(\wG) \oplus \wedge^1(\wG)=\Bbbk\oplus \wG$, coincides with the natural map 
$\C \to \C_0/\C_0^+\oplus \C_1/\C_0^+\C_1=\Bbbk\oplus \wG$. 
\end{theorem}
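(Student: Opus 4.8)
The plan is to deduce the theorem from Lemma \ref{lem:CcoC} by pinning down the structure of the coinvariant subalgebra. Write $\mathcal{J} := {}^{coC}\C$. Lemma \ref{lem:CcoC} already provides an isomorphism $\Phi: C\ot\mathcal{J} \os{\simeq}{\to}\C$ of left $C$-super-comodule superalgebras, so the entire content is to produce a superalgebra isomorphism $\wwG \os{\simeq}{\to}\mathcal{J}$ and to transport it. First I would record two structural identities that make $\mathcal{J}$ recognizable. Since the algebra associated with $\C$ is $C$ and $\C\cong C\ot\mathcal{J}$, passing to $\C/(\C_1)\cong C\ot \mathcal{J}/(\mathcal{J}_1)$ forces $\mathcal{J}/(\mathcal{J}_1)=\Bbbk$; hence $\mathcal{J}_0=\Bbbk\oplus\mathcal{J}_1^2$ and the super-ideal $(\mathcal{J}_1)$ coincides with the augmentation super-ideal $\mathcal{J}^+$. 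Next, the same cotangent computation that underlies the identity $T^*_{\varepsilon}(\G)_0=T^*_{\varepsilon}(G)$ noted in Section \ref{subsec:Hopf}, carried out after splitting off the $C$-factor, shows that $\mathcal{J}^+/(\mathcal{J}^+)^2$ is purely odd and is canonically $\wG$.

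Second, I would check that $\mathcal{J}$ is finite-dimensional. Under $\Phi$ the coinvariant algebra is realized as the quotient $\mathcal{J}\cong\C/(C^+\C)$ of the finitely generated superalgebra $\C$, so $\mathcal{J}$ is itself finitely generated; in particular $\mathcal{J}_1$ is finitely generated over $\mathcal{J}_0$ by Noetherian criterion (ii). Combined with $\mathcal{J}_0=\Bbbk\oplus\mathcal{J}_1^2$, this shows $\mathcal{J}$ is generated as a $\Bbbk$-algebra by finitely many odd elements $m_1,\dots,m_k$. Since the characteristic is $\ne 2$, every odd element squares to zero, and the $m_i$ anticommute, so only the squarefree monomials in the $m_i$ survive; hence $\dim_{\Bbbk}\mathcal{J}\le 2^k<\infty$ and $\mathcal{J}^+$ is nilpotent.

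Third, I construct the map. Choose a $\Bbbk$-linear section $s:\wG\to\mathcal{J}_1$ of the projection $\mathcal{J}_1\to\mathcal{J}_1/\mathcal{J}_0^+\mathcal{J}_1=\wG$. Because $s(\wG)$ consists of odd (hence square-zero) elements, the universal property of the exterior algebra yields a superalgebra map $\tilde s:\wwG\to\mathcal{J}$ whose degree-one part is $s$. By the second structural identity, $\mathcal{J}^+=s(\wG)+(\mathcal{J}^+)^2$, so Nakayama — legitimate since $\mathcal{J}^+$ is nilpotent — shows that $s(\wG)$ generates $\mathcal{J}^+$; therefore $\tilde s$ is surjective. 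It remains to prove $\tilde s$ injective, equivalently $\dim_{\Bbbk}\mathcal{J}=2^{\dim\wG}$.

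This dimension count is the main obstacle: surjectivity only gives $\dim_{\Bbbk}\mathcal{J}\le 2^{\dim\wG}$, and the reverse inequality is exactly where the Hopf-algebra structure, not merely the algebra structure, must enter. I would extract it from the associated graded. Because $(\C_1)$ is a Hopf super-ideal, the $(\C_1)$-adic filtration makes $\gr\C$ a graded Hopf superalgebra over $C=\gr\C(0)$ whose degree-one component $I_\C/I_\C^2$ is purely odd and primitive; a Hopf--Leray/Borel-type argument then shows that the canonical surjection $\wedge_C(I_\C/I_\C^2)\to\gr\C$ of Section \ref{subsec:graded_superalgebra} is an isomorphism. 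Reducing modulo $C^+$ gives $\wwG\os{\simeq}{\to}\gr\mathcal{J}$, whence $\dim_{\Bbbk}\mathcal{J}=\dim_{\Bbbk}\gr\mathcal{J}=2^{\dim\wG}$ and $\tilde s$ is an isomorphism; a parallel route would instead pair $\mathcal{J}$ with the exterior algebra on the odd primitives $\Lie(\G)_1=\wG^*$ sitting in $\op{hy}(\G)$. Finally I set $\psi:=(\op{id}_C\ot\tilde s^{-1})\circ\Phi^{-1}:\C\to C\ot\wwG$; it is an isomorphism of left $C$-super-comodule superalgebras, and the stated compatibility with the projection onto $\Bbbk\oplus\wG$ is immediate, since $(\varepsilon_C\ot\op{id}_{\wwG})\circ\psi=\tilde s^{-1}\circ\big((\varepsilon_C\ot\op{id}_{\mathcal{J}})\circ\Phi^{-1}\big)$ is the coinvariant projection followed by $\tilde s^{-1}$, and $\tilde s$ induces the identity on $\wG$ by construction.
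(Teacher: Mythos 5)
Your argument is circular relative to the logical structure of this paper and of the sources it cites. You take Lemma \ref{lem:CcoC} as your starting input, but the paper's proof of that lemma \emph{is} Remark \ref{rem:tensor_prod_decomp}~(2): the section $\xi$ is obtained by setting $\xi(c)=\psi^{-1}(c\ot 1)$, where $\psi$ is the isomorphism of Theorem \ref{thm:tensor_prod_decomp} --- the very statement you are proving --- and the isomorphism \eqref{eq:CcoC} is then deduced from $\xi$ via Sweedler's Hopf-Module Theorem (the cited source \cite{M3} follows the same route). The existence of a left $C$-colinear algebra section $\xi : C \to \C_0$ of $q$ is precisely the hard, non-formal content of the theorem; once it is granted, the remaining task of identifying ${}^{coC}\C$ with $\wwG$ is the comparatively routine part. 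So your proposal assumes the crux rather than proving it. Note also that the paper itself does not reprove this theorem: it quotes it from \cite[Theorem 4.5]{M1}, and Remark \ref{rem:tensor_prod_decomp} only upgrades counit-preservation to the stated compatibility with the projection onto $\Bbbk\oplus\wG$; so any genuine proof would have to reproduce the construction of \cite{M1}, which your argument does not touch.

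There is a second gap at the step you yourself flag as the ``main obstacle,'' namely the count $\dim_\Bbbk {}^{coC}\C = 2^{\dim \wG}$. You appeal to ``a Hopf--Leray/Borel-type argument'' for the bijectivity of $\wedge_C(I_\C/I_\C^2)\to \gr\C$, but the classical Borel/Hopf--Leray structure theorems concern \emph{connected} graded Hopf algebras over perfect fields, whereas $\gr\C$ has the non-connected degree-zero part $C$ and $\Bbbk$ is an arbitrary field of characteristic $\ne 2$. The statement you need is exactly the smash-coproduct description \eqref{eq:cano_isom_grC} of $\gr\C$, i.e.\ \cite[Proposition 4.9~(2)]{M1} --- a non-trivial result proved in the same paper whose Theorem 4.5 is at issue, not an off-the-shelf classical fact; your alternative route through $\op{hy}(\G)$ is equally unsubstantiated. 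Thus both pillars of the proposal (the splitting $\xi$ and the structure of $\gr\C$) are imported results at least as deep as the theorem itself. For what it is worth, the intermediate bookkeeping you do carry out --- that ${}^{coC}\C$ is a finitely generated quotient of $\C$ with ${}^{coC}\C/(({}^{coC}\C)_1)=\Bbbk$, hence finite-dimensional with nilpotent augmentation ideal, and that any lift $s$ of a basis of $\wG$ induces a surjection $\wwG\to {}^{coC}\C$ --- is correct; the proof fails only, but fatally, at the two points above.
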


\begin{rem}\label{rem:tensor_prod_decomp}
(1)\ 
The properties of $\psi$ above implies that  
$(\varepsilon_C\ot \op{id}_{\wwG})\circ \psi: \C \to \wwG$, composed with the projection
$\wwG \to \wedge^0(\wG)=\Bbbk$, coincides with the canonical $\C \to \C_0/\C^+_0=\Bbbk$.
This is the same as saying that $\psi$ is counit-preserving, or explicitly, 
$\varepsilon_{\C}=(\varepsilon_{C}\ot \varepsilon_{\wwG})\circ \psi$. 
 
(2)\ The cited \cite[Theorem 4.5]{M1} states that there exists a counit-preserving isomorphism 
$\psi:\C \os{\simeq}{\longrightarrow} C \ot \wwG$ of left $C$-super-comodule superalgebras.  
The isomorphism constructed in the proof (see \cite[p.301, line 9]{M1}) is seen to have
the stronger property above.  One sees that $C \to \C_0$, $c \mapsto \psi^{-1}(c \otimes 1)$
gives $\xi$ such as in Lemma \ref{lem:CcoC}; it indeed gives rise to the isomorphism 
\eqref{eq:CcoC} by Sweedler's Hopf-Module Theorem \cite[Theorem 4.1.1]{Sw} (or \cite[1.9.4, p.15]{Mo}),
as is shown in {\cite[Footnote 5]{M3}}.  
Note that consequently, we have an isomorphism 
${}^{coC}\C \os{\simeq}{\longrightarrow} \wwG$
of superalgebras. 
\end{rem}


\subsection{The associated graded Hopf superalgebra}\label{subsec:graded_Hopf}
Retain $\G=\S \C$ as above.
Note that the construction of $\gr \A$ in \eqref{eq:grA} gives rise to an endo-functor
$\A \mapsto \gr \A$ on $\ms{SAlg}$ which preserves the tensor product. It then follows 
that 
\[ \gr \C=(\gr \C,\ \gr(\Delta_{\C}),\ \gr(\varepsilon_{\C}),\ \gr(\mathcal{S}_{\C})) \]
is a Hopf superalgebra. 
Notice from Theorem \ref{thm:tensor_prod_decomp} that $\C \simeq \gr \C$ as superalgebras;
but they are not necessarily isomorphic as Hopf superalgebras, see Lemma-Definition \ref{lemdef:graded} below.
Let 
\[ \gr \G=\S(\gr \C) \]
denote the affine algebraic supergroup represented by $\gr \C$. 
One sees that $\gr \C$ includes $C=(\gr\C)(0)$ as a Hopf sub-superalgebra, and 
the associated, quotient Hopf superalgebra $\gr \C/(C^+)$ is $\wwG$; see \cite[Proposition 4.9 (2)]{M1}. 
Let 
\begin{equation}\label{eq:q0q1}
q_0 : \gr \C \to (\gr\C)(0)=C,\quad q_1: \gr \C \to \gr \C/(C^+)=\wwG 
\end{equation}
denote the quotient maps. The right $C$-comodule structure $\wG \to \wG \ot C$ on $\wG$
which corresponds to the left co-adjoint action by $G$ uniquely extends to a right $C$-super-comodule 
superalgebra and super-coalgebra structure
\[ \wwG \to \wwG \ot C. \]
The associated super-coalgebra $C \rcosmash \wwG$ of smash coproduct \cite[p.207]{Mo}, being
the tensor product $C \ot \wwG$ as superalgebra, is a Hopf superalgebra which is canonically
isomorphic to $\gr \C$ through
\begin{equation}\label{eq:cano_isom_grC}
(q_0\ot q_1)\circ \gr(\Delta_{\C}): \gr \C \os{\simeq}{\longrightarrow} C \rcosmash \wwG;
\end{equation}
see \cite[Proposition 4.9 (2)]{M1}, again. 
In terms of supergroups,
the affine supergroup $\gr \G$ include $G$ and
$\S(\wwG)$ as closed sub-supergroups, so that $\S(\wwG)$ is normal, and the product morphism give
a  canonical isomorphism 
$G \ltimes \S(\wwG)\simeq \gr \G$. 

\begin{lemma}\label{lem:pre-canonical}
Choose an isomorphism $\psi: \C \os{\simeq}{\longrightarrow} C \ot \wwG$ such as in 
Theorem \ref{thm:tensor_prod_decomp}. 
Then the associated isomorphism $\gr \psi $ of graded superalgebras coincides with the canonical
isomorphism \eqref{eq:cano_isom_grC}.
\end{lemma}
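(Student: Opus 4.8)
My plan is to compare the two graded-superalgebra isomorphisms $\gr\psi$ and $\kappa:=(q_0\ot q_1)\circ\gr(\Delta_\C)$ of \eqref{eq:cano_isom_grC}, both running from $\gr\C$ onto $C\rcosmash\wwG\,(=C\ot\wwG)$. By the surjection $\wedge_C(I_\C/I_\C^2)\to\gr\C$ recalled in Section \ref{subsec:graded_superalgebra}, the superalgebra $\gr\C$ is generated by its degree-$0$ part $(\gr\C)(0)=C$ together with its degree-$1$ part $(\gr\C)(1)=I_\C/I_\C^2$; since $\gr\psi$ and $\kappa$ are superalgebra maps, it will suffice to check that they agree on these two components. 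The engine for both checks will be the following determination principle: viewing a vector space $V$ as a trivial left $C$-comodule, any left $C$-comodule map $f:\C\to C\ot V$ (for the coaction $\lambda_\C$ on $\C$ and $\Delta_C\ot\op{id}_V$ on the target) is recovered from $g:=(\varepsilon_C\ot\op{id}_V)\circ f$ by $f=(\op{id}_C\ot g)\circ\lambda_\C$, as one sees on applying $\op{id}_C\ot\varepsilon_C\ot\op{id}_V$ to the comodule identity $(\Delta_C\ot\op{id}_V)\circ f=(\op{id}_C\ot f)\circ\lambda_\C$.

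In degree $0$ (the case $V=\Bbbk$) I would show both maps equal $\op{id}_C$. For $\kappa$ this is a direct computation: $q_0$ restricts to the identity on $C=(\gr\C)(0)$, while $q_1$ restricts on $C$ to $\varepsilon_C$ (the ideal $(C^+)$ meets the degree-$0$ part in $C^+$), so $\kappa|_C=(\op{id}_C\ot\varepsilon_C)\circ\Delta_C=\op{id}_C$. For $\gr\psi$, its degree-$0$ part is the map $C=\C/I_\C\to C$ induced by $p:=(\op{id}_C\ot\varepsilon_{\wwG})\circ\psi:\C\to C$. Now $p$ and the Hopf quotient $q:\C\to C$ are both left $C$-comodule algebra maps into $C$ with its regular coaction, and both have $\varepsilon_C$-reduction equal to $\varepsilon_\C$; for $p$ this uses the counit-preserving property $\varepsilon_\C=(\varepsilon_C\ot\varepsilon_{\wwG})\circ\psi$ of Remark \ref{rem:tensor_prod_decomp}(1). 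The determination principle then forces $p=q$, so the induced map $\C/I_\C\to C$ is $\op{id}_C$.

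In degree $1$ one has $(\gr\C)(1)=\C_1/\C_1^3$ (purely odd), and the degree-$1$ part of the target is $C\ot\wG$. Lifting a class to $y\in\C_1$, so that $\lambda_\C(y)\in C\ot\C_1$, I would show that $\gr\psi$ and $\kappa$ both send it to $(\op{id}_C\ot\pi)\circ\lambda_\C(y)$, where $\pi:\C_1\to\wG=\C_1/\C_0^+\C_1$ is the natural projection. For $\kappa$ this follows by tracking $\gr(\Delta_\C)(\bar y)$: the summand $(\gr\C)(1)\ot(\gr\C)(0)$ is annihilated by $q_0\ot q_1$ since $q_0$ kills positive degrees, while on the surviving summand $(\gr\C)(0)\ot(\gr\C)(1)$ the first leg is $q$ and the second is the degree-$1$ part of $q_1$, which is exactly the projection $\C_1/\C_1^3\to\wG$; using $\lambda_\C=(q\ot\op{id})\circ\Delta_\C$ and $q|_{\C_1}=0$ this reproduces $(\op{id}_C\ot\pi)\circ\lambda_\C(y)$. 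For $\gr\psi$, the degree-$1$ part is $\Phi:=(\op{id}_C\ot P_1)\circ\psi:\C\to C\ot\wG$ (with $P_1:\wwG\to\wedge^1(\wG)=\wG$ the projection), a left $C$-comodule map whose $\varepsilon_C$-reduction equals $\pi$ on $\C_1$ by the sharpened property in Theorem \ref{thm:tensor_prod_decomp}. Since $(\op{id}_C\ot\pi)\circ\lambda_\C$ is likewise a left $C$-comodule map with the same reduction $\pi$, and since for $y\in\C_1$ the formula $f=(\op{id}_C\ot g)\circ\lambda_\C$ uses only $g|_{\C_1}$, the determination principle gives $\Phi=(\op{id}_C\ot\pi)\circ\lambda_\C$ on $\C_1$, hence $\gr\psi=\kappa$ on $(\gr\C)(1)$ and therefore everywhere.

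The main obstacle I anticipate is the degree-$1$ matching: one must keep separate the two distinct $\mathbb{N}$-gradings in play—the filtration grading on the source $\gr\C$ coming from the powers of $I_\C$, and the exterior grading on the target $C\ot\wwG$—and correctly identify the degree-$1$ behaviour of $q_0$, $q_1$ and of $P_1\circ\psi$, including which tensor components of $\gr(\Delta_\C)(\bar y)$ survive $q_0\ot q_1$. Once this bookkeeping is done, the comodule determination principle reduces the whole comparison to the single equality of $\varepsilon_C$-reductions $g_{\gr\psi}=g_\kappa=\pi$ on $\C_1$, which is precisely what the explicit form of Theorem \ref{thm:tensor_prod_decomp} provides.
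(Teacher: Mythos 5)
Your proof is correct and is essentially the paper's own argument written out in full: the paper's one-sentence proof rests on exactly the same two ingredients, namely the sharpened property of $\psi$ (its $\varepsilon_C$-reduction agrees with the natural map in degrees $0$ and $1$, so that the associated graded of $(\varepsilon_C\ot \op{id}_{\wwG})\circ\psi$ is $q_1$) and left $C$-colinearity determining a map into $C\ot\wwG$ from its $\varepsilon_C$-reduction. Your degree-by-degree verification together with the generation of $\gr\C$ in degrees $\le 1$ is just the expanded form of that same reasoning.
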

\begin{proof}
This follows since the chosen $\psi$ is such that the graded superalgebra map $\gr \C \to \wwG$
associated with 
$(\varepsilon_C\ot \op{id}_{\wwG})\circ \psi : \C \to \wwG$ is the quotient map $q_1$ given in \eqref{eq:q0q1}. 
\end{proof}


\subsection{Harish-Chandra pairs}\label{subsec:Harsh-Chandra}
A \emph{Harish-Chandra pair} \cite[Section 6.1]{MS}
is a pair $(G,\ms{V})$ of an affine algebraic group $G$ and a right $G$-module $\ms{V}$,
which is equipped with a $G$-equivariant, symmetric bilinear map $[\ , \ ] : \ms{V}\times \ms{V}\to \Lie(G)$ 
such that $v \triangleleft [v,v]=0$ for all $v \in \ms{V}$. Here, $\Lie(G)$ is supposed to be a right $G$-module
by the adjoint action, and $\triangleleft$ indicates the $\Lie(G)$-action on $\ms{V}$ 
induced from the original $G$-action. 

To every affine algebraic supergroup $\G$, a Harish-Chandra pair $(G,\ms{V})$ is naturally assigned,
where $G$ is the associated affine algebraic group, $\ms{V}$ is the $\Lie(\G)_1=\wG^*$
given the right adjoint $G$-action, and $[\ , \ ]$ is the bracket of $\Lie(\G)$ restricted
to $\ms{V}=\Lie(\G)_1$. It is proved by \cite[Theorem 3.2]{M2} (see also \cite[Theorem 6.1]{MS}) 
that the assignment above
gives rise to an equivalence from the category of affine algebraic supergroups to the category of
Harish-Chandra pairs. This is a very useful result,
but it is used in this paper only at the following proof. 
We remark that the category-equivalence above is extended to those algebraic supergroups which are
not necessarily affine, as will be proved in the forthcoming \cite{MZ3}. 

\begin{lemmadef}\label{lemdef:graded}
For an affine algebraic supergroup $\G=\S \C$, the following are equivalent:
\begin{itemize}
\item[(i)] $\G \simeq \gr \G$ as affine supergroups;
\item[(ii)] $\C \simeq \gr \C$ as Hopf superalgebras;
\item[(iii)] The Hopf superalgebra map $q : \C \to C$ splits; 
\item[(iv)] The bracket on $\Lie(\G)$, restricted to $\Lie(\G)_1\times \Lie(\G)_1$, vanishes, or in notation,
$[\Lie(\G)_1,\ \Lie(\G)_1]=0$. 
\end{itemize}
\emph{If these equivalent conditions are satisfied, we say that $\G$ is} graded.
\end{lemmadef}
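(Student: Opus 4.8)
The plan is to close the loop $(\mathrm{i})\Rightarrow(\mathrm{ii})\Rightarrow(\mathrm{iii})\Rightarrow(\mathrm{iv})\Rightarrow(\mathrm{i})$. The equivalence $(\mathrm{i})\Leftrightarrow(\mathrm{ii})$ is formal: $\S$ sets up an anti-equivalence between Hopf superalgebras and affine supergroups, under which an isomorphism $\G\simeq\gr\G=\S(\gr\C)$ of affine supergroups corresponds exactly to an isomorphism $\C\simeq\gr\C$ of Hopf superalgebras. The remaining three implications carry the real content, and I would arrange them so that the passage from the (co)algebraic conditions to the structural one $(\mathrm{iv})\Rightarrow(\mathrm{i})$ is where the Harish-Chandra correspondence of Section \ref{subsec:Harsh-Chandra} enters.

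For $(\mathrm{ii})\Rightarrow(\mathrm{iii})$ I would exploit that the formation of the associated algebra $\A\mapsto A=\A/(\A_1)$ is functorial, so that $q:\C\to C$ and the analogous projection $q_0:\gr\C\to(\gr\C)(0)=C$ are intertwined by any Hopf isomorphism $\theta:\C\os{\simeq}{\to}\gr\C$, namely $q_0\circ\theta=\bar\theta\circ q$ with $\bar\theta$ the induced automorphism of $C$. Since $\gr\C$ contains $C=(\gr\C)(0)$ as a Hopf sub-superalgebra (Section \ref{subsec:graded_Hopf}), the inclusion $C\hookrightarrow\gr\C$ is a Hopf-superalgebra section of $q_0$; conjugating it by $\theta$ and $\bar\theta$ produces a Hopf-superalgebra section of $q$, which is exactly $(\mathrm{iii})$. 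For $(\mathrm{iii})\Rightarrow(\mathrm{iv})$ I would pass to hyper-superalgebras. A Hopf-superalgebra section $\sigma:C\to\C$ of $q$ amounts to a supergroup retraction $\G\to G$ of the closed embedding $G\hookrightarrow\G$, and applying $\op{hy}(-)$ yields a retraction of Hopf superalgebras $r:\op{hy}(\G)\to\op{hy}(G)$ of the natural inclusion $\op{hy}(G)\hookrightarrow\op{hy}(\G)$. Now $r$ is the identity on $\Lie(G)=\Lie(\G)_0$ and kills $\Lie(\G)_1$, since its target $\op{hy}(G)$ is purely even; as $r$ respects the commutator bracket, for $v,w\in\Lie(\G)_1$ we get $[v,w]=r([v,w])=[r(v),r(w)]=0$, which is $(\mathrm{iv})$.

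The implication $(\mathrm{iv})\Rightarrow(\mathrm{i})$ is where I would invoke the equivalence between affine algebraic supergroups and Harish-Chandra pairs. The pair attached to $\G$ is $(G,\ \Lie(\G)_1,\ [\ ,\ ])$ with $\Lie(\G)_1=\wG^{*}$ carrying the adjoint $G$-action. I would compute the pair attached to $\gr\G$: using the canonical isomorphism \eqref{eq:cano_isom_grC} that realizes $\gr\C$ as the smash coproduct $C\rcosmash\wwG$ built from the co-adjoint $G$-action on $\wG$, the associated group of $\gr\G$ is again $G$, and $\Lie(\gr\G)_1\cong\wG^{*}=\Lie(\G)_1$ as $G$-modules. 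Crucially, the odd-odd bracket of $\gr\G$ vanishes, because it is computed inside the normal factor $\S(\wwG)$ of $\gr\G\simeq G\ltimes\S(\wwG)$, whose odd primitives form an abelian Lie superalgebra (every element of $\wG$ being primitive in $\wwG$). Thus the Harish-Chandra pair of $\gr\G$ is $(G,\ \wG^{*},\ 0)$. Under hypothesis $(\mathrm{iv})$ the bracket of $\G$ also vanishes, so $\G$ and $\gr\G$ have identical Harish-Chandra pairs, and the equivalence returns $\G\simeq\gr\G$, i.e. $(\mathrm{i})$.

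I expect the main obstacle to be the computation of the Harish-Chandra pair of $\gr\G$ in $(\mathrm{iv})\Rightarrow(\mathrm{i})$: one must verify carefully, from the smash-coproduct description \eqref{eq:cano_isom_grC} and Lemma \ref{lem:pre-canonical}, both that its $G$-module part reproduces $\wG^{*}$ with the correct co-adjoint action and that its bracket is genuinely zero. The hyper-superalgebra bookkeeping in $(\mathrm{iii})\Rightarrow(\mathrm{iv})$ is secondary but also needs the functoriality of $\op{hy}(-)$ and the identifications $\Lie(\G)_0=\Lie(G)$ and \eqref{eq:Lie1_W} to be handled with care.
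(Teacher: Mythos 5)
Your proposal is correct and takes essentially the same route as the paper's proof: (i)$\Leftrightarrow$(ii) treated as formal, (ii)$\Rightarrow$(iii) by transporting the canonical splitting $C\hookrightarrow \gr\C$ of $q_0$ through the isomorphism $\C\simeq\gr\C$, (iii)$\Rightarrow$(iv) from the resulting parity-preserving retraction onto the even part (the paper phrases this via the split exact sequence $0\to \Lie(\G)_0\to\Lie(\G)\to\Lie(\G)_1\to 0$ of Lie superalgebras rather than via $\op{hy}(-)$, but the content is identical), and (iv)$\Rightarrow$(i)/(ii) via the Harish--Chandra pair equivalence together with the key fact that the pair of $\gr\G$ is that of $\G$ with the bracket replaced by zero. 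The only cosmetic difference is that the paper cites \cite[Section 4.6]{M2} and \cite[Section 4.2]{MS} for this last fact, whereas you propose to verify it directly from the smash-coproduct description \eqref{eq:cano_isom_grC}.
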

\begin{proof}
(i) $\Leftrightarrow$ (ii). \ This is obvious. 

(iv) $\Rightarrow$ (ii). 
This follows from the category-equivalence mentioned above, 
since the Harish-Chandra pair corresponding to $\gr \G$ is obtained from that pair of $\G$, just by
replacing the associated $[\ , \ ] :\Lie(\G)_1\times \Lie(\G)_1 \to \Lie(\G)_0$
with the zero map; see \cite[Section 4.6]{M2}, \cite[Section 4.2]{MS}. 

(ii) $\Rightarrow$ (iii).
Assume (ii).
Then the isomorphism $\C \os{\simeq}{\longrightarrow} \gr \C$, composed with the 
the natural Hopf superalgebra map $\gr(\C) \to \gr(\C)/(\gr(\C)_1)=C$ which obviously splits,
coincides with the composite of $q : \C \to C$  with some automorphism of $C$. This shows (iii). 

(iii) $\Rightarrow$ (iv).
Assume (iii). Since we then have the split exact sequence 
$0\to \Lie(\G)_0\to \Lie(\G)\to \Lie(\G)_1 \to 0$ of Lie superalgebras,
(iv) follows.  
\end{proof}


\section{The main theorem and its consequences}\label{sec:G/H}

This section is the main body of the paper.
Throughout, $\G=\S \C$ denotes an affine algebraic supergroup which includes 
a closed sub-supergroup $\H=\S \D$. 


\subsection{The key construction of open embeddings}\label{subsec:open_embed}
We have the closed embedding and the associated
surjection of Hopf superalgebras 
\begin{equation}\label{eq:GHCD}
\G \supset \H,\quad \C \to \D. 
\end{equation}
Since $\H$ acts freely on $\G$ by the right multiplication we can and we will discuss the quotient superscheme
$\G/\H$ and the faisceau $\G \tilde{/}\H$.  The results which
we are going to obtain for these have the obvious, opposite-sided analogues for $\H\, \backslash \G$ or $\H\, \tilde{\backslash} \G$,
which hold true, indeed. 

The second map in \eqref{eq:GHCD} induces a linear surjection
\begin{equation*}
\wG \to \wH.
\end{equation*} 
The kernel is denoted by
\begin{equation}\label{eq:Z}
\z=\z^{\G}_{\H}:= \mr{Ker}(\wG \to \wH).
\end{equation} 
Let $G=\S C$ and $H=\S D$ denote the affine algebraic groups associated with $\G$, $\H$, respectively.
We thus have 
\[C =\C/(\C_1),\quad D =\D/(\D_1).\]  
The embedding and the surjection in \eqref{eq:GHCD} induce
a closed embedding of affine algebraic groups and a Hopf-algebra surjection
\begin{equation}\label{eq:ghcd}
G \supset H,\quad C \to D.
\end{equation}
 
Here is a classical result; see \cite[Part I, Sections 5.6--5.7]{J}, for example.
There exists a (necessarily, unique) quotient scheme $G/H$, which is Noetherian, and
represents the faisceau $G\tilde{/}H$. 
The canonical morphism of schemes
\begin{equation*}\label{eq:pi}
\pi : G \to G/H
\end{equation*}
is affine, faithfully flat and finitely presented \cite[Part I, Section 5.7, (1)]{J}. 
Choose arbitrarily a non-empty affine open subscheme $U \subset G/H$. 
Then $\pi^{-1}(U)$ is an $H$-stable affine open subscheme of $G$ such that
\begin{equation*}
\pi^{-1}(U)/H=U,
\end{equation*}
and this represents the faisceau $\pi^{-1}(U)\tilde{/}H$. 
Suppose that $\pi^{-1}(U)=\S A$ and $U=\S B$. Then $A$ is fppf over $B$,
and $A\supset B$ is a $D$-Galois extension; see Definition \ref{def:Galois}.  

By the definition \eqref{eq:Z} we have the short exact sequence 
\begin{equation}\label{eq:ZWW}
0 \to \z\to \wG\to \wH\to 0
\end{equation}
of right $D$-comodules.

\begin{lemma}\label{lem:AZ}
The tensor product $A \ot \z$ of right $D$-comodules, given the obvious multiplication by $A$, 
turns into an object of $\SM_A^{D}$.  This naturally gives rise to the right $D$-super-comodule superalgebra
\[ A \otimes \zz\,  (=\wedge_A(A\ot \z)) \]
over $A$; notice from Remark \ref{rem:Schneider} (2) 
that such a superalgebra is precisely an algebra object of the tensor category $\SM_A^{D}$.
These $A \ot \z$ and $A \otimes \zz$ are injective as right $D$-comodules.
\end{lemma}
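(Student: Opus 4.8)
The plan is to establish the three assertions in order, working throughout with the classical $D$-Galois extension $A \supset B$ arising from $\pi^{-1}(U)/H = U$. Three inputs will be used repeatedly. From Theorem \ref{thm:superOberst}(1): $A$ is injective as a right $D$-comodule (condition (ii)(a)), and the functors $N \mapsto N\ot_B A$ and $M \mapsto M^{co D}$ are mutually quasi-inverse tensor equivalences between $\SM_B$ and $\SM_A^D$ (condition (iv) together with Remark \ref{rem:Schneider}(2)). The third input is that $\z$ is finite-dimensional, being a subspace of the finite-dimensional cotangent space $T^*_{\varepsilon}(\G)$.

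For the first two assertions I would argue purely formally. Since $D$ is an ordinary, commutative Hopf algebra, its right comodules form a symmetric monoidal category, so the tensor product $A \ot \z$ of the right $D$-comodules $A$ and $\z$ carries the codiagonal coaction $a \ot z \mapsto \sum a_{(0)} \ot z_{(0)} \ot a_{(1)} z_{(1)}$. Equipping $A \ot \z$ with the obvious $A$-module structure on the first tensorand, the Hopf-module identity $\rho_{A\ot\z}((a\ot z)\, a') = \rho_{A\ot\z}(a\ot z)\, \rho_A(a')$ is verified in one line, using only that $\rho_A$ is an algebra map and that $D$ is commutative; hence $A \ot \z \in \SM_A^D$. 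As $\z$ is purely odd, its exterior algebra $\zz$ is a right $D$-super-comodule superalgebra (the coaction induced degreewise from that of $\z$), and tensoring with the $D$-comodule algebra $A$ produces the right $D$-super-comodule superalgebra $A\ot\zz$, which as an $A$-algebra coincides with $\wedge_A(A\ot\z)$; by Remark \ref{rem:Schneider}(2) this is exactly an algebra object of the tensor category $\SM_A^D$.

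The substantive assertion is injectivity. I would first prove the general principle that any $M \in \SM_A^D$ which is finitely generated and free as an $A$-module is injective as a right $D$-comodule, and then apply it to $M = A\ot\z$ and $M = A\ot\zz$, both of which are finitely generated free over $A$ because $\z$, and hence $\zz$, is finite-dimensional. For the principle, put $N = M^{co D}$, so that $M \cong N\ot_B A$ in $\SM_A^D$. Since $N \ot_B A = M$ is finitely generated projective (indeed free) over $A$ and $A$ is faithfully flat over $B$, faithfully flat descent of projectivity shows that $N$ is finitely generated projective over $B$; choosing $N'$ with $N \oplus N' \cong B^n$ and applying the additive functor $-\ot_B A$ realizes $M$ as a $D$-comodule direct summand of $A^n$. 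As $A$ is injective as a right $D$-comodule, and finite direct sums and direct summands of injectives are injective, $M$ is injective, as claimed.

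The main obstacle is precisely this last step: the key move is to convert the desired injectivity into a statement about the underlying $A$-module by passing through the Galois equivalence, so that the finiteness of $\z$ together with faithfully flat descent reduces everything to the already-known injectivity of $A$ itself. A more hands-on alternative would establish coflatness directly via the interchange $(N\ot_B A)\,\square_D\, L \cong N\ot_B(A\,\square_D\, L)$, valid for $B$-flat $N$ and any left $D$-comodule $L$, but the direct-summand argument above is shorter and cleaner.
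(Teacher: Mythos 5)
Your proof is correct, but the route you take to injectivity is genuinely different from the paper's. The paper treats the first two assertions exactly as you do (it calls them obvious) and then disposes of injectivity in one line: since the right $D$-comodule $A$ is injective, so are $A\ot\z$ and $A\ot\zz$, because an injective $D$-comodule tensored, with the codiagonal coaction, with an \emph{arbitrary} $D$-comodule is again injective; this standard fact is cited from \cite[Part I, Section 3.10, Proposition c)]{J}. That argument needs neither the $D$-Galois property of $A\supset B$ nor the finite-dimensionality of $\z$. Your argument instead invokes the category-equivalence $\SM_B\approx\SM_A^{D}$ of Theorem \ref{thm:superOberst} (1) (iv) together with faithfully flat descent of finite projectivity, so as to realize $A\ot\z$ and $A\ot\zz$ as $D$-comodule direct summands of finite free $A$-modules; this is legitimate---there is no circularity, since the $D$-Galois property of $A\supset B$ comes from the classical theory and is established before the lemma---but it is heavier machinery, and it genuinely uses $\dim\z<\infty$. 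What your route buys: the descent step is precisely the argument the paper deploys afterwards, in the proof of Proposition \ref{prop:AandB} (2), to show that $\B(1)=(A\ot\z)^{coD}$ is a finitely generated projective $B$-module, so you have in effect proved that statement en route. What the paper's route buys: Lemma \ref{lem:AZ} stays independent of Theorem \ref{thm:superOberst} and of any finiteness hypothesis, resting only on the injectivity of $A$ as a $D$-comodule and a general comodule-theoretic fact.
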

\begin{proof}
The assertions are obvious except the last. The last assertion follows since the right $D$-comodule
$A$ is injective. Indeed, 
an injective $D$-comodule tensored with any $D$-comodule is injective; see \cite[Part I, Section 3.10, 
Proposition c)]{J}. 
\end{proof}

Define superalgebras by
\begin{equation}\label{eq:AandB}
\A = (A \otimes \zz)\, \square_D\, \D,\quad \B= (A \otimes \zz)^{co D}.  
\end{equation}

\begin{prop}\label{prop:AandB}
We have the following.
\begin{itemize}
\item[(1)] $\A$ is naturally a right $\D$-super-comodule superalgebra such that
\begin{equation}\label{eq:AcoD}
\A^{co\D}= \B.
\end{equation}
Moreover, $\A$ is finitely generated as a superalgebra, and is injective as a $\D$-super-comodule. 
\item[(2)]
$\B$ is a graded subalgebra of $A \otimes \zz$, whose $0$-th component $\B(0)$ is $B$. The
first component 
\[ \B(1)=(A\ot \z)^{coD} \]
is a finitely generated projective $B$-module of constant rank $\dim \z$. The 
graded $B$-algebra map
\begin{equation}\label{eq:wedgeB(1)B}
\wedge_B(\B(1))\to \B 
\end{equation}
induced from the inclusion $\B(1)\subset \B$ is an isomorphism. Moreover, $\B$ is Noetherian. 
\end{itemize}
\end{prop}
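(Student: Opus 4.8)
The plan is to deduce everything from two structural inputs applied to the \emph{even} datum: the tensor-product decomposition of $\D$ (Theorem~\ref{thm:tensor_prod_decomp} for $\H$) and the monoidal Hopf-module equivalence attached to the even $D$-Galois extension $A \supset B$ (Theorem~\ref{thm:superOberst}(1), condition (iv), which applies since $A\supset B$ is given to be $D$-Galois). I emphasize that only the even Galois extension $A\supset B$ is used, so there is no circularity with the later $\D$-Galois property of $\A\supset\B$. For the comodule-superalgebra structure in Part (1), note that $\A=(A\ot\zz)\,\square_D\,\D$ sits inside $(A\ot\zz)\ot\D$; the left $D$-coaction and the right $\D$-coaction on $\D$ commute by coassociativity, and the two maps defining the cotensor equalizer are both algebra maps, so $\A$ is a sub-superalgebra stable under $\op{id}\ot\Delta_\D$ and thereby a right $\D$-super-comodule superalgebra. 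To establish \eqref{eq:AcoD} I would use that the $\D$-coinvariants of the right regular coaction are just $\Bbbk\cdot 1$: a coinvariant element of $\A$ lies in $(A\ot\zz)\ot\Bbbk1$, and, writing it as $m\ot 1$, the cotensor condition forces $m\in(A\ot\zz)^{coD}=\B$; thus $b\mapsto b\ot1$ gives $\A^{co\D}\cong\B$.

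For finite generation I would transport the cotensor product through the isomorphism $\psi\colon\D\xrightarrow{\sim}D\ot\wwH$ of left $D$-super-comodule superalgebras from Theorem~\ref{thm:tensor_prod_decomp}. Since the $\wwH$-factor is inert under the left $D$-coaction, the cotensor collapses, giving $\A\cong(A\ot\zz)\,\square_D\,(D\ot\wwH)\cong\big((A\ot\zz)\,\square_D\,D\big)\ot\wwH\cong(A\ot\zz)\ot\wwH$ as superalgebras, the last step being the counit isomorphism $M\,\square_D\,D\cong M$. As $A$ is a finitely generated $\Bbbk$-algebra (it is fppf over the finitely generated $B$) and $\zz,\wwH$ are finite-dimensional, $\A$ is finitely generated. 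For injectivity I would invoke the adjunction in which the restriction functor from right $\D$-super-comodules to right $D$-super-comodules along $\D\to D$ is exact and left adjoint to $-\,\square_D\,\D$; a right adjoint of an exact functor preserves injectives, so since $A\ot\zz$ is injective as a $D$-super-comodule (Lemma~\ref{lem:AZ}), $\A=(A\ot\zz)\,\square_D\,\D$ is injective as a $\D$-super-comodule.

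For Part (2), the exterior grading of $\zz=\wedge(\z)$ makes $A\ot\zz=\bigoplus_n A\ot\wedge^n(\z)$ a graded superalgebra whose grading is preserved by the $D$-coaction, so $\B=(A\ot\zz)^{coD}$ is a graded subalgebra with $\B(0)=A^{coD}=B$ and $\B(1)=(A\ot\z)^{coD}$. The engine is the monoidal equivalence $\SM_B\simeq\SM_A^{D}$ (Remark~\ref{rem:Schneider}(2)), whose counit furnishes, for any $M\in\SM_A^{D}$, an isomorphism $M^{coD}\ot_B A\xrightarrow{\sim}M$, which is an algebra isomorphism when $M$ is an algebra object. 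Applying this to $M=A\ot\z$ yields $\B(1)\ot_B A\cong A\ot\z\cong A^{\oplus\dim\z}$; faithfully flat descent along $B\to A$ then shows $\B(1)$ is finitely generated projective over $B$, of constant rank $\dim\z$ since its base change is free of that rank and $\S A\to\S B$ is surjective. For the isomorphism \eqref{eq:wedgeB(1)B} I would base-change the natural graded map $\wedge_B(\B(1))\to\B$ along $B\to A$: the source becomes $\wedge_A(\B(1)\ot_B A)\cong A\ot\zz$, while the counit applied to the algebra object $M=A\ot\zz$ identifies the target $\B\ot_B A$ with $A\ot\zz$; tracking degree $1$ shows the base-changed map is the canonical isomorphism, so by faithful flatness \eqref{eq:wedgeB(1)B} is an isomorphism. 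Finally $\B\cong\wedge_B(\B(1))$ with $B$ Noetherian and $\B(1)$ finitely generated over $B$ is Noetherian by the criteria of Section~\ref{subsec:Noetherian_superalgebra}.

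I expect the delicate point to be the injectivity in Part (1): one must correctly set up the restriction/coinduction adjunction for the Hopf-superalgebra surjection $\D\to D$ and verify that the principle ``a right adjoint of an exact functor preserves injectives'' applies in the super-comodule setting, which is legitimate because super-injectivity coincides with ordinary injectivity (Section~\ref{subsec:super_vs_non-super}). Part (2) is comparatively routine once the monoidal equivalence is in hand, the only genuine care being the compatibility of the exterior gradings with the $D$-coaction and the identification, in degree one, of the base-changed comparison map.
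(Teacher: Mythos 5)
Your proposal is correct and follows essentially the same route as the paper: the cotensor computation $\A^{co\D}=(A\ot\zz)\,\square_D\,\Bbbk=\B$, the decomposition $\D\simeq D\ot\wwH$ (the paper's Lemma \ref{lem:CcoC}, which is derived from Theorem \ref{thm:tensor_prod_decomp}) for finite generation, coinduction preserving injectivity, and in Part (2) the Hopf-module equivalence $\SM_B\approx\SM_A^D$ coming from the even $D$-Galois extension $A\supset B$, combined with faithfully flat descent along $B\to A$. The only cosmetic difference is that you justify the injectivity step by the corestriction--coinduction adjunction, whereas the paper uses the characterization of injective $\D$-super-comodules as direct summands of direct sums of copies of $\D$; both are standard and equivalent here.
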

\begin{proof}
(1)\ The first assertion easily follows once one sees
\[
\A^{co\D}= (A \otimes \zz)\, \square_D\, \D\, \square_{\D}\, \Bbbk=(A \otimes \zz)\, \square_D \, \Bbbk= \B.
\]

By Lemma \ref{lem:CcoC} applied to $\D$ we see that the inclusion 
$A \ot \zz \ot {}^{coD}\D \hookrightarrow A \ot \zz \ot \D$
gives the canonical isomorphism 
\begin{equation}\label{eq:AzcoDD}
A \ot \zz \ot {}^{coD}\D = \A.
\end{equation}
Since ${}^{coD}\D\, (\simeq \wwH)$ is finite-dimensional, $\A$ is finitely generated. 

Note that a $\D$-(super-)comodule is injective if and only if it is a direct summand of the direct sum of some copies of $\D$;
see Section \ref{subsec:super_vs_non-super}. 
One then sees that $\A$ is $\D$-injective since $A \otimes \zz$ is $D$-injective by the previous lemma.

(2)\ The first assertion is easy to see. Since $A\supset B$ is a $D$-Galois extension (Definition \ref{def:Galois}),
we have by Theorem \ref{thm:superOberst} (1) the category-equivalence $\SM_A^D \approx \SM_B$. 
This shows that the $A$-actions on $A \otimes \zz$, and on its
first component $A \otimes \z$ give isomorphisms 
\[ A \otimes_B \B \os{\simeq}{\tto} A \ot \zz,\quad A \ot_B \B(1) \os{\simeq}{\tto} A \ot \z \]
in $\SM_A^D$. Since $B \to A$ is faithfully flat, the second isomorphism shows
that $\B(1)$ is a $B$-module such as claimed above. 
The result implies that the first isomorphism above,
composed with the base extension of \eqref{eq:wedgeB(1)B} along $B \to A$, is an isomorphism.
Again by the faithful flatness, the map \eqref{eq:wedgeB(1)B} is an isomorphism.
It follows that $\B$ is Noetherian since $B$ is. 
\end{proof}

We will see in the proof of Corollary \ref{cor:quotient_local} that $\A \supset \B$ is 
a $\D$-Galois extension. 

\begin{rem}\label{rem:alternativeB}
Here are two alternative ways of describing $\B(1)$. 
\begin{itemize}
\item[(1)]
Given a right $D$-comodule, one has a left $D$-comodule, twisting the side of the coaction
through the antipode of $D$. Applied to $\z$, the resulting left $D$-coaction on $\z$ is what
corresponds to the right co-adjoint action by $H$; see Section \ref{subsec:Hopf}. 
Regrading $\z$ thus as a left $D$-comodule, we have
the alternative description
\begin{equation}\label{eq:alternativeB1}
\B(1)=A\, \square_D\, \z, 
\end{equation}
which will be often used. By Proposition \ref{eq:AandB} (2) we have
\begin{equation}\label{eq:B}
\B \simeq \wedge_B(A \square_D\, \z).
\end{equation}
\item[(2)]
By \eqref{eq:Lie1_W} the dual $\z^*$ of $\z$ is the quotient vector space 
$\op{Lie}(\G)_1/\op{Lie}(\H)_1$, on which $H$ acts by adjoint from the left. We see that
$\B(1)$ is identified so as
\begin{equation}\label{eq:alteranativeB2}
\B(1)=\ms{Comod}^D(\z^*, A)\, (=\ms{Mod}_H(\z^*, A))
\end{equation}
with the vector space of right $D$-comodule (or left $H$-module) maps.
\end{itemize}
\end{rem}

We regard the tensor product $A \ot \wG$ of right $D$-comodules 
as a purely odd object of $\SM_A^D$, with respect to the obvious multiplication by $A$; 
this then includes $A \ot \z$ as a sub-object. 

\begin{lemma}\label{lem:split}
The inclusion $A \ot \z \hookrightarrow A \ot \wG$ splits in $\SM_A^D$. 
\end{lemma}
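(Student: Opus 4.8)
The plan is to reduce the splitting assertion to a statement about projective $B$-modules, exploiting the category-equivalence furnished by the $D$-Galois extension $A \supset B$. First I would tensor the short exact sequence \eqref{eq:ZWW} of right $D$-comodules with $A$ over $\Bbbk$. Since $A$ is purely even while $\z, \wG, \wH$ are purely odd, this produces a short exact sequence of purely odd objects of $\SM_A^D$,
\begin{equation*}
0 \to A \ot \z \to A \ot \wG \to A \ot \wH \to 0,
\end{equation*}
whose first map is precisely the inclusion in question.

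Next I would invoke Theorem \ref{thm:superOberst}~(1), which applies because $A \supset B$ is a $D$-Galois extension (Definition \ref{def:Galois}): the coinvariants functor $M \mapsto M^{co D}$ of \eqref{eq:coinv_functor} is an equivalence $\SM_A^D \approx \SM_B$, and in particular is exact. Applying it, and identifying $(A \ot V)^{co D} = A\, \square_D\, V$ with the antipode-twisting convention of Remark \ref{rem:alternativeB}~(1), I obtain a short exact sequence in $\SM_B$,
\begin{equation*}
0 \to A\, \square_D\, \z \to A\, \square_D\, \wG \to A\, \square_D\, \wH \to 0.
\end{equation*}

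By the argument already carried out for $\B(1)$ in Proposition \ref{prop:AandB}~(2), each of these three terms is a finitely generated projective $B$-module: after the faithfully flat base extension along $B \to A$, the functor $A \ot_B (-)$ carries $A\, \square_D\, V$ isomorphically onto $A \ot V$, which is $A$-free of rank $\dim V$, so finite projectivity descends to $B$. The decisive consequence is that the quotient term $A\, \square_D\, \wH$ is projective over $B$, whence the displayed short exact sequence in $\SM_B$ splits. Transporting the resulting $B$-linear retraction back through the equivalence $\SM_B \approx \SM_A^D$ yields a retraction of $A \ot \z \hookrightarrow A \ot \wG$ in $\SM_A^D$, which is the assertion.

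The only point requiring genuine care—though it is routine given Proposition \ref{prop:AandB}~(2)—is the projectivity of $A\, \square_D\, \wH$ over $B$, since that is exactly what forces the sequence in $\SM_B$ to split; once this descent step is in hand, the remainder is the formal transport of an exact sequence across the Galois equivalence, and I anticipate no further obstacle.
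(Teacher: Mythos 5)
Your proof is correct, but it takes a genuinely different route from the paper's. The paper argues via a relative Maschke-type theorem: since $A$ is injective as a $D$-comodule, the unit $\Bbbk \to A$ extends to a $D$-colinear total integral $\eta : D \to A$ with $\eta(1)=1$, and then Doi's theorem (\cite[Theorem 1]{D}) says that a short exact sequence in $\SM_A^D$ splits as soon as it splits $A$-linearly — which the inclusion $A \ot \z \hookrightarrow A \ot \wG$ obviously does, via any linear retraction $\wG \to \z$. You instead complete the inclusion to the short exact sequence $0 \to A \ot \z \to A \ot \wG \to A \ot \wH \to 0$ in $\SM_A^D$, pass through the Galois equivalence $\SM_A^D \approx \SM_B$ (exact, being an equivalence), use faithfully flat descent along $B \to A$ to see that all three coinvariant modules — in particular the quotient $A\, \square_D\, \wH$ — are finitely generated projective over $B$, split the resulting sequence in $\SM_B$, and transport the retraction back; each step is sound (purity of parity makes the $B$-linear splitting automatically a supermodule splitting, and the quasi-inverse natural isomorphism identifies the transported retraction with a retraction of the original inclusion). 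What the paper's approach buys, beyond brevity, is an \emph{explicit} retraction built from $\eta$ (Remark \ref{rem:compatible}~(1)), whose form is exploited later: the retractions for varying affine opens can be chosen compatibly with the restriction maps $A \to A'$ by choosing $\eta$ compatibly (Remark \ref{rem:compatible}~(2)), which is what the gluing argument in Theorem \ref{mainthm} uses. Your argument establishes existence just as well, but the retraction it produces is non-explicit, so it would not by itself substitute for that later compatibility discussion.
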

\begin{proof}
Since $A$ is $D$-injective, the unit map $\Bbbk \to A$, which is $D$-colinear, extends to a $D$-comodule
map
\begin{equation}\label{eq:eta}
\eta : D \to A,
\end{equation}
which thus satisfies $\eta(1)=1$. It follows by \cite[Theorem 1]{D} that 
a short exact sequence in $\SM_A^D$ splits if it splits $A$-linearly. 
Since obviously, the inclusion in question splits $A$-linearly, it splits in $\SM_A^D$, as desired.
See the following remark for the explicit retraction constructed from $\eta$. 
\end{proof}

\begin{rem}\label{rem:compatible}
(1)\ 
Let $\eta : D \to A$ be as in \eqref{eq:eta}.
Given an object $M$ of $\SM_A^D$, let 
$M \to M\ot D$, $m\mapsto m_{(0)}\ot m_{(1)}$ denote the $D$-comodule structure map.
This is a monomorphism in $\SM_A^D$, and its retraction is given by
\[ \sigma_M : M \ot D \to M, \ \sigma_M(m \ot d) = m_{(0)}\eta(\mathcal{S}_D(m_{(1)})d), \]
where $\mathcal{S}_D$ denotes the antipode as in \eqref{eq:Hopf_struc}; see \cite[Page 100, line --1]{D}.  
One sees easily that a retraction of 
the inclusion $A \ot \z \hookrightarrow A \ot \wG$ above is given by
the composite
\begin{equation}\label{eq:retraction}
A \ot \wG \to (A \ot \wG) \ot D \os{\mr{id} \ot r\ot \mr{id}}{\tto} 
(A \ot \z)\ot D \os{\sigma_{\hspace{-1mm}A\ot \z}}{\tto} A \ot \z, 
\end{equation}
where the first arrow is the $D$-comodule structure map on $A \ot \wG$, 
and $r : \wG \to  \z$ is an arbitrarily
chosen, linear retraction of the inclusion $\z \hookrightarrow \wG$. 

(2)\
Let $(\O_G(\pi^{-1}(U))=)\, A \to A'=\O_G(\pi^{-1}(U'))$ be the restriction map
associated with $\pi^{-1}(U) \supset \pi^{-1}(U')$, 
where $U'$ is any non-empty affine open subscheme of $G/H$ included in $U$.
It then follows that retractions such as above can be chosen so as to be compatible 
with $A \to A'$, since $\eta$ can be so chosen. 
\end{rem}

Let us choose a retraction in $\SM_A^D$
\begin{equation}\label{eq:theta}
\theta : A \ot \wG \to  A \ot \z
\end{equation}
of the inclusion $A \ot \z \hookrightarrow A \ot \wG$; it may not be  
such as above that was constructed from some $\eta$. 

Recall that $A$ is an (algebra) object of $\SM_A^D$.
We regard $\wG \ot A$ as such an object with respect to the
structure possessed by the tensor factor $A$. 
Recall that $\pi^{-1}(U)=\S A$ is an affine open subset of $G=\S C$. Let
$\iota : C \to A$ is the algebra map associated with $G \supset \pi^{-1}(U)$. 

\begin{lemma}\label{lem:kappa}
The map
\begin{equation}\label{eq:kappa}
\kappa=\kappa_A : A \ot \wG\to \wG \ot A,\quad \kappa(a \ot w) =w_{(0)}\ot a\, \iota (w_{(1)})
\end{equation}
is an isomorphism in $\SM_A^D$, where
$w \mapsto w_{(0)}\ot w_{(1)}$ indicates the right $C$-comodule structure map
$\wG \to \wG \ot C$. 
\end{lemma}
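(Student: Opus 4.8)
The plan is to exhibit an explicit two-sided inverse for $\kappa$ and then to verify separately that $\kappa$ is right $A$-linear and right $D$-colinear; since a bijective morphism in the abelian category $\SM_A^D$ is automatically an isomorphism, this will suffice. Throughout I write $\rho(w)=\sum w_{(0)}\ot w_{(1)}$ for the right $C$-comodule structure of $\wG$ (the left co-adjoint coaction, so that each $w_{(1)}$ lies in the purely even $C$), and $\rho_A(a)=\sum a_{(0)}\ot a_{(1)}$ for the right $D$-comodule algebra structure of $A$; the $D$-comodule structure of $\wG$ is then $w\mapsto\sum w_{(0)}\ot q(w_{(1)})$, where $q:C\to D$ is the quotient map. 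The one geometric input I shall use is that the open immersion $\pi^{-1}(U)\hookrightarrow G$ is $H$-equivariant; on coordinate rings this says exactly that $\iota$ is $D$-colinear, i.e. $\rho_A\circ\iota=(\iota\ot q)\circ\Delta_C$.

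First I would introduce
\[ \kappa' : \wG\ot A \to A\ot\wG,\qquad \kappa'(w\ot a)=\textstyle\sum a\,\iota(\mathcal{S}_C(w_{(1)}))\ot w_{(0)}, \]
and check $\kappa'\circ\kappa=\op{id}$ and $\kappa\circ\kappa'=\op{id}$. Both are the classical fundamental-theorem-of-Hopf-modules computation transported along $\iota$: after a single use of comodule coassociativity, passing to the iterated coaction $\sum w_{(0)}\ot w_{(1)}\ot w_{(2)}$, each composite produces a factor $\iota\big(w_{(2)}\mathcal{S}_C(w_{(1)})\big)$ (resp. $\iota\big(\mathcal{S}_C(w_{(2)})w_{(1)}\big)$), which---because $C$ is \emph{commutative}---collapses by the antipode and counit axioms to the identity. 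No signs intervene, as every $w_{(i)}$ with $i\ge 1$ is even.

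Next I would verify that $\kappa$ is a morphism in $\SM_A^D$. Right $A$-linearity is immediate: $A$ multiplies the distinguished tensorand by the obvious sign-free rule in each case, $(a\ot w)a''=aa''\ot w$ and $(w\ot a)a''=w\ot aa''$, so that $\kappa((a\ot w)a'')=\sum w_{(0)}\ot aa''\iota(w_{(1)})=\kappa(a\ot w)\,a''$, because $\iota(w_{(1)})\in A_0$ is central. For $D$-colinearity I would expand both $\rho_{\wG\ot A}\circ\kappa$ and $(\kappa\ot\op{id}_D)\circ\rho_{A\ot\wG}$; since $D$ is purely even, the diagonal coaction on $A\ot\wG$ carries no braiding signs. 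Using $\rho_A(a\iota(w_{(1)}))=\rho_A(a)\,\rho_A(\iota(w_{(1)}))$ together with the compatibility $\rho_A\circ\iota=(\iota\ot q)\circ\Delta_C$, and then one application of coassociativity on the $\wG$-factor, both expressions reduce to $\sum w_{(0)}\ot a_{(0)}\iota(w_{(1)})\ot a_{(1)}q(w_{(2)})$, hence coincide.

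The subtle point---and the place to proceed with care---is precisely this $D$-colinearity, because the two coactions on $\wG$ must be kept strictly apart: $\kappa$ is \emph{defined} through the $C$-coaction (so that $\iota(w_{(1)})$ makes sense in $A$), while colinearity is tested against the $D$-coaction obtained from it by applying $q$. The computation closes because $q$ is a coalgebra map and $\iota$ intertwines the $C$- and $D$-structures exactly as recorded above, so that the single coassociativity step matches on the two sides. Granting these verifications, $\kappa$ is a bijective morphism of $(D,A)$-Hopf supermodules with inverse $\kappa'$, hence an isomorphism in $\SM_A^D$.
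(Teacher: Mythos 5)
Your construction of $\kappa'$ and the verification that it is a two-sided inverse is exactly the paper's proof --- the paper's entire argument is the one line exhibiting this inverse (with $\mathcal{S}_D$ there evidently a slip for $\mathcal{S}_C$, which you write correctly) --- and your observation that the collapse of $\iota\bigl(w_{(2)}\mathcal{S}_C(w_{(1)})\bigr)$ requires the commutativity of $C$ is correct and worth making explicit. The right $A$-linearity check is also fine.

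The gap is in the $D$-colinearity step, and it is not cosmetic: you never say which object of $\SM_A^D$ the target $\wG\ot A$ is, and that is the real content of the lemma. The paper fixes this in the sentences immediately preceding the statement: $\wG\ot A$ is regarded as an object of $\SM_A^D$ with respect to the structure possessed by the tensor factor $A$ \emph{alone}, i.e. $\rho_{\wG\ot A}=\op{id}_{\wG}\ot\rho_A$; it is emphatically \emph{not} given the codiagonal coaction. Your setup --- introducing the pushed-forward $D$-coaction on $\wG$ and saying colinearity ``is tested against'' it, in parallel with your explicit mention of the diagonal coaction on $A\ot\wG$ --- reads most naturally as codiagonal on both sides, and under that reading the lemma is false: computed codiagonally, $\rho_{\wG\ot A}\circ\kappa(a\ot w)=\sum w_{(0)}\ot a_{(0)}\iota(w_{(2)})\ot q(w_{(1)})\,a_{(1)}\,q(w_{(3)})$, which carries an extra sandwich $q(w_{(1)})\cdots q(w_{(3)})$ that your claimed common value $\sum w_{(0)}\ot a_{(0)}\iota(w_{(1)})\ot a_{(1)}q(w_{(2)})$ lacks. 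This does not cancel in general: take $G=H=\mathbb{G}_m$, $A=C=D=\Bbbk[t,t^{-1}]$, $q=\op{id}$, and $\wG$ one-dimensional of co-adjoint weight $n\ne 0$; the two sides then differ by the factor $t^{n}$. Your claimed common value is precisely what $\rho_{\wG\ot A}\circ\kappa$ equals for the paper's structure $\op{id}_{\wG}\ot\rho_A$ --- indeed the whole point of $\kappa$ is that it untwists the coaction off the $\wG$-factor, which is what later yields $(\z\ot A)^{coD}=\z\ot B$ in Remark \ref{rem:C-costable}. So your computation closes only under the correct (but unstated) choice of target structure; as written, one cannot tell whether you verified the true statement or the false one, and the proof must begin by declaring $\rho_{\wG\ot A}=\op{id}_{\wG}\ot\rho_A$.
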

\begin{proof}
Indeed,
$w \ot a \mapsto a\, \iota(\mathcal{S}_D(w_{(1)}))\ot w_{(0)}$ gives an inverse. 
\end{proof}

\begin{rem}\label{rem:C-costable}
Assume that $\z$ is a $C$-subcomodule (or equivalently, a $G$-submodule) of $\wG$; 
by \cite[Lemma 3.5]{MZ2} this is satisfied, if $\H$ is normal in $\G$, or namely, if 
for every $\mathbb{T}\in \ms{SAlg}$, $\H(\mathbb{T})$ 
is normal in $\G(\mathbb{T})$.
Since the restriction $\kappa|_{A \ot \z}$ of $\kappa$ to $A \ot \z$ 
then maps isomorphically onto $\z \ot A$, we have
\begin{equation*}\label{eq:B(1)}
\B(1) \simeq \z \ot A^{coD}=\z \ot B,
\end{equation*}
so that $\B(1)$ is a free $B$-module of rank $\dim \z$; cf. \eqref{eq:B(1)}. 
\end{rem}

Let 
\begin{equation}\label{eq:theta'}
\theta' : \wG \ot A \os{\kappa^{-1}}{\tto} A\ot \wG \os{\theta}{\tto}  A \ot \z
\end{equation}
be the composite of $\kappa^{-1}$ with the $\theta$ chosen before. 
This is thus a retraction of $\kappa|_{A\ot \z} : A \ot \z \to \wG \ot A$ in $\SM_A^D$. 
There arises the $A$-algebra morphism 
\[ \wedge(\theta'): \wwG \ot A \to A \ot \zz \]
in the tensor category $\SM_A^D$ (see Remark \ref{rem:Schneider} (2)), which is a retraction of $\wedge(\kappa|_{A \ot \z})$. 
Essentially by Theorem \ref{thm:tensor_prod_decomp} we can choose an isomorphism
\[ \psi': \C \os{\simeq}{\tto} \wwG \ot C \]
with the analogous, opposite-sided properties to those ones
which $\psi : \C \os{\simeq}{\tto} C\ot \wwG$ such as in \eqref{eq:psi} has. 
We define
\begin{equation}\label{eq:omega}
\omega_{\theta}: \C \to \A=(A\ot \zz)\, \square_D\, \D
\end{equation}
to be the composite
\begin{align}\label{eq:first_row}
\C &\os{\Delta_\C}{\tto} \C\, \square_D\, \C \tto \C\, \square_D\, 
\D \overset{\psi'\square\mr{id}}{\longrightarrow} (\wwG\ot C)\, \square_D\, \D\\ 
&\os{(\op{id}\otimes \iota) \square \op{id}}{\tto} (\wwG \ot A)\, \square_D\, \D 
\overset{\wedge(\theta')\square \mr{id}}{\longrightarrow} (A \ot \zz)\, \square_D\, \D, \notag
\end{align}
where the second arrow is the Hopf algebra quotient $C\to D$
co-tensored with $\mr{id}_{\C}$.
As for the first arrow note that the coproduct $\Delta_\C$ goes into the co-tensor product $\C\, \square_D\, \C$. 
As for the third, $\psi'$, being $C$-colinear, is $D$-colinear.

\begin{prop}\label{prop:open_embed}
$\omega_{\theta} : \C \to \A$ gives rise to a right $\H$-equivariant embedding
\[ \S(\omega_{\theta}):\S \A \to \S \C=\G. \]
of superschemes onto the open subset $\pi^{-1}(U)$ of $|\G|\, (=|G|)$. 
\end{prop}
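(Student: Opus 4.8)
The plan is to establish two things separately: that $\omega_{\theta}$ is a morphism of right $\D$-super-comodule superalgebras, which makes $\S(\omega_{\theta})$ right $\H$-equivariant, and that $\S(\omega_{\theta})$ is an open embedding with image the open subset $\pi^{-1}(U)$. For the first, I would note that $\C$ carries the right $\D$-coaction induced by right multiplication (through $\C\os{\Delta_\C}{\tto}\C\ot\C\to\C\ot\D$), while the coaction on $\A=(A\ot\zz)\,\square_D\,\D$ from Proposition \ref{prop:AandB}(1) sits on the rightmost $\D$-factor. In the composite \eqref{eq:first_row} this factor is untouched, every arrow being applied to the first tensor-factor only; coassociativity of $\Delta_\C$ then renders $\omega_\theta$ $\D$-colinear, and since each arrow is multiplicative, $\omega_\theta$ is a superalgebra map. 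Hence $\S(\omega_\theta)$ is right $\H$-equivariant.

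To identify the image I would reduce modulo the odd parts. By \eqref{eq:AzcoDD} together with ${}^{coD}\D\simeq\wwH$ (the analogue for $\H$ of Remark \ref{rem:tensor_prod_decomp}(2)) one has $\A\simeq A\ot\zz\ot\wwH$, so $\A/(\A_1)=A$ and $|\S\A|$ is the open subset $\S A=\pi^{-1}(U)$ of $|\G|=\S C$. Reducing $\omega_\theta$ modulo odd elements, $\psi'$ collapses to the canonical projection onto its $C$-factor, $\wedge(\theta')$ to the identity in exterior degree zero, and $A\,\square_D\,D\simeq A$; using $\varepsilon_D\circ q=\varepsilon_C$ the reduced map comes out to be exactly $\iota:C\to A$. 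Thus the underlying topological map of $\S(\omega_\theta)$ is the open immersion $\pi^{-1}(U)\hookrightarrow|\G|$.

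By the criterion in Section \ref{subsec:superscheme} it then remains to prove that $\omega_\theta$ induces isomorphisms on stalks, $\C_{P'}\os{\simeq}{\tto}\A_P$ for each $P'\in\pi^{-1}(U)$. Both are Noetherian local superalgebras, so by the $\gr$-criterion of Section \ref{subsec:Noetherian_superalgebra} this is equivalent to the corresponding map of associated graded superalgebras being an isomorphism; as $\gr$ commutes with localization I would analyse $\gr(\omega_\theta)$ at $P'$. Here $\gr\C\simeq C\ot\wwG=\wedge_C(C\ot\wG)$ by \eqref{eq:cano_isom_grC}, while $\gr\A\simeq A\ot\zz\ot\wwH=\wedge_A(A\ot(\z\oplus\wH))$, the last equality using a splitting of the exact sequence \eqref{eq:ZWW}. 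Both being exterior algebras over their degree-zero parts and generated in degree one, it suffices that $\gr(\omega_\theta)$ be an isomorphism in degrees $0$ and $1$ after localizing: degree zero is $\iota$, an isomorphism on stalks since $\pi^{-1}(U)\hookrightarrow G$ is an open immersion; and in degree one, after the identification $C_{P'}\simeq A_P$, the map $A_P\ot\wG\to A_P\ot(\z\oplus\wH)$ assembles the retraction $r:\wG\to\z$ underlying $\theta'$ (the $\z$-component) and the projection $\wG\to\wH$ (the $\wH$-component, arising from the passage to $\D$ and ${}^{coD}\D\simeq\wwH$) into the isomorphism determined by the chosen splitting of \eqref{eq:ZWW}.

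The main obstacle I anticipate is this last point: computing the degree-one part of $\gr(\omega_\theta)$ by tracking \eqref{eq:first_row} through the associated graded. I would simplify it using the opposite-sided form of Lemma \ref{lem:pre-canonical} (to replace $\gr\psi'$ by the canonical isomorphism, making its degree-one effect transparent), the description \eqref{eq:theta'} of $\theta'$ as $\theta\circ\kappa^{-1}$, and the fact that $r$ is a retraction of $\z\hookrightarrow\wG$, so that $r$ together with $\wG\to\wH$ is automatically an isomorphism. To sidestep the delicate interaction of $\gr$ with the cotensor product $\square_D$, I would work throughout with the concrete isomorphism $\A\simeq A\ot\zz\ot{}^{coD}\D$ of \eqref{eq:AzcoDD} rather than applying $\gr$ to \eqref{eq:first_row} arrow by arrow.
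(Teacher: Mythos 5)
Your proposal is correct and, for the most part, follows the same route as the paper: $\D$-colinearity of $\omega_\theta$ (hence $\H$-equivariance) is read off from the construction (the paper leaves this implicit), the underlying continuous map is identified with the open immersion $\pi^{-1}(U)\hookrightarrow |G|$ by computing the mod-odd (degree-zero) reduction of $\omega_\theta$ to be $\iota$, and the stalk isomorphisms $\C_Q\to\A_P$ are reduced, via the Noetherian $\gr$-criterion and the compatibility of $\gr$ with localization, to showing that $\gr(\omega_\theta)_P$ is an isomorphism, with $\gr(\omega_\theta)$ computed exactly as in the paper (the opposite-sided Lemma \ref{lem:pre-canonical} for $\gr\psi'$, the relation between $\kappa_A$ and $\kappa_C$, and the identification \eqref{eq:AzcoDD}). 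The one genuine difference is the concluding step. The paper views $\gr(\omega_\theta)_P : C_Q\ot\wwG\to A_P\ot\zz\ot\wwH$ as a $\wwG$-algebra morphism in $\SM_{\wwG}^{\wwH}$, notes that its restriction to $\wwH$-coinvariants is the isomorphism $\iota_P\ot\op{id}: C_Q\ot\zz\to A_P\ot\zz$, and concludes by the $\wwH$-Galois equivalence $\SM_{\wwG}^{\wwH}\approx\SM_{\zz}$ furnished by Theorem \ref{thm:superOberst}. You instead use that both sides are exterior algebras generated in degrees $0$ and $1$ over their degree-zero parts, so that a graded algebra map between them is determined by (and equals $\wedge$ of) its components in degrees $\le 1$; it then suffices to check that degree zero is the isomorphism $\iota_P$ and that degree one is the isomorphism $a\ot w\mapsto \theta_P(a\ot w)+a\ot \bar{w}$ of $A_P\ot\wG$ onto $(A_P\ot\z)\oplus(A_P\ot\wH)$. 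This is more elementary---no Hopf--Galois descent is needed at that point---at the cost of having to extract the degree-one component of $\gr(\omega_\theta)$ explicitly, which the tools you list do accomplish. One small imprecision: you speak of ``the retraction $r:\wG\to\z$ underlying $\theta'$,'' but the chosen $\theta$ is an arbitrary retraction in $\SM_A^D$ and need not have the form $\op{id}_A\ot r$; this is harmless, since your argument only uses that $\theta_P$ is an $A_P$-linear retraction of $A_P\ot\z\hookrightarrow A_P\ot\wG$, and then $\theta_P$ paired with the projection $A_P\ot\wG\to A_P\ot\wH$ is automatically an isomorphism, being a map of short exact sequences of $A_P$-modules that is the identity on the sub and on the quotient.
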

\begin{proof}
For simplicity let us write $\X$ for $\S \A$.
As is seen from \eqref{eq:AzcoDD}, 
the underlying topological space $|\X|=\S(\A_0)$ of $\X$ is naturally identified with $\pi^{-1}(U)=\S A$. 
Let $\O_{\G}$ denote the structure sheaf of $\G$.
We should prove the following two:
\begin{itemize}
\item[(1)]\
The algebra map associated with $\omega_{\theta}$ coincides with $\iota : C\to A$, so that
$\S(\omega_{\theta})$ gives the open embedding $\pi^{-1}(U) \hookrightarrow |\G|\, (=|G|)$ 
of the underling topological spaces;
\item[(2)]\
The restricted sheaf $\O_{\G}|_{|\X|}$
coincides with $\O_{\X}$.
\end{itemize}

We wish to see what the graded superalgebra map $\gr(\omega_{\theta})$ associated with $\omega_{\theta}$ is. 
By the analogous, opposite-sided result to Lemma \ref{lem:pre-canonical}, $\gr(\psi')$ is the canonical 
isomorphism $\gr \C = \wwG \lcosmash C \os{=}{\tto} \wwG \ot C$.
Therefore, the graded algebra map associated with the first row \eqref{eq:first_row}
of the composite defining $\omega_{\theta}$ is 
\[
\gr \C=C\ot \wwG \to C \ot \wwG \ot \wwH \os{\wedge(\kappa_C)\ot \op{id}}{\tto}
\wwG \ot C \ot \wwH, 
\]
where the first arrow is the natural right $\wwH$-super-comodule structure map, and the 
$\wedge(\kappa_C)$ in the second arrow is the graded-algebra isomorphism arising from the 
isomorphism $\kappa_C$ as defined by \eqref{eq:kappa}.  
By using $\kappa_A\circ(\iota \ot \op{id}_{\wG}) =(\op{id}_{\wG}\ot\, \iota)\circ \kappa_C$, 
it follows that $\gr(\omega_{\theta})$ is the composite
\begin{equation*}\label{eq:gr_isom}
\begin{aligned}
\gr \C&=C\ot \wwG \to C \ot \wwG \ot \wwH\\ 
&\os{\iota \ot \op{id}\ot \op{id}}{\tto}
A \ot \wwG \ot \wwH
\os{\wedge(\theta)\ot \op{id}}{\tto} 
A \ot \zz \ot \wwH.
\end{aligned}
\end{equation*}
This is seen to be $\iota : C \to A$ in degree zero. This proves (1).

To prove (2), 
let $P \in \S(\A_0)$, and set $Q=\omega_{\theta}^{-1}(P)$. 
We should prove that the local superalgebra map of stalks
\begin{equation}\label{eq:omega_theta_P}
(\omega_{\theta})_P : \C_Q \to \A_P, 
\end{equation}
at $P$ is an isomorphism.  
It suffices to prove that the associated graded algebra map $\gr((\omega_{\theta})_P)$ 
is an isomorphism.  
As was seen in the proof of Lemma \ref{lemdef:graded_superscheme}, we have
$\gr((\omega_{\theta})_P)=\gr(\omega_{\theta})_P$.
As for the latter, we may suppose $P\in \S A$, $Q\in \S C$ and that 
the relevant localizations $(\gr \A)_P$ and $(\gr \C)_Q$ are by those.
In the same situation, $\iota_P : C_Q \to A_P$ is an isomorphism since $\pi^{-1}(U) \subset G$ is open. 
From the result obtained in the preceding paragraph we see that
\[ \gr(\omega_{\theta})_P: C_Q \ot \wwG \to A_P \ot \zz \ot \wwH. \]
is a right $\wwH$-super-comodule superalgebra map, which, restricted to
the $\wwH$-coinvariants, coincides with $\iota_P \ot \op{id} : C_Q \ot \zz \os{\simeq}{\tto} A_P \ot \zz$.
This last property shows that $\gr(\omega_{\theta})_P$ is an isomorphism, as desired. 
Indeed, 
$\gr(\omega_{\theta})_P$ is a morphism in that category $\SM_{\wwG}^{\wwH}$ which arises from the right $\wwH$-super-comodule
superalgebra $\wwG$; the map is, moreover, a $\wwG$-algebra morphism in the category. 
Since $\wwG \supset \zz$ is obviously a $\wwH$-Galois extension, satisfying Condition (iii) of Theorem \ref{thm:superOberst} (1),
the resulting category-equivalence $\SM_{\wwG}^{\wwH}\approx \SM_{\zz}$ can apply to see the result.
\end{proof}

\begin{rem}\label{rem:open_embed}
The argument of the last proof
(see also the proof of Lemma \ref{lemdef:graded_superscheme}) shows the following. Given a superalgebra map
$\omega : \mathbb{T}\to \mathbb{S}$ between Noetherian superalgebras, the associated morphism
$\S(\omega) :\S \mathbb{S} \to \S \mathbb{T}$ of affine superschemes is an open embedding if and only if the morphism 
$\S(\gr(\omega)):\S(\gr \mathbb{S}) \to \S(\gr \mathbb{T})$ associated with $\gr(\omega) : \gr \mathbb{T}\to \gr \mathbb{S}$
is an open embedding. Note that the two associated continuous maps between the underlying topological spaces are naturally
identified. This result is easily generalized in the obvious manner to morphisms of Noetherian superschemes.
\end{rem}


\subsection{The main theorem}\label{subsec:main_thm} 
Retaining the situation as above we have the following corollary to the previous proposition.

\begin{corollary}\label{cor:quotient_local}
The open subset $\pi^{-1}(U)\subset |\G|$, regarded as an open sub-superscheme of $\G$,
is an $\H$-equivariant affine
superscheme $\S(\O_{\G}(\pi^{-1}(U)))$ such that the faisceau 
$\pi^{-1}(U)\tilde{/}\H$ is the affine superscheme
\[ \S \big(\O_{\G}(\pi^{-1}(U))^{co \D}\big), \] 
which is Noetherian.
\end{corollary}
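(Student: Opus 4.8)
The plan is to exploit the $\H$-equivariant open embedding produced in Proposition \ref{prop:open_embed}. First I would note that $\S(\omega_{\theta})$ identifies the affine superscheme $\S \A$ with the open sub-superscheme $\pi^{-1}(U)$ of $\G$ compatibly with the $\H$-actions; being $\H$-equivariant it is in particular $\D$-colinear, so $\O_{\G}(\pi^{-1}(U))\cong \A$ as right $\D$-super-comodule superalgebras. This already shows that $\pi^{-1}(U)$, viewed as an open sub-superscheme of $\G$, is an $\H$-equivariant affine superscheme $\S(\O_{\G}(\pi^{-1}(U)))$, and by \eqref{eq:AcoD} that $\O_{\G}(\pi^{-1}(U))^{co\D}=\A^{co\D}=\B$. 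It then remains to identify the faisceau $\pi^{-1}(U)\tilde{/}\H=(\S \A)\tilde{/}\H$ with $\S \B$.

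For this I would apply Theorem \ref{thm:superOberst} to the free right $\H$-action on $\S \A$. Here $\H$ is algebraic ($\D$ is finitely generated) and $\A$ is Noetherian, being finitely generated by Proposition \ref{prop:AandB}(1); hence Part (2) of that theorem guarantees that, once the equivalent conditions of Part (1) are verified, the faisceau dur $\S \A\tilde{\tilde{/}}\H$ and the faisceau $\S \A\tilde{/}\H$ coincide and equal $\op{Sp}\B$, i.e.\ are represented by the affine superscheme $\S \B$, which is Noetherian because $\B$ is (Proposition \ref{prop:AandB}(2)). So the whole statement comes down to checking that $\A\supset \B$ is a $\D$-Galois extension, that is, to verifying one of the equivalent conditions in Theorem \ref{thm:superOberst}(1).

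I would verify condition (ii). Part (a), that $\A$ is injective as a right $\D$-super-comodule, is already contained in Proposition \ref{prop:AandB}(1). For part (b) I must show that $\alpha=\alpha_{\A}:\A\ot \A\to \A\ot \D$ of \eqref{eq:alpha} is surjective. A direct sign computation shows that $\alpha$ is in fact a map of superalgebras, the domain and codomain carrying the tensor-product multiplication; and both $\A\ot \A$ and $\A\ot \D$ are Noetherian, since $\A$ and $\D$ are finitely generated. Hence, by the criterion recalled at the end of Section \ref{subsec:Noetherian_superalgebra}, $\alpha$ is surjective as soon as $\gr(\alpha)$ is; and because $\gr$ preserves tensor products (Section \ref{subsec:graded_Hopf}), $\gr(\alpha)$ is exactly the canonical map $\alpha_{\gr \A}:\gr \A\ot \gr \A\to \gr \A\ot \gr \D$ attached to $\gr \A$ as a $\gr \D$-super-comodule superalgebra. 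Thus the problem is reduced to showing that $\gr \A\supset \gr \B$ is $\gr \D$-Galois.

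The main obstacle is this last, graded step, and it is where the geometry of the construction enters. Passing to $\gr$ splits the situation: using $\gr \D\simeq D\rcosmash \wwH$ together with \eqref{eq:AzcoDD} and the isomorphism ${}^{coD}\D\simeq \wwH$, one finds that $\gr \A$, $\gr \B$, $\gr \D$ assemble the classical $D$-Galois extension $A\supset B$ (established above, in the discussion of $\pi^{-1}(U)=\S A$) and the $\wwH$-Galois extension $\wwG\supset \zz$ already exploited in the proof of Proposition \ref{prop:open_embed}. The canonical map $\alpha_{\gr \A}$ then factors as the tensor product of the two corresponding canonical maps, each of which is surjective, whence $\alpha_{\gr \A}$, and therefore $\alpha$, is surjective. (Alternatively one could verify condition (iii): faithful flatness of $\A$ over $\B$ follows quickly from $\A\simeq (A\ot \zz)\ot \wwH$, the isomorphism $A\ot \zz\simeq A\ot_B \B$ in $\SM_A^D$ coming from Proposition \ref{prop:AandB}(2), and the faithful flatness of $A$ over $B$.) The only genuinely technical point is the bookkeeping of the $\gr \D$-coaction on $\gr \A$ needed to make the asserted factorization explicit.
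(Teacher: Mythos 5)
Your reduction is sound up to the last step, and it follows the paper's frame: identifying $\O_{\G}(\pi^{-1}(U))\simeq \A$ as $\D$-super-comodule superalgebras via Proposition \ref{prop:open_embed}, quoting $\D$-injectivity of $\A$ and Noetherianity of $\A$, $\B$ from Proposition \ref{prop:AandB}, and invoking Theorem \ref{thm:superOberst} (2) once a $\D$-Galois property is checked --- all of this is exactly what the paper does. Your observation that $\alpha_{\A}$ is a superalgebra map between Noetherian superalgebras, so that its surjectivity may be tested on $\gr(\alpha_{\A})=\alpha_{\gr \A}$, is also correct. The gap is the final claim: $\alpha_{\gr \A}$ does \emph{not} ``factor as the tensor product'' of $\alpha_A\colon A\ot A\to A\ot D$ and the canonical map of the $\wwH$-extension $\wwG\supset \zz$. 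The reason is structural: $\gr \D\simeq D \rcosmash \wwH$ is a smash coproduct, not a tensor product of Hopf superalgebras, so the $D$-part of the $\gr\D$-coaction on $\gr \A\simeq A\ot\zz\ot\wwH$ is codiagonal --- it acts on $\zz$ and $\wwH$ through the co-adjoint coaction, not on $A$ alone. One can untwist the $D$-coaction off the exterior factors using $\kappa$ of Lemma \ref{lem:kappa}, but doing so re-twists the $\wwH$-coaction, so the two coactions cannot be separated simultaneously in general. Indeed, if a factorized model existed, taking coinvariants would give $\B\simeq B\ot \zz$, i.e.\ $\B(1)$ would be a \emph{free} $B$-module, a conclusion strictly stronger than the projectivity furnished by Proposition \ref{prop:AandB} (2) (freeness is only available under extra hypotheses, cf.\ Remark \ref{rem:C-costable}). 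So the ``bookkeeping'' you defer is precisely where the content lies, and as written the surjectivity of $\alpha_{\gr\A}$ is unproved. (Your parenthetical alternative via condition (iii) has a similar defect: it addresses faithful flatness but says nothing about bijectivity of $\beta$.)

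The needed surjectivity is true, but the paper obtains it by a different and much shorter mechanism, which you could adopt verbatim: localize. For $P\in \S(\A_0)$ and $Q=\omega_{\theta}^{-1}(P)$, the stalk map $(\omega_{\theta})_P\colon \C_Q\to\A_P$ is an isomorphism by Proposition \ref{prop:open_embed}, and it fits into a commutative square relating $(\alpha_{\C})_Q$ and $(\alpha_{\A})_P$; since $\alpha_{\C}$ is automatically surjective (it is the bijection $x\ot y\mapsto x\Delta_{\C}(y)$ followed by $\mathrm{id}\ot(\C\to\D)$), every $(\alpha_{\A})_P$ is surjective, hence so is $\alpha_{\A}$. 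The same trick would finish your graded variant (using $\gr(\omega_{\theta})_P$ and $\alpha_{\gr\C}$), but then the passage to $\gr$ buys nothing. If you insist on a direct graded argument, surjectivity of $\alpha_{\gr\A}$ can be proved by filtering by exterior degree in $\wwH$, using surjectivity of $\alpha_A$, surjectivity of $\wwG\to\wwH$, and bijectivity of the twisted flip $d\ot h\mapsto d\hspace{0.3mm}h_{(1)}\ot h_{(0)}$ attached to the co-adjoint $D$-coaction on $\wwH$ --- but that is a genuine argument, not bookkeeping, and it is not a tensor-product factorization.
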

\begin{proof}
By Proposition \ref{prop:open_embed}, $\pi^{-1}(U)$ is $\H$-stable and affine in $\G$; in particular, 
$\O_{\G}(\pi^{-1}(U))$ is a right $\D$-super-comodule superalgebra. Moreover, $\omega_{\theta}$ naturally 
factors through an isomorphism
$\O_{\G}(\pi^{-1}(U))\os{\simeq}{\tto} \A$
of $\D$-super-comodule superalgebras, which obviously restricts to
$\O_{\G}(\pi^{-1}(U))^{co\D}$ $\os{\simeq}{\tto} \B$.
Recall from Proposition \ref{prop:AandB} that $\A$ is finitely generated, and $\B$ is Noetherian.

We claim that $\A \supset \B$ is a $\D$-Galois extension; this implies the corollary by Theorem \ref{thm:superOberst} (2).
Since $\A$ is $\D$-injective by Proposition \ref{prop:AandB} (1), it suffices by Theorem \ref{thm:superOberst} (1) (see Condition (ii))
to prove that 
the $\A_0$-superalgebra map $\alpha_{\A}$ in \eqref{eq:alpha} is surjective. Let $P$, $Q$ be as in the last proof. 
Then we have the following commutative diagram which contains the map $(\omega_{\theta})_P$ in \eqref{eq:omega_theta_P};
it has been proved to be an isomorphism. 
\[
\begin{xy}
(0,16)   *++{\C_Q \ot \C}  ="1",
(30,16)  *++{\C_Q \ot \D}    ="2",
(0,0)   *++{\A_P \ot \A}  ="3",
(30,0)   *++{\A_P \ot \D}  ="4",
{"1" \SelectTips{cm}{} \ar @{->}^{(\alpha_{\C})_Q} "2"},
{"1" \SelectTips{cm}{} \ar @{->}_{(\omega_{\theta})_P\ot \omega_{\theta}}"3"},
{"2" \SelectTips{cm}{} \ar @{->}^{(\omega_{\theta})_P\ot \op{id}}_{\simeq} "4"},
{"3" \SelectTips{cm}{} \ar @{->}^{(\alpha_{\A})_P} "4"}
\end{xy}
\]
Here the horizontal arrows are localizations of the alpha maps.
The upper $(\alpha_{\C})_Q$ is surjective since the map $\alpha_\C$ factors through the canonical isomorphism
$\C \ot \C \os{\simeq}{\longrightarrow} \C\ot \C,\ x \ot y \mapsto x\Delta_{\C}(y)$,  and is, therefore,
surjective. 
It follows that the lower $(\alpha_{\A})_P$ is as well, proving the desired surjectivity.
\end{proof}
\begin{rem}\label{rem:underlying_space}
As for the isomorphism $\mathcal{O}_{\G}(\pi^{-1}(U)) \overset{\simeq}{\longrightarrow} \A$, the induced isomorphism
of the associated algebras is the identity map of $A$; see the first half of the proof of 
By Proposition \ref{prop:open_embed}. Therefore, as for the restricted isomorphism 
$\O_{\G}(\pi^{-1}(U))^{co\D} \overset{\simeq}{\longrightarrow} \B$, that isomorphism is the identity map of $B$. 
Hence
the underlying topological space of $\S(\O_{\G}(\pi^{-1}(U))^{co\D})$ coincides with $U$, that space of $\S B$. 
\end{rem} 

Given a non-empty affine open subset $U$ of $|G/H|$, we thus have
the Noetherian affine superscheme $\S \big(\O_{\G}(\pi^{-1}(U))^{co \D}\big)$ with underlying topological space $U$.  

\begin{theorem}\label{mainthm}
The Noetherian affine superschemes 
\[ \S\big(\O_{\G}(\pi^{-1}(U))^{co \D}\big), \] 
where $U$ ranges over non-empty
affine open subsets of $|G/H|$, are uniquely glued into a superscheme, which is Noetherian, 
with the underlying topological space $|G/H|$. 
This superscheme is the quotient superscheme $\G/\H$
of $\G$ by $\H$, and represents 
the faisceau $\G\tilde{/}\H$. 
\end{theorem}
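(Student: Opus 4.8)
The plan is to verify the gluing directly on overlaps and then identify the glued object functorially. First I would establish the local gluing data. Given non-empty affine open subsets $U' \subset U$ of $|G/H|$, I have the restriction map $A = \O_G(\pi^{-1}(U)) \to A' = \O_G(\pi^{-1}(U'))$ of $D$-comodule algebras coming from $\pi^{-1}(U') \subset \pi^{-1}(U)$ in $G$, and I would pass to the associated superalgebra data $\A, \A'$ and $\B = \A^{co\D}, \B' = (\A')^{co\D}$ via the constructions in \eqref{eq:AandB}. The key point is that the formation of $\B(1) = A\,\square_D\,\z$ (see \eqref{eq:alternativeB1}) and of $\B = \wedge_B(A\,\square_D\,\z)$ (see \eqref{eq:B}) is natural in $A$, so the map $A \to A'$ induces a compatible map $\B \to \B'$. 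Since $B \to B'$ is a localization-type restriction realizing $U' \subset U$ as open, and $\B = \wedge_B(\B(1))$, $\B' = \wedge_{B'}(\B'(1))$ with $\B'(1) = \B(1)\otimes_B B'$, the map $\S\B' \to \S\B$ is an open embedding identifying $\S\B'$ with the preimage of $U'$; this is exactly the cocycle-free compatibility needed to glue, and it follows from Remark \ref{rem:open_embed} applied to the Noetherian superalgebras involved once one checks the associated graded map is an open embedding, which reduces to the classical statement that $U' \subset U$ is open in $G/H$.

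Next I would carry out the formal gluing. Having the affine superschemes $\S(\O_\G(\pi^{-1}(U))^{co\D})$ indexed by affine open $U \subset |G/H|$, together with the transition isomorphisms above satisfying the evident compatibility on triple overlaps (inherited from the cocycle condition for the scheme $G/H$, since everything is built functorially from the $A$'s), the standard gluing lemma for local-super-ringed spaces produces a superscheme with underlying topological space $|G/H|$; by Remark \ref{rem:underlying_space} each chart has underlying space exactly $U$, so the spaces match up correctly. That this glued object is Noetherian follows since it is covered by the Noetherian affine superschemes $\S\B$ (each $\B$ is Noetherian by Proposition \ref{prop:AandB} (2)), and Noetherianity is a local condition on the structure sheaf.

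It then remains to identify the glued superscheme, call it $\Y$, with $\G/\H$ and to show it represents $\G\tilde{/}\H$. The strategy is to produce a canonical morphism $\G \to \Y$ and verify the universal co-equalizer property locally. Over each chart $U$, Corollary \ref{cor:quotient_local} gives that $\pi^{-1}(U)$ is $\H$-stable affine open in $\G$ with $\pi^{-1}(U)\tilde{/}\H = \op{Sp}\B = \S\B^\diamond$ a functorial affine superscheme; equivalently the morphism $\S\big(\O_\G(\pi^{-1}(U))\big) \to \S\B$ is the quotient by the free $\H$-action. These local quotient morphisms are compatible with restriction (again by functoriality in $A$), so they glue to a morphism $\G \to \Y$. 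To conclude that this is a co-equalizer, I would pass to faisceaux: the local statements say $\pi^{-1}(U)\tilde{/}\H$ is represented by $\S\B$, and since $\G = \bigcup_U \pi^{-1}(U)$ as superschemes and $\H$ acts by right translation preserving each $\pi^{-1}(U)$, the faisceau quotient $\G\tilde{/}\H$ is the union of the local affine faisceau quotients, hence is the functorial superscheme $\Y^\diamond$ represented by $\Y$. Lemma \ref{lem:X/G} then upgrades this to the statement that $\Y$ is the quotient superscheme $\G/\H$ and represents $\G\tilde{/}\H$.

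The main obstacle I anticipate is the gluing of the quotient morphisms into a single morphism $\G \to \Y$ with the correct universal property at the level of faisceaux, rather than the point-set gluing of charts. Specifically, one must check that the local identifications $\pi^{-1}(U)\tilde{/}\H \simeq \op{Sp}\B$ are natural enough to patch, which hinges on the compatibility of the retractions $\theta$ (and the auxiliary data $\eta$) across overlaps; Remark \ref{rem:compatible} (2) is precisely what guarantees this, so the argument must invoke it to ensure the maps $\omega_\theta$ and hence the local quotient presentations are coherent on $\pi^{-1}(U)\cap\pi^{-1}(U') = \pi^{-1}(U\cap U')$. Once that coherence is in hand, the remaining verifications are formal consequences of the Comparison Theorem (Theorem \ref{thm:comparison}) and Lemma \ref{lem:X/G}.
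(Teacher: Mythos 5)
Your proposal is correct and follows essentially the same route as the paper's own proof: glue the intrinsic charts $\S\big(\O_{\G}(\pi^{-1}(U))^{co \D}\big)$ by identifying their restriction maps with the natural maps $A\, \square_D\, \zz \to A'\, \square_D\, \zz$ via compatibly chosen retractions (Remark \ref{rem:compatible} (2)), then identify the glued superscheme with $\G/\H$ and with the faisceau $\G\tilde{/}\H$ through Corollary \ref{cor:quotient_local}, Lemma \ref{lem:X/G} and Theorem \ref{thm:comparison}. The only minor divergence is in verifying the overlap isomorphism: you use naturality and flat base change ($\B' \simeq \B \otimes_B B'$, via Remark \ref{rem:open_embed}), while the paper argues at stalks using the $D$-Galois property (the $\beta$-isomorphisms together with faithfully flat descent); both verifications are valid.
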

\begin{proof}
The theorem consists of two assertions. 

\emph{Proof of the first assertion.}\
Let $U =\S B \supset U'=\S B'$ be affine open subschemes of $G/H$. 
The restriction map $\O_{\G}(\pi^{-1}(U))\to \O_{\G}(\pi^{-1}(U'))$ restricts to
\begin{equation}\label{eq:restriction_map}
\O_{\G}(\pi^{-1}(U))^{co\D}\to \O_{\G}(\pi^{-1}(U'))^{co\D}.
\end{equation}
Suppose $\pi^{-1}(U)=\S A$,\ $\pi^{-1}(U')=\S A'$ in $G$. From these $A$ and $A'$,
we construct superalgebras
$\A \supset \B$, $\A'\supset \B'$, respectively, as in \eqref{eq:AandB}. By choosing 
retractions $A^{(')} \ot \wG \to A^{(')} \ot \z$ as in \eqref{eq:retraction}, we construct
$\D$-super-comodule superalgebra maps
\[ \C \to \A =(A\ot \zz)\, \square_D \, \D,\quad \C \to \A' =(A'\ot \zz)\, \square_D \, \D, \]
as in \eqref{eq:omega}, which give open embeddings of $\S \A$ and of $\S \A'$ into $\G=\S \C$ by Proposition \ref{prop:open_embed}. 
As is seen from Remark \ref{rem:compatible} (2) and the description \eqref{eq:retraction}
of the retractions, we may suppose that the two superalgebra maps above are
compatible with the map $(A\ot \zz)\, \square_D\, \D \to (A'\ot \zz)\, \square_D\, \D$ which arises
from the restriction map $A \to A'$; this compatibility is expressed by commutativity of the diagram: 
\[
\begin{xy}
(0,0)   *++{\C}  ="1",
(16,8)  *++{}  ="1.5",
(33.5,8)  *++{\A =(A\ot \zz)\, \square_D\, \D}    ="2",
(16,-8)  *++{}  ="2.5",
(33.5,-8)   *++{\A' =(A'\ot \zz)\, \square_D \, \D}  ="3",
{"1" \SelectTips{cm}{} \ar @{->} "1.5"},
{"1" \SelectTips{cm}{} \ar @{->} "2.5"},
{"2" \SelectTips{cm}{} \ar @{->} "3"}
\end{xy}
\]
Consequently, the map \eqref{eq:restriction_map}
may be supposed to be the map
\begin{equation}\label{eq:BtoB'}
\B = A\, \square_D\, \zz\, \to\, 
A'\, \square_D\, \zz =\B'
\end{equation}
which arises from $A \to A'$, again. Here for $\B$ and $\B'$,  
we have used description analogous to \eqref{eq:alternativeB1}. 

Let $P \in U'\, (=\S B')$. Let $Q$ be the pullback of $P$ in $B$ along 
the algebra map $B \to B'$ associated with $U \supset U'$. The map induces
an isomorphism, $B_Q \os{\simeq}{\tto} B'_P$, of stalks. 
We claim that the superalgebra map above induces an isomorphism, $\B_Q \os{\simeq}{\tto} \B'_P$,
between the stalks. Here one should notice from \eqref{eq:B} that $\B_Q=\B\ot_B B_Q$ and $\B'_P=\B'\ot_{B'}B'_P$ 
are indeed the stalks; see the proof of Lemma \ref{lemdef:graded_superscheme}. 
Since $B^{(')} =A^{(')coD}$, the exactness of localization shows that the localized $\B_Q \to \B'_P$ coincides
with the localized $A_Q \to A'_P$ co-tensored over $D$ with the identity map
$\zz \to \zz$. 
The map $A_Q \to A'_P$ is a $D$-comodule algebra map, and it restricts to the isomorphism
$B_Q \os{\simeq}{\tto} B'_P$. Since $A_Q\supset B_Q$ and $A'_P\supset B'_P$ are $D$-Galois, it follows that 
$A_Q \to A'_P$ is an isomorphism, proving the claim.
Indeed, we have the commutative diagram:
\[
\begin{xy}
(0,16)   *++{A'_P\ot_{B_Q}A_Q}  ="1",
(30,16)  *++{A'_P \ot D}    ="2",
(0,0)   *++{A'_P\ot_{B'_P}A'_P}  ="3",
(30,0)   *++{A'_P \ot D}  ="4",
{"1" \SelectTips{cm}{} \ar @{->}^{\quad \simeq} "2"},
{"1" \SelectTips{cm}{} \ar @{->} "3"},
{"2" \SelectTips{cm}{} \ar @{->}^{\mr{id}} "4"},
{"3" \SelectTips{cm}{} \ar @{->}^{\quad \simeq} "4"}
\end{xy}
\]
Here the second row is the canonical isomorphism $\beta$ for $A'_P\supset B'_P$ (see \eqref{eq:beta}), while
the first is the base extension of the isomorphism for $A_Q\supset B_Q$, along $A_Q \to A'_P$. 
Since $(B_Q\simeq)\, B'_P\to A'_P$ is faithfully flat, $A_Q \to A'_P$ is an isomorphism.

Let
\[ \Y_U:=\S\big(\O_{\G}(\pi^{-1}(U))^{co \D}\big). \]
The claim just proven implies that the structure sheaf of this $\Y_U$,
restricted to $U'$, coincides with that sheaf of $\Y_{U'}$.
This proves the first assertion: the Noetherian affine superschemes are uniquely glued into a superscheme,
say $\Y$. 
It is Noetherian since $G/H$, being Noetherian, is covered by finitely many $U$'s.  

\emph{Proof of the second assertion.}\
By Corollary \ref{cor:quotient_local}, $\Y_U$ represents the faisceau $\pi^{-1}(U)\tilde{/}\H$.
By Lemma \ref{lem:X/G} $\Y_U$ is the quotient superscheme $\pi^{-1}(U)/ \H$. 
We know that $\Y$ is the union $\bigcup_{i}\Y_{U_i}\, (= \bigcup_{i}\pi^{-1}(U_i)/ \H)$, 
where $|G/H|=\bigcup_iU_i$. As is easily seen, the quotient morphisms $ \pi^{-1}(U_i) \to \Y_{U_i}$ uniquely 
extend to a morphism $\G \to \Y$. 
It follows that the superscheme $\Y$ equipped with the last morphism 
is the quotient superscheme  $\G/\H$. 
The category-equivalence $\X \mapsto \X^{\diamond}$ in Theorem \ref{thm:comparison} preserves
open embeddings, and $\Y^{\diamond}= \bigcup_{i}\pi^{-1}(U_i)\tilde{/}\H$; see \cite[Lemma 5.2]{MZ1}.  
It follows that $\Y^{\diamond}=\G\tilde{/}\H$, or $\Y$ represents $\G\tilde{/}\H$. 
As an additional remark, $\Y_U$ represents, indeed, the faisceau dur $\pi^{-1}(U)\tilde{\tilde{/}}\H$, which
coincides with the faisceau $\pi^{-1}(U)\tilde{/}\H$, as is seen from the proof of Corollary \ref{cor:quotient_local}.
Therefore, the argument above shows $\Y^{\diamond}=\G\tilde{\tilde{/}}\H=\G\tilde{/}\H$.
\end{proof}

\begin{rem}\label{rem:about_mainthm}
Let $\O_{\G/\H}$ (resp., $\O_{G/H}$) denote the structure sheaf of $\G/\H$ (resp., $G/H$). 
In view of \eqref{eq:B} we see from the last proof that $\O_{\G/\H}$ is \emph{locally} isomorphic to
\[ \wedge_{\O_{G/H}}(\pi_*\O_G\, \square_{D}\, \z); \]
to be more precise the two sheaves are isomorphic, restricted to every open subset that is affine
in $G/H$, or equivalently, in $\G/\H$; see Proposition \ref{prop:affinity2} (2) below. 
Here note that for every open subset $U \subset |G/H|\, (=|\G/\H|)$, 
$\pi^{-1}(U)\, (\simeq U \times_{G/H} G)$ is 
$H$-stable in $G$.
Hence $\O_G(\pi^{-1}(U))$ is naturally a right $D$-comodule, and so we have the co-tensor product 
$\O_G(\pi^{-1}(U))\, \square_{D}\, \z$, which is naturally identified with 
the super-vector space
\[ \operatorname{Hom}_{D^*}(\z^*, \O_G(\pi^{-1}(U))) \]
of the maps $\z^*\to \O_G(\pi^{-1}(U))$ of the left supermodules
over the dual superalgebra $D^*$ of $D$; see \eqref{eq:alteranativeB2}. 
This shows that the presheaf
$\pi_*\O_G\, \square_{D}\, \z$, which assigns $\O_G(\pi^{-1}(U))\, \square_{D}\, \z$
to every open $U$, is a sheaf. It is indeed an $\O_{G/H}$-module sheaf, which is locally free,
as is seen from Proposition \ref{prop:AandB} (2).
\end{rem}


\subsection{Some consequences of the theorem}\label{subsec:consequences}
The first half of the next corollary has been obtained in \cite[Corollary 8.15]{MZ1}, while the second half is probably new.
Recall from \eqref{eq:GHCD} and \eqref{eq:ghcd} the notation. 

\begin{corollary}\label{cor:affinity}
The superscheme $\G/\H$ is affine if and only if the scheme $G/H$ is affine.
In this case we have
\begin{itemize}
\item[(1)] $\C\simeq (C \otimes \zz) \square_D \D$ as right $\D$-super-comodule superalgebras, and
\item[(2)] $\G/\H=\S(\wedge_B(C\, \square_D\, \z))$, 
\end{itemize}
where we let $B=C^{co D}$, and so  $G/H=\S B$. 
\end{corollary}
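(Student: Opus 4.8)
The plan is to read off both the equivalence and the two explicit formulas by specializing the construction behind Theorem~\ref{mainthm} to the single affine open subset $U = G/H$, supplemented by one general observation: that the scheme associated with $\G/\H$, in the sense of Lemma-Definition~\ref{lemdef:associated_scheme}, is exactly $G/H$. Since $|\G/\H| = |G/H|$ already, the only genuine content in the equivalence is the descent of affineness from $\G/\H$ to $G/H$, and this is precisely what the associated-scheme observation supplies.

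For the ``only if'' direction I would first record that, for every non-empty affine open $U \subset |G/H|$, the gluing in Theorem~\ref{mainthm} together with \eqref{eq:B} gives $\O_{\G/\H}(U) \simeq \wedge_B(A\, \square_D\, \z)$, where $A = \O_G(\pi^{-1}(U))$ and $B = A^{coD} = \O_{G/H}(U)$; hence $\O_{\G/\H}(U)/(\O_{\G/\H}(U)_1) = B = \O_{G/H}(U)$. Because such $U$ cover $|G/H| = |\G/\H|$, this identifies the scheme associated with $\G/\H$ with $G/H$. Then, if $\G/\H$ is affine, say $\G/\H = \S\B$, its associated scheme is the affine scheme $\S(\B/(\B_1))$ by \eqref{eq:assoc_algebra}; comparing the two descriptions gives $G/H = \S(\B/(\B_1))$, which is affine.

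For the ``if'' direction and the two formulas I would take $U = G/H$ itself. Then $\pi^{-1}(G/H) = G$, so $A = \O_G(\pi^{-1}(U)) = C$ and $\O_\G(\pi^{-1}(U)) = \O_\G(|\G|) = \C$; Theorem~\ref{mainthm} then gives at once $\G/\H = \S(\C^{co\D})$, which is affine. For assertion (1), the isomorphism of Corollary~\ref{cor:quotient_local} specializes, with $A = C$ and with the restriction map $\C \to \O_\G(\pi^{-1}(U))$ now the identity of $\C$, to an isomorphism $\omega_\theta : \C \os{\simeq}{\tto} (C \otimes \zz)\, \square_D\, \D$ of right $\D$-super-comodule superalgebras, as given by \eqref{eq:omega}. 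For assertion (2), restricting $\omega_\theta$ to the $\D$-coinvariants and invoking \eqref{eq:AcoD} and \eqref{eq:B} with $A = C$ and $B = C^{coD}$ yields $\C^{co\D} \simeq (C \otimes \zz)^{coD} \simeq \wedge_B(C\, \square_D\, \z)$, whence $\G/\H = \S(\wedge_B(C\, \square_D\, \z))$. I expect the only real obstacle to be the ``only if'' direction, namely making the descent of affineness precise through the associated scheme; once $\pi^{-1}(G/H) = G$ (so that $A = C$ and $\O_\G(\pi^{-1}(G/H)) = \C$) is observed, the remaining assertions are direct specializations of the general construction and require no new computation.
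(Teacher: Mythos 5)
Your proposal is correct. For the ``if'' direction and formulas (1), (2) it coincides with the paper's argument: the paper derives exactly these from Proposition \ref{prop:open_embed}, Corollary \ref{cor:quotient_local} and Remark \ref{rem:about_mainthm}, i.e.\ by the same specialization to $U=G/H$, $\pi^{-1}(U)=G$, $A=C$, $\O_\G(\pi^{-1}(U))=\C$ that you perform. Where you genuinely diverge is the ``only if'' direction. The paper disposes of it functorially: if the faisceau $\G\tilde{/}\H=\op{Sp}\B$ is affine, then $G\tilde{/}H=\op{Sp}(\B/(\B_1))$, whence $G/H$ is affine. You instead argue geometrically, first identifying the scheme associated with $\G/\H$ (Lemma-Definition \ref{lemdef:associated_scheme}) with $G/H$, and then noting that the scheme associated with an affine superscheme $\S\B$ is $\S(\B/(\B_1))$. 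This is precisely the content and proof of Proposition \ref{prop:affinity2}\,(1), which the paper states only after this corollary; your route therefore proves that proposition en passant, and it stays entirely on the geometric side, needing neither faisceaux nor the Comparison Theorem for this step. There is no circularity in doing so: your identification rests only on Theorem \ref{mainthm}, on \eqref{eq:B}, and on Remark \ref{rem:underlying_space} --- the latter is what guarantees that the isomorphisms $\O_{\G/\H}(U)\simeq\wedge_B\bigl(A\,\square_D\,\z\bigr)$ induce the identity of $B=\O_{G/H}(U)$ and hence glue to an identification of schemes, so you should cite it explicitly at that point. The trade-off: the paper's functorial one-liner is shorter but leans on the faisceau formalism, while yours is slightly longer but makes Proposition \ref{prop:affinity2}\,(1) immediate as a by-product.
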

\begin{proof}
The ``only if" follows since one sees from the functorial viewpoint that
if $\G\tilde{/}\H=\op{Sp} \B$ is affine, then $G\tilde{/}H=\op{Sp}(\B/(\B_1))$.
The remaining follows
from Proposition \ref{prop:open_embed}, Corollary \ref{cor:quotient_local} and Remark \ref{rem:about_mainthm}.  
\end{proof}

\begin{rem}\label{rem:affinity}
It follows by \cite[Theorem 5.2]{Z} or Theorem \ref{thm:superOberst} that $\G/\H$ is 
affine if and only if
\begin{itemize}
\item[(i)] $\C$ is injective (or equivalently, coflat) as a left or right $\D$-comodule.
\end{itemize}
It is known (see \cite[Theorem 5.9]{M1}, \cite[Theorem 6.2]{Z}) that if $\H$ is normal in $\G$,  then
the equivalent conditions are satisfied, and $\G/\H$ is naturally an affine algebraic supergroup.  
The classical counterpart (see \cite[Theorem 10]{T2}, \cite[Folgerung B]{O}) states that $G/H$ is affine if and only if
\begin{itemize}
\item[(ii)] $C$ is injective (or equivalently, coflat) as a left or right $D$-comodule.
\end{itemize}
If $H$ is normal in $G$,  then
the equivalent conditions are satisfied, and $G/H$ is naturally an affine algebraic group.  

Therefore, Corollary \ref{cor:affinity}
tells us that Conditions (i) and (ii) are equivalent. If $H$ is normal in $G$, then $\G/\H$ is an affine
algebraic superscheme since the $B$ is in the corollary is then finitely generated. 
\end{rem}

The first half of the last corollary (or \cite[Corollary 8.15]{MZ1}) is generalized by Part 2 of the next Proposition, which would
reveal a remarkable feature of $\G/\H$. 

\begin{prop}\label{prop:affinity2}
We have the following.
\begin{itemize}
\item[(1j]
$G/H$ is naturally isomorphic to the scheme associated with the superscheme $\G/\H$; see Definition \ref{lemdef:associated_scheme}. 
\item[(2)]
Given an open subset $U\subset |\G/\H|\, (=|G/H|)$,  
the open sub-superscheme $(U,\O_{\G/\H}|_U)$ of $\G/\H$ is affine if and only if the open subscheme 
$(U, \O_{G/H}|_U)$ of $G/H$ is affine.
\item[(3)]
$\G/\H$ is smooth if and only if $G/H$ is smooth. 
The equivalent conditions hold, either if the characteristic $\operatorname{char} \Bbbk$ of $\Bbbk$ is zero,
or if $\operatorname{char} \Bbbk>2$ and $G$ is smooth. 
\item[(4)]
The quotient morphism $\G \to \G/\H$ is affine, faithfully flat and finitely presented.
\end{itemize}
\end{prop}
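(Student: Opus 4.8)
The plan is to verify all four parts on the affine open cover of $|\G/\H|=|G/H|$ furnished by Theorem \ref{mainthm}, reading each off the local structure of $\O_{\G/\H}$ already obtained. First I would fix a non-empty open $U\subseteq|G/H|$ that is affine in $G/H$ and set $B=\O_{G/H}(U)$, $A=\O_G(\pi^{-1}(U))$, so that $U=\S B$ and $\pi^{-1}(U)=\S A$; by Corollary \ref{cor:quotient_local} and \eqref{eq:B} the open sub-superscheme $(U,\O_{\G/\H}|_U)$ is the affine superscheme $\S\B$, where $\B=\O_\G(\pi^{-1}(U))^{co\D}\simeq\wedge_B(A\,\square_D\,\z)$ and $A\,\square_D\,\z$ is a finitely generated projective $B$-module of rank $\dim\z$.

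For (1), I note that the super-ideal $(\B_1)$ generated by the odd part of $\B=\wedge_B(A\,\square_D\,\z)$ is its augmentation ideal $\bigoplus_{n\ge 1}\wedge^n_B(A\,\square_D\,\z)$, so $\B/(\B_1)=B=\O_{G/H}(U)$; since the transition maps are the identity in degree zero (Remark \ref{rem:underlying_space}), gluing the affine schemes $\S(\B/(\B_1))=U$ identifies the scheme associated with $\G/\H$ (Lemma-Definition \ref{lemdef:associated_scheme}) naturally with $G/H$. For (2), if $U$ is affine in $G/H$ then $(U,\O_{\G/\H}|_U)=\S\B$ is affine by the very construction of Theorem \ref{mainthm}; conversely, if $(U,\O_{\G/\H}|_U)$ is affine then its associated scheme is affine, and by the locality of the construction in (1) this associated scheme is exactly $(U,\O_{G/H}|_U)$, which is therefore affine.

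For (3), Remark \ref{rem:about_mainthm} shows that at each point $P\in|G/H|$ the stalk is $\O_{\G/\H,P}\simeq\wedge_{R}(R^{\dim\z})$ with $R=\O_{G/H,P}$, a locally free module over a local ring being free; thus $\G/\H$ is locally split over $G/H$ with purely odd, locally free odd part. Invoking the smoothness criteria of \cite[Theorem A.2]{MZ2} for Noetherian affine superschemes, such a split superscheme is smooth at $P$ precisely when its associated scheme $G/H$ is smooth at $P$, the exterior-algebra factor (with free odd cotangent module) producing no extra obstruction; hence $\G/\H$ is smooth iff $G/H$ is. It then remains to record the two classical sufficient conditions: when $\op{char}\Bbbk=0$, $G$ is smooth by Cartier's theorem, whence $G/H$ is smooth; when $\op{char}\Bbbk>2$ and $G$ is smooth, one uses that $\pi:G\to G/H$ is an fppf $H$-torsor, so that fppf-locally $G\cong(G/H)\times H$ and smoothness of $G$ descends to $G/H$ (see \cite{J}).

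For (4), the asserted properties are local on the base, so it suffices to treat $q:\G\to\G/\H$ over each affine-in-$G/H$ open $U$. There $q^{-1}(U)=\pi^{-1}(U)=\S\A$ with $\A=\O_\G(\pi^{-1}(U))$ is affine, giving affineness; and the restricted morphism $\S\A\to\S\B$ is induced by the inclusion $\B=\A^{co\D}\hookrightarrow\A$ (see \eqref{eq:AcoD}). Since $\A\supset\B$ is a $\D$-Galois extension (proof of Corollary \ref{cor:quotient_local}), Theorem \ref{thm:superOberst}(1) gives that $\A$ is faithfully flat over $\B$, while $\A$, being finitely generated over $\Bbbk$ and hence over the Noetherian $\B$, is finitely presented over $\B$; this yields faithful flatness and finite presentation. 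I expect the genuine work to lie in (3): everything else is a direct reading-off of the local description $\O_{\G/\H}|_U\simeq\wedge_B(A\,\square_D\,\z)$, whereas (3) requires both the super-smoothness characterization \cite[Theorem A.2]{MZ2}, to see that the exterior-algebra direction is harmless, and the correct classical input ensuring that $G/H$ is smooth under the stated hypotheses.
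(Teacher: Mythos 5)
Your proposal is correct and follows essentially the same route as the paper: all four parts are read off from the local description $\O_{\G/\H}|_U\simeq\wedge_B(A\,\square_D\,\z)$, with Remark \ref{rem:underlying_space} giving (1), Theorem \ref{mainthm} plus (1) giving (2), the stalkwise exterior-algebra structure and \cite[Theorem A.2]{MZ2} giving (3), and the $\D$-Galois property of $\A\supset\B$ giving the flatness in (4). The only divergence is the finite-presentation step in (4), where you invoke the fact from Section \ref{subsec:Noetherian_superalgebra} that a finitely generated superalgebra over the Noetherian $\B$ is finitely presented, whereas the paper instead descends finite presentation along the faithfully flat $\B\to\A$ via the isomorphism $\beta:\A\ot_{\B}\A\os{\simeq}{\tto}\A\ot\D$; both arguments are valid.
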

\begin{proof}
(1)\
This was shown in \cite[Proposition 9.3]{MZ1}, but now it turns much easier to prove. Indeed,
the structure sheaf of the associated scheme is naturally isomorphic to $\O_{G/H}$, since it is so
on all affine open subsets of $|G/H|$ by Remark \ref{rem:underlying_space}.

(2)\ 
The ``if" follows from Theorem \ref{mainthm}, while the ``only if" follows from Part 1 above.

(3)\ For every point $P \in  |\G/\H|$, $\O_{\G/\H ,P}$ is the exterior algebra over $\O_{G/H,P}$ 
on a finitely generated free $\O_{G/H,P}$-module, as is seen from Proposition \ref{prop:AandB} (2). 
It follows from \cite[Theorem A.2]{MZ2} that $\O_{\G/\H ,P}$ is smooth if and only if $\O_{G/H,P}$ is. 
This proves the first assertion. 
The second follows from the facts: (i)~$G/H$ is smooth if $G$ is smooth, and (ii)~the last assumption is
always satisfied if $\operatorname{char} \Bbbk=0$. 

(4)\
In addition to Part 2 above we have the result in a special case that an open sub-superscheme of $\G$ is affine 
if and only if the associated, open sub-scheme of $G$ is affine.
Hence the desired affinity follows from the fact that $G \to G/H$ is affine. 
The faithful flatness follows since 
with the notation \eqref{eq:AandB}, $\A$ is faithfully flat over $\B$.
The remaining follows, since the $\B$-superalgebra $\A$ is finitely presented, 
being so after base extension along $\B \to \A$; this last is seen from the isomorphism 
$\beta: \A\ot_{\B}\A\os{\simeq}{\tto}\A\ot \D$ in \eqref{eq:beta}.  
\end{proof}

\begin{rem}\label{rem:from_MZ}
Corollary 9.10 of \cite{MZ1} proves that 
the first two properties of Part 4 above are possessed, more generally, by
the quotient superscheme $\X/\G$, if it exists and represents the faisceau $\X\tilde{/}\G$, where $\X$ is 
an affine superscheme, and $\G$ is
an affine algebraic supergroup which freely acts on $\X$.
\end{rem}

Recall from Section \ref{subsec:graded_superscheme} the definition of superschemes being split.

\begin{prop}\label{prop:grG/H}
The graded superscheme $\gr(\G/\H)$ associated with $\G/\H$ (see Definition \ref{lemdef:graded_superscheme})
is split with the structure sheaf 
\begin{equation}\label{eq:splitting_module_sheaf}
\wedge_{\O_{G/H}}(\pi_*\O_G\, \square_{D}\, \z);
\end{equation}
see Remark \ref{rem:about_mainthm}. Moreover, the morphism $\gr \G \to \gr(\G/\H)$ associated
with the quotient morphism $\G \to \G/\H$ induces an isomorphism
\begin{equation}\label{eq:isom_gr}
\gr \G/\gr \H \simeq \gr(\G/\H). 
\end{equation}
\end{prop}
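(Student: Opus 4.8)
The plan is to treat the two assertions separately, reducing everything to the local descriptions supplied by Theorem \ref{mainthm}.

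For the first assertion I would start from the local structure of $\O_{\G/\H}$. By Corollary \ref{cor:quotient_local} together with \eqref{eq:B}, for every affine open $U\subset|G/H|$ with $U=\S B$ and $\pi^{-1}(U)=\S A$, the superalgebra $\O_{\G/\H}(U)=\O_{\G}(\pi^{-1}(U))^{co\D}$ is isomorphic to $\wedge_B(A\, \square_D\, \z)$, the exterior $B$-algebra on the finitely generated projective, purely odd $B$-module $A\, \square_D\, \z=(\pi_*\O_G\, \square_D\, \z)(U)$. The elementary observation I would use is that any superalgebra of the form $\wedge_B(M)$ with $M$ purely odd is split and equals its own associated graded: the super-ideal generated by the odd part is $\wedge_B^{\ge 1}(M)$, so $I^n/I^{n+1}=\wedge_B^n(M)$ and $\gr\wedge_B(M)=\wedge_B(M)$, with degree-one part $M$. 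Hence each $\gr\O_{\G/\H}(U)$ is this exterior algebra, while the gluing maps of $\gr(\G/\H)$ are the images under $\gr$ of the restriction maps \eqref{eq:BtoB'}, which are induced from $A\to A'$ and therefore preserve exterior degree. Consequently the structure sheaf of $\gr(\G/\H)$ is globally $\wedge_{\O_{G/H}}(\pi_*\O_G\, \square_D\, \z)$; since this module sheaf is locally free and purely odd (Proposition \ref{prop:AandB} (2), Remark \ref{rem:about_mainthm}) and its degree-zero part is $\O_{G/H}$, the superscheme $\gr(\G/\H)$ is split with associated scheme $G/H$. (The direct exterior description makes splitness immediate; one need not invoke the general principle, valid for Noetherian regular $\X$ via \cite[Theorem A.2]{MZ2}, that smoothness of the stalks forces them to be split.)

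For the second assertion I would first apply Theorem \ref{mainthm} to the pair $\gr\G=\S(\gr\C)\supset\gr\H=\S(\gr\D)$, identifying the data it uses. The affine algebraic group associated with $\gr\G$ is the degree-zero component $(\gr\C)(0)=C$, i.e. $G$, and likewise that of $\gr\H$ is $H$, so the base is again $G/H$ and the relevant quotient morphism is the same $\pi$. Since $\gr\C\simeq C\ot\wwG$ and $\gr\D\simeq D\ot\wwH$ as superalgebras (Section \ref{subsec:graded_Hopf}, Theorem \ref{thm:tensor_prod_decomp}), the odd cotangent spaces satisfy $\wedge^1\mathsf{W}_{\gr\G}=\wG$ and $\wedge^1\mathsf{W}_{\gr\H}=\wH$, whence $\z^{\gr\G}_{\gr\H}=\z$ is literally the same (cf. \eqref{eq:Z}); and $\O_{\gr\G}(\pi^{-1}(U))=\gr(\O_{\G}(\pi^{-1}(U)))$ has associated algebra $A$. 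Feeding these into Theorem \ref{mainthm} shows that $\gr\G/\gr\H$ is a superscheme over $|G/H|$ whose structure sheaf is locally $\wedge_B(A\, \square_D\, \z)$, that is, the same split sheaf $\wedge_{\O_{G/H}}(\pi_*\O_G\, \square_D\, \z)$ found above for $\gr(\G/\H)$.

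It then remains to see that the given morphism $\gr\G\to\gr(\G/\H)$ induces an isomorphism. Because $\gr$ preserves tensor products of superalgebras one has $\gr(\G\times\H)=\gr\G\times\gr\H$, so $\gr$ carries the coequalizer $\G\times\H\rightrightarrows\G\to\G/\H$ to a diagram exhibiting $\gr\G\to\gr(\G/\H)$ as coequalizing the $\gr\H$-action and the trivial one; by the universal property of $\gr\G/\gr\H$ (which exists by the previous paragraph) it factors through a morphism $\gr\G/\gr\H\to\gr(\G/\H)$, identical on the underlying space $|G/H|$. To check it is an isomorphism I would work locally over an affine $U$, where it is a graded-algebra endomorphism of $\wedge_B(A\, \square_D\, \z)$ coming from the natural comparison between $\gr$ of the $\D$-coinvariants of $\O_\G(\pi^{-1}(U))$ and the $\gr\D$-coinvariants of $\gr\O_\G(\pi^{-1}(U))$; tracing the constructions it is the identity on $B$ in degree zero and on $A\, \square_D\, \z$ in degree one, and a graded map between exterior algebras that is the identity in degrees zero and one is the identity. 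Hence the comparison morphism is a local, and therefore global, isomorphism, proving \eqref{eq:isom_gr}.

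The step I expect to be the main obstacle is precisely this last bookkeeping: confirming that Theorem \ref{mainthm} applied to the graded pair reproduces exactly the invariants $G/H$, $\z$ and $A$, and that the induced comparison map $\gr\big(\O_\G(\pi^{-1}(U))^{co\D}\big)\to\big(\gr\O_\G(\pi^{-1}(U))\big)^{co\gr\D}$ is genuinely the identity in degrees zero and one (equivalently, that forming $\gr$ commutes with taking $\D$-coinvariants in this situation), together with verifying that $\gr$ is functorial and product-preserving enough to transport the coequalizer defining $\G/\H$ to the one defining $\gr\G/\gr\H$.
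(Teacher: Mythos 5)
Your treatment of the first assertion is essentially the paper's own proof: both identify the degree-zero component of $\O_{\gr(\G/\H)}$ with $\O_{G/H}$ (Proposition \ref{prop:affinity2} (1)) and the degree-one component with $\pi_*\O_G\,\square_D\,\z$ (Remark \ref{rem:about_mainthm}), and then conclude by checking that the resulting morphism from the exterior-algebra sheaf is an isomorphism on all affine open subsets. No problem there.

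For the second assertion, however, there is a genuine gap, at exactly the step you yourself flag as ``the main obstacle''. Constructing the comparison morphism $\gr\G/\gr\H\to\gr(\G/\H)$ via functoriality of $\gr$ and the coequalizer property is fine (indeed it makes precise the phrase ``induced by the quotient morphism'', on which the paper is terse), but the claim that this morphism is ``the identity in degrees zero and one'' is not mere bookkeeping. The identification $\O_{\G/\H}(U)=\O_\G(\pi^{-1}(U))^{co\D}\simeq\wedge_B(A\,\square_D\,\z)$ is made through $\omega_\theta$ of \eqref{eq:omega}, hence depends on the choice of the retraction $\theta$ in \eqref{eq:theta} (equivalently of $\eta$ in \eqref{eq:eta}) and of $\psi'$, and similarly for the identification on the $\gr\G/\gr\H$ side; so ``tracing the constructions'' must in particular prove that the induced degree-one identifications are independent of these choices and natural in $U$, which is precisely the nontrivial content still missing from your argument. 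The paper closes exactly this point by a forward appeal to Proposition \ref{prop:split}, Case (c), applied to the pair $\gr\G\supset\gr\H$ (a legitimate forward reference, since the proof of that proposition is independent of Proposition \ref{prop:grG/H}): because $\gr\G$ is graded, one may take $\psi=\gr\psi$ canonically and the open embeddings require no retraction at all, so the isomorphisms $\O_{\gr\G/\gr\H}(U)\simeq\wedge_B(A\,\square_D\,\z)$ are natural in $U$; hence $\gr\G/\gr\H$ is split with the same structure sheaf \eqref{eq:splitting_module_sheaf} as $\gr(\G/\H)$, which yields \eqref{eq:isom_gr}. To complete your proof you should either invoke that proposition, or reproduce its Case-(c) computation (the canonical map $\C\to A\ot\wwG$, followed by taking $\wwH$-coinvariants and then $D$-coinvariants) inside your local verification.
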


\begin{proof}
Recall that the structure sheaf $\O_{\gr(\G/\H)}$ of $\gr(\G/\H)$ is a sheaf of graded superalgebras. 
The $0$-th component is $\O_{G/H}$ by Proposition \ref{prop:affinity2} (1).  
The first component coincides with $\pi_*\O_{G/H}\square_{D}\, \z$, since it does on
all affine open subsets of $|G/H|$; see Remark \ref{rem:about_mainthm}. Therefore, we have a natural
morphism $\wedge_{\O_{G/H}}(\pi_*\O_G\, \square_{D}\, \z)\to \O_{\gr(\G/\H)}$ of sheaves, which is identical in degree $0$, $1$; 
this is isomorphic since it is so on all affine open sets. 
The result just proven, combined with Proposition \ref{prop:split} bellow in Case (c) (applied to $\gr \G$), proves \eqref{eq:isom_gr}. 
\end{proof}

\begin{prop}\label{prop:split}
The superscheme $\G/\H$ is split with the structure sheaf as in \eqref{eq:splitting_module_sheaf},
either if
\begin{itemize}
\item[(a)] $\z$ is a $C$-subcomodule of $\wG$, 
\item[(b)] $\z$ is a $D$-comodule direct summand of $\wG$, or
\item[(c)] $\G$ is graded in the sense as defined by Definition \ref{lemdef:graded}.  
\end{itemize}
\end{prop}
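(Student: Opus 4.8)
The plan is to reduce, in all three cases, to producing a \emph{single} global choice of the retraction data underlying the local isomorphisms of Theorem~\ref{mainthm}. Recall from Remark~\ref{rem:about_mainthm} that on every affine open $U=\S B\subset G/H$ the isomorphism $\O_{\G/\H}(U)\os{\simeq}{\tto}\B=\wedge_B(A\,\square_D\,\z)$ is induced through $\omega_\theta$, and so depends only on the chosen retraction $\theta:A\ot\wG\to A\ot\z$ in $\SM_A^D$ (the isomorphism $\psi'$ being fixed once and for all, and $\iota:C\to A$ being the structure map). Remark~\ref{rem:compatible}~(2) already arranges these isomorphisms to be compatible for pairs $U\supset U'$; to obtain a \emph{global} isomorphism $\O_{\G/\H}\simeq\wedge_{\O_{G/H}}(\pi_*\O_G\,\square_D\,\z)$ it therefore suffices to exhibit $\theta$ as a morphism of sheaves $\pi_*\O_G\ot\wG\to\pi_*\O_G\ot\z$, which is $\O_{G/H}$-linear, $D$-colinear, and compatible with restriction. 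I will produce such a $\theta$ canonically in each case, thereby bypassing the $U$-dependent choice of $\eta$ in \eqref{eq:retraction}.

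Case (b) is immediate: since $\z$ is a $D$-comodule direct summand of $\wG$, there is a $U$-independent $D$-colinear retraction $r:\wG\to\z$, and $\theta=\op{id}_A\ot r$ is then $A$-linear and $D$-colinear for the tensor coaction on $A\ot\wG$, visibly a morphism of sheaves compatible with restriction. For case (a), where $\z$ is a $C$-subcomodule of $\wG$, I use the canonical isomorphism $\kappa=\kappa_A$ of \eqref{eq:kappa}, which is defined purely from $\iota:C\to A$ and hence is natural in $A$; by Remark~\ref{rem:C-costable} it carries $A\ot\z$ isomorphically onto $\z\ot A$. Composing $\kappa$ with $r\ot\op{id}$, for any fixed linear retraction $r:\wG\to\z$, and back with $(\kappa|_{A\ot\z})^{-1}$ yields a global $\theta$; equivalently $\pi_*\O_G\,\square_D\,\z\simeq\z\ot\O_{G/H}$ canonically, so no choice is needed at all. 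In both cases the global $\theta$ makes $\omega_\theta$ compatible with every restriction map, and the local isomorphisms glue.

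Case (c) I would treat separately and most concretely. First, $\G$ graded forces $\H$ graded: condition (iv) of Lemma-Definition~\ref{lemdef:graded} for $\H$ reads $[\Lie(\H)_1,\Lie(\H)_1]=0$, which holds because $\Lie(\H)_1\subseteq\Lie(\G)_1$ and $[\Lie(\G)_1,\Lie(\G)_1]=0$. Hence $\C\simeq C\rcosmash\wwG$ and $\D\simeq D\rcosmash\wwH$ as Hopf superalgebras, compatibly, with $\C\to\D$ induced by $C\to D$ and by the exterior-algebra surjection $\wwG\to\wwH$. With these identifications the construction needs no retraction: for affine $U$ one has canonically $\O_\G(\pi^{-1}(U))=A\ot\wwG$ with $A=\O_G(\pi^{-1}(U))$, and a direct Hopf-module computation of the $\D$-coinvariants gives $(A\ot\wwG)^{co\D}\simeq\wedge_B(A\,\square_D\,\z)$ canonically, using the identity $\wwG^{\,co\wwH}=\zz$ and the fact that the co-adjoint coupling inside the smash coproduct is exactly what turns the plain tensor into the co-tensor product $A\,\square_D\,\z$. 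This identification is compatible with the localization maps $A\to A'$, so the splitting is global. I note that this honestly-graded case, applied to $\gr\G$, is precisely what Proposition~\ref{prop:grG/H} invokes to deduce \eqref{eq:isom_gr}, so it must be---and here is---established without circular appeal to that proposition.

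The main obstacle is case (c). Cases (a) and (b) merely repackage constructions already available (the canonical $\kappa$ and a colinear splitting). Case (c) requires verifying that $\H$ inherits gradedness, that the two smash-coproduct presentations are compatible under $\C\to\D$, and above all that the coinvariant computation $(A\ot\wwG)^{co\D}\simeq\wedge_B(A\,\square_D\,\z)$ is canonical and natural in $A$. It is this last Hopf-algebraic identification, in which the co-adjoint action threads the even coordinate ring $A$ through the odd exterior generators to yield the co-tensor product, that carries the real content.
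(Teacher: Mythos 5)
Your proposal is correct and follows essentially the same route as the paper's own proof: in cases (a) and (b) you construct exactly the paper's $\eta$-independent retractions $\theta$ (via $\kappa$ and a linear retraction $r:\wG\to\z$, resp.\ via a $D$-colinear splitting) and glue the local isomorphisms through the compatibility of the maps $\omega_\theta$, while in case (c) you use the same presentations $\C = C\rcosmash\wwG$, $\D = D\rcosmash\wwH$ and the same two-stage coinvariant computation $(A\ot\wwG)^{co\wwH}=A\ot\zz$, then $\big(A\ot\zz\big)^{coD}=A\,\square_D\,\zz=\wedge_B(A\,\square_D\,\z)$, natural in $U$. Your remark that case (c) must be (and is) proved without appeal to Proposition \ref{prop:grG/H} also matches the paper's logical order.
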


To prove this we wish to show that
the sheaf \eqref{eq:splitting_module_sheaf} and  $\O_{\G/\H}$ are naturally isomorphic on all affine
open subsets of $|G/H|$, 
in Cases (a), (b) and in Case (c), separately. 

\begin{proof}[Proof in Cases (a), (b)]
In these cases we construct retractions $\theta : A \ot \wG \to A \ot \z$ as in \eqref{eq:theta},
which do not depend on the $\eta$ in \eqref{eq:eta}.  
In Case (a), choose a linear retraction $r : \wG \to \z$ of the inclusion $\z \hookrightarrow \wG$,
and define $\theta$ to be the composite
\[
A \ot \wG \os{\kappa^{-1}}{\tto} \wG \ot A \os{r\ot \op{id}}{\tto} 
\z \ot A \os{(\kappa|_{A \ot \z})^{-1}}{\tto} A \ot \z.
\]
This is indeed possible as is seen from Remark \ref{rem:C-costable}. 
In Case (b), choose a $D$-colinear retraction $s : \wG \to \z$, and let 
$\theta= \op{id}\ot s: A \ot \wG \to A \ot \z$. 

Using these $\theta$, define $\D$-super-comodule superalgebra maps 
$\omega_{\theta} : \C \to (A \ot \zz)\, \square_D\, \D$
as in \eqref{eq:omega}
for all $A= \O_G(\pi^{-1}(U))$, where $U \subset |G/H|$ are affine open. 
Then the maps are seen to be compatible in the same sense as in Remark \ref{rem:compatible} (2),
with respect to all pairs $U \supset U'$ in $|G/H|$. It follows that all superalgebra
maps $\O_{\G}(\pi^{-1}(U))^{co \D}\to \O_{\G}(\pi^{-1}(U'))^{co \D}$
restricted from the restriction maps $\O_{\G}(\pi^{-1}(U))\to \O_{\G}(\pi^{-1}(U'))$
may be identified with those maps $A\, \square_D\, \zz\to A'\, \square_D\, \zz$ which
arise from the restriction maps $A = \O_G(\pi^{-1}(U)) \to \O_G(\pi^{-1}(U'))=A'$. 
In view of \eqref{eq:B} this proves the desired result. 
\end{proof}

\begin{proof}[Proof in Case (c)]
Assume (c). Then $\C = C \rcosmash \wwG$. One sees from Lemma \ref{lemdef:graded} 
(see Condition (iv)) that $\H$ as well is graded, so that $\D = D \rcosmash \wwH$. 

Let $U \subset |G/H|$ be non-empty affine open.
Set $A=\O_G(\pi^{-1}(U))$, and let $\iota : C \to A$ be the (right $D$-comodule) algebra map 
associated with $G \supset \pi^{-1}(U)$. Compose the now canonical isomorphism
\[ \psi =\gr \psi : \C \os{\simeq}{\longrightarrow} C \ot \wwG \, (=C \rcosmash \wwG)  \]
as in \eqref{eq:psi} with $\iota \ot \op{id} : C \ot \wwG \to A \ot \wwG$. 
The resulting
$\C \to A \ot \wwG$ is seen to be a right $\D$-super-comodule superalgebra map. 
Moreover, it gives rise to a right $\H$-equivariant embedding $\S(A \ot \wwG) \to \G$ of superschemes
onto the open subset $\pi^{-1}(U)$ of $|\G|\, (=|G|)$; see the proof of
Proposition \ref{prop:open_embed}. Here, the $\D$-super-comodule structure on 
$A \ot \wwG$ is such that $\wwH$ co-acts naturally on the tensor factor $\wwG$, and $D$ co-acts
co-diagonally on the tensor product. Therefore, the $\wwH$-coinvariants in
$A \ot \wwG$ are given by
\[ (A \ot \wwG)^{co\wwH}= A \ot \wedge(\z). \]
This last is the tensor product of two right $D$-comodules. Its $D$-coinvariants 
coincide with the $\D$-coinvariants in the original $A \ot \wwG$, and are given by
\[ (A \ot \wedge(\z))^{coD}=A\, \square_D\, \zz. \]
We thus have $\O_{\G/\H}(U)=\O_{\G}(\pi^{-1}(U))^{co\D} \simeq A\, \square_D\, \zz=
\wedge_B(A\, \square_D\, \z)$; see \eqref{eq:B}.
Since the isomorphism is natural in $U$, the desired result follows. 
\end{proof}

\begin{rem}\label{rem:known_for_split}
(1)\ The super-Grassmanians $\op{Gr}(s|r, m|n)$,  super-analogues of Grassmanians, are presented in the form 
$\G/\H$ as in \cite[Section 6]{MZ1};  $\op{Gr}(s|r, m|n)$ is a smooth algebraic superscheme which 
has the product $\op{Gr}(s, m)\times \op{Gr}(r, n)$ of Grassmanians as its associated scheme. 
It was early proved by Manin \cite[Chapter 4, Section 3, 16.~Example, p.200]{Manin} that the super-Grassmanian $\op{Gr}(1|1, 2|2)$ 
over the field of complex numbers is not split.

(2)\ E. G. Vishnyakova \cite{V1}, \cite{V2} studies the splitting property of quotients $\G/\H$ in the
analytic situation for complex super Lie groups. 
Theorem 2 of \cite{V1} proves
our Proposition \ref{prop:split} in Case (c) in the analytic situation. 
Example 3 of \cite{V2} tells us that the super-Grassmanian $\op{Gr}(s|r, m|n)$, constructed as a complex super-manifold, is not split  
if and only if $0<s<m$ and $0<r<n$. 
The ``if" holds as well for our algebraic $\op{Gr}(s|r, m|n)$ (over the field of complex numbers), since one can prove the following:
(i)~The analytic $\op{Gr}(s|r, m|n)$ is the analytification of ours;
(ii)~If a smooth locally-algebraic superscheme is split, then its analytification is, as well.
\end{rem}


\subsection{Brundan's work}\label{subsec:Brundan} 
Let us look closely at Brundan's paper \cite{B} cited in the Introduction. 
Retain the notation as above; see the beginning of Section \ref{subsec:open_embed}. 
Brundan assumed six properties, (Q1)--(Q6), which $\G \supset \H$ is expected to have. 
The first three (Q1)--(Q3) assume essentially that there exists a quotient superscheme $\G/\H$ such that
it is Noetherian and the quotient morphism $\G \to \G/\H$ is affine and faithfully flat. (Q4) assumes
that the scheme associated with $\G/\H$ is the quotient scheme $G/H$ for the associated affine algebraic groups.
The present paper as well as \cite{MZ1} have proved that these are all true in general and, moreover, 
that $\G/\H$ represents the faisceau $\G\tilde{/}\H$.
(Q5) assumes essentially that
$\G/\H$ is locally split, that is, split, restricted to some open neighborhood of every point. 
We have first proved that this is true in general; see Remark \ref{rem:about_mainthm}. 
The last (Q6) assumes that scheme $G/H$ is
projective. But it does happen that $G/H$ does not satisfy the assumption, as is remarked in \cite{B}. 

Brundan \cite[Section 2]{B} proved some useful, general results on $\G/\H$, assuming (Q1)--(Q6), and applied them to the special algebraic
supergroup $\G=Q(n)$ and its parabolic sub-supergroups $\H=P_{\gamma}$, proving that they satisfy the assumptions.
He thereby obtained beautiful results on modular representations of $Q(n)$. 

The general results above now hold for $\G/\H$ in general, 
only assuming that $G/H$ is projective, in case the result requires (Q6). 
Therefore, they can apply to investigate representations of a wider class of affine algebraic supergroups. 

\bigskip

\emph{Note added in revision.}\ 
A referee kindly suggested to the authors to add the articles \cite{BCF} \cite{Kostant} and 
\cite{V} to the References, in which the quotient problem had been considered before for differential and complex-analytic 
supergroups. We added also our subsequent preprint \cite{HMT} joint with M. Hoshi, which
gives a new, Hopf-algebraic construction of the quotients $\G/\H$ in the analytic situation when the base field is a complete field
of characteristic $\ne 2$. In fact, there is used analogous argument of proving Theorem \ref{mainthm} above. 
In view of Remark \ref{rem:known_for_split} (2), the referee posed to the authors a question which essentially asks
whether the complex-analytification functor is compatible with constructing quotients $\X/\G$,
which generalize the supergroup quotients $\G/\H$ discussed by the present paper. Our answer is positive under some appropriate
assumptions that include freeness of the action; details will appear in a forthcoming paper. 


\section*{Acknowledgments}
The first-named author was supported by
JSPS Grant-in-Aid for Scientific Research (C) 17K05189. 
The authors thank Alexandr Zubkov for his helpful comments on an earlier version of
this paper. Main results were announced at the 51st Symposium on Ring Theory and Representation Theory
held on September 19--22, 2018 at Okayama University of Science, Okayama, Japan. 


\end{document}